\documentclass[11pt,leqno]{article}
\usepackage{a4wide}
\usepackage{amssymb,upgreek}
\usepackage{amsmath}
\usepackage{amsthm}
\usepackage{mathrsfs}
\usepackage{euscript,csquotes}




\usepackage[curve]{xypic}
\usepackage[usenames,dvipsnames]{xcolor}
\usepackage{endnotes,comment}




\setlength{\topmargin}{-1cm}

\setcounter{secnumdepth}{5}

\frenchspacing

\usepackage{hyperref}

\theoremstyle{plain}

\newcommand{\Hom}{\operatorname{Hom}}

\newcommand{\Ext}{\operatorname{Ext}}

\newcommand{\Mod}{\operatorname{Mod}}
\newcommand{\Ind}{\operatorname{Ind}}
\newcommand{\ind}{\operatorname{ind}}

\newcommand{\red}{\operatorname{red}}
\newcommand{\res}{\operatorname{res}}

\newcommand{\chara}{\operatorname{char}}
\newcommand{\iso}{\operatorname{\overset{\sim}{\longrightarrow}}}

\def\U{\EuScript{U}}

\def\F{\mathfrak{F}}
\def\O{\mathfrak{O}}
\def\k{\kappa}
\def\Q{\Bbb{Q}}
\def\N{\Bbb{N}}
\def\Z{\Bbb{Z}}

\def\v{\text{val}_{\mathfrak{F}}}
\def\k{\kappa}
\def\G{\bf{G}}

\def\A{\bf{A}}
\def\S{\bf{S}}
\def\Z{\bf{Z}}
\def\U{\mathcal{U}}

\newtheorem{theorem}{Theorem}[section]
\newtheorem{corollary}[theorem]{Corollary}
\newtheorem{lemma}[theorem]{Lemma}
\newtheorem{proposition}[theorem]{Proposition}

\newtheorem{definition}[theorem]{Definition}

\newtheorem{example}[theorem]{Example}

\theoremstyle{definition}
\newtheorem{remark}[theorem]{Remark}

\frenchspacing

\title{\textbf{Derived smooth induction with applications}}
\author{Peter Schneider, Claus Sorensen}
\date{}

\begin{document}

\maketitle

\begin{abstract}
In natural characteristic, smooth induction from an open subgroup does not always give an exact functor. In this article we initiate a study of the right derived functors, and we give applications to the non-existence of projective representations and duality.
\end{abstract}

\section{Introduction}

Let $G$ be a profinite group, and let $k$ be a field of characteristic $p$. The category of smooth $G$-representations on $k$-vector spaces $\Mod_k(G)$ has nonzero projective objects if and only if $p$ has finite exponent in the pro-order $|G|$. See \cite[Thm.~3.1]{CK23} for example (or \cite[Rk.~2.20, p.~20]{DK23} for a less precise result).
In this paper we study the question about non-existence of projectives for {\it{locally}} profinite groups. More precisely for $p$-adic Lie groups $G$. We approach the problem via the right derived functors of smooth induction $\Ind_K^G$ from a compact open subgroup $K$. This is the \emph{right} adjoint to the restriction functor and, in contrast to compact induction, the functor $\Ind_K^G$ is provably not exact in general for non-compact $G$.

As a sample result, suppose $G$ is a $p$-adic Lie group with a non-discrete center. We show in Proposition \ref{proj} that the category $\Mod_k(G)$ has no nonzero projective objects.

For general $p$-adic reductive groups (with no restriction on the center) we prove the following result, which has been a folklore expectation for some time:

\begin{theorem}\label{intromain}
Let $G={\bf{G}}(\F)$ for a nontrivial connected reductive group $\G$ defined over a finite extension $\F/\Bbb{Q}_p$. Then $\Mod_k(G)$ has no nonzero projective objects.
\end{theorem}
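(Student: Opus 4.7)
The plan is to induct on $\dim\mathbf{G}$, handling degenerate cases via results already at hand and using a parabolic reduction through the Jacquet functor in the inductive step.

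\emph{Reductions.} If the connected center $\mathbf{Z}(\mathbf{G})^{\circ}$ is nontrivial, then $G$ has non-discrete center and Proposition~\ref{proj} applies. If $\mathbf{G}$ is $\mathfrak{F}$-anisotropic, then $G$ is compact with $p$ of infinite exponent in the pro-order $|G|$, and the classical result cited in the introduction gives the conclusion. In particular the base case $\dim\mathbf{G}=1$ (so $\mathbf{G}\simeq\mathbb{G}_m$, a torus with non-discrete center) is handled.

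\emph{Inductive step via Jacquet.} Otherwise $\mathbf{G}$ is semisimple and $\mathfrak{F}$-isotropic with $\dim\mathbf{G}>1$. Fix a proper $\mathfrak{F}$-parabolic $\mathbf{P}=\mathbf{L}\mathbf{U}$, so $\mathbf{L}$ is connected reductive with $\dim\mathbf{L}<\dim\mathbf{G}$. Smooth parabolic induction $i_{\mathbf{P}}^{\mathbf{G}}:\Mod_k(L)\to\Mod_k(G)$ is exact because $P\backslash G$ is compact, and Frobenius reciprocity combined with the universal property of $U$-coinvariants identifies its left adjoint as the Jacquet functor $(-)_U:\Mod_k(G)\to\Mod_k(L)$. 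Since left adjoints of exact functors preserve projectives, whenever $Q\in\Mod_k(G)$ is projective, $Q_U\in\Mod_k(L)$ is projective as well. By the inductive hypothesis applied to $\mathbf{L}$, the category $\Mod_k(L)$ has no nonzero projectives, hence $Q_U=0$ for every unipotent radical $U$ of a proper $\mathfrak{F}$-parabolic of $\mathbf{G}$.

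\emph{Main obstacle: from Jacquet-vanishing to $Q=0$.} This last step is the crux, since Jacquet-vanishing alone leaves room for nonzero cuspidal $Q$. Here the derived smooth induction developed in the paper becomes essential. For a compact open pro-$p$ subgroup $K\subset G$, the Grothendieck spectral sequence for the composition $\Hom_G(Q,-)\circ\Ind_K^G$ collapses — because $\Hom_G(Q,-)$ is exact — to the identification
\[
\Ext^j_K(Q|_K,W)\;=\;\Hom_G(Q,R^j\Ind_K^G W)\qquad(j\geq0,\ W\in\Mod_k(K)).
\]
If the higher derived inductions $R^j\Ind_K^G W$ admit a filtration whose subquotients are parabolic inductions from proper Levi subgroups — the sort of description one expects from a Bruhat-type stratification of $K\backslash G$ — then the vanishing of all Jacquet modules of $Q$ forces $\Hom_G(Q,R^j\Ind_K^G W)=0$ for all $j\geq1$ and all $W$. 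Consequently $Q|_K$ is projective in $\Mod_k(K)$; but $K$ is a compact $p$-adic Lie group of positive dimension, so $\Mod_k(K)$ has no nonzero projectives. Hence $Q|_K=0$ and therefore $Q=0$. The difficulty is entirely concentrated in obtaining the required structural description of $R^{\bullet}\Ind_K^G$; once that input is in hand, the rest of the argument is formal.
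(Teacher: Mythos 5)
The proposal's framing is quite different from the paper's and, as you yourself flag, it hinges on an unproven structural hypothesis that constitutes a genuine gap, not a fillable detail. The paper's actual proof is short given its earlier results: since $\mathbf{G}$ is nontrivial and connected, the centralizer $Z$ of a maximal split torus has $\dim_{\mathbb{Q}_p}Z>0$, so $d = 2i_0 + \dim_{\mathbb{Q}_p}Z > i_0$; Theorem~\ref{main}(a) then gives $R^d\Ind_{K_m}^G(k)=0$, and plugging this into the spectral-sequence argument of Proposition~\ref{proj}(ii) (which only needs vanishing of $R^d\Ind_K^G(k)$ for \emph{some} compact open $K$, not the non-discrete-center hypothesis) yields the conclusion. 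No Jacquet functors, no parabolic stratification, no induction on $\dim\mathbf{G}$.

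Your inductive reductions are sound as far as they go (the anisotropic and non-discrete-center cases, the fact that $(-)_U$ is left adjoint to an exact functor and hence preserves projectives, the conclusion $Q_U=0$), but the step from Jacquet-vanishing to $Q=0$ relies on the claim that $R^j\Ind_K^G W$ for $j\geq 1$ carries a filtration whose graded pieces are parabolic inductions from proper Levis. This is not established anywhere, neither in the paper nor in the literature it cites, and the paper's own computations suggest it is false or at least inaccessible: Proposition~\ref{RInd} describes $R^j\Ind_K^G W$ as a colimit over $U$ of groupoid cohomology $\varprojlim_{x\in G/U}H^j(K\cap xUx^{-1},W)$, and the content of Theorem~\ref{main}(b) is that these groups are \emph{nonzero} for $0<j\leq i_0$ in a way controlled by the root-space decomposition of $K_m\cap zK_nz^{-1}$, not by a parabolic stratification of $K\backslash G$. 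The entire technical heart of the paper — the Iwahori factorization \eqref{f:Iwahori-fac-Km}, the uniformity of the intersections (Proposition~\ref{prop:intersec-uniform}), the exterior-algebra decomposition of their cohomology (Proposition~\ref{decompose}), and the analysis of restriction maps in Lemmas~\ref{resvan}--\ref{nonvan} — is precisely what replaces the filtration you are wishing for. Without a substitute for that machinery, your inductive step does not close, and the argument does not prove the theorem.

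A smaller remark: even if such a filtration existed, you would still need to justify that $\Hom_G(Q,-)$ kills each graded piece $i_P^G(M)$; by second adjunction this is $\Hom_L(r_{\bar P}^G Q, M)$ with the \emph{opposite} Jacquet module, so you would want vanishing of $Q_{\bar U}$ for opposite unipotent radicals as well — harmless in your setup since $\bar{\mathbf{U}}$ is also a unipotent radical of a proper parabolic, but worth stating.
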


We deduce \ref{intromain} from the vanishing of $R^d \Ind_K^G(k)$ for certain principal congruence subgroups $K$. More precisely, we fix a special vertex $x_0$ in the Bruhat-Tits building of $G$ and consider the associated group scheme ${\G}_{x_0}^\circ$ over $\O$ (the valuation ring in $\F$). The congruence subgroup
$$
K_m:=\ker\big({\G}_{x_0}^{\circ}(\O) \longrightarrow {\G}_{x_0}^{\circ}(\O/\pi^m\O)\big)
$$
is a uniform pro-$p$ group for $m \in e\N$ (with the extra assumption that $m>e$ if $p=2$). Here $\pi$ is a choice of uniformizer in $\F$, and $e$ denotes the ramification index of
$\F/\Bbb{Q}_p$.

The most technical part of our paper is finding the precise vanishing range for $R^i \Ind_{K_m}^G(k)$. This range is given by the number
$$
i_0:=\dim_{\Bbb{Q}_p}(G/P_{\text{min}})
$$
where $P_{\text{min}}$ denotes the group of $\F$-points of a minimal parabolic $\F$-subgroup of $\G$. We have:

\begin{theorem}\label{introrange}
$R^i \Ind_{K_m}^G(k)=0$ if and only if $i>i_0$.
\end{theorem}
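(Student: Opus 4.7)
The plan is to convert the question into continuous cohomology of the pro-$p$ group $K_m$ and then exploit the $p$-adic Lie structure to compute it. Write
\[
\Ind_{K_m}^G(V) \;=\; C^{\mathrm{sm}}(G,V)^{K_m},
\]
where $C^{\mathrm{sm}}(G,V)=\bigcup_H \mathrm{Maps}(G/H,V)$ is the filtered union over compact open subgroups $H\subseteq G$, with $K_m$ acting via the twisted rule $(k\ast f)(g)=k^{-1}\cdot f(kg)$. The functor $V\mapsto C^{\mathrm{sm}}(G,V)$ is exact; moreover, the Mackey formula
\[
\mathrm{Maps}(G/H,V)\;\cong\;\prod_{g\in K_m\backslash G/H}\Ind_{K_m\cap gHg^{-1}}^{K_m}(V)
\]
exhibits it as built from inductions from compact open subgroups of $K_m$ (which preserve injectives, being right adjoints to exact restrictions), arbitrary products (which always preserve injectives), and filtered colimits (which preserve injectives in the Noetherian category $\Mod_k(K_m)$, since $K_m$ is uniform pro-$p$). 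Hence $C^{\mathrm{sm}}(G,-)$ sends injectives to $K_m$-acyclics, and the Grothendieck spectral sequence for the composition $\Ind_{K_m}^G = (-)^{K_m}\circ C^{\mathrm{sm}}(G,-)$ degenerates to
\[
R^i\Ind_{K_m}^G(k)\;\cong\;H^i\bigl(K_m,\,C^{\mathrm{sm}}(G,k)\bigr).
\]

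Next I would unwrap this cohomology via Shapiro's lemma. Applied to each factor of the Mackey decomposition, Shapiro gives $H^i\bigl(K_m,\mathrm{Maps}(G/H,k)\bigr)=\prod_g H^i(K_m\cap gHg^{-1},k)$, and passing to the filtered colimit over $H$ (which commutes with continuous cohomology of the profinite group $K_m$ and with products of discrete coefficient modules) yields the explicit formula
\[
R^i\Ind_{K_m}^G(k)\;\cong\;\mathrm{colim}_H \prod_{g\in K_m\backslash G/H} H^i\bigl(K_m\cap gHg^{-1},\,k\bigr).
\]
Each intersection $L_{g,H}:=K_m\cap gHg^{-1}$ is, for $H$ sufficiently small, a uniform pro-$p$ subgroup of $K_m$ whose mod-$p$ cohomology is controlled by Lazard's theory from the $\mathbb{F}_p$-Lie algebra $\mathfrak{l}_{g,H}/p\mathfrak{l}_{g,H}$. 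The transition maps in the colimit combine refinement of double cosets with restriction in cohomology, and the key analytic input is that restriction $H^{\ast}(L,k)\to H^{\ast}(L',k)$ along a uniform pro-$p$ subgroup $L'\subseteq L$ of $p$-power index acts as multiplication by positive $p$-powers on the Lie-algebra generators; hence it vanishes in characteristic $p$ along precisely those directions of $\mathfrak{g}$ in which the lattice $\mathfrak{l}_{g,H}=\mathfrak{k}_m\cap\mathrm{Ad}(g)\mathfrak{h}$ is actually contracting as $H\to\{e\}$.

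The final, and hardest, step is the geometric matching of surviving directions. Using the Bruhat decomposition $G=\bigsqcup_w P_{\min}wP_{\min}$ and focusing on the big cell $U^-P_{\min}$, I would show that for deeply anti-dominant $g$ in the big cell, conjugation by $g$ expands $\mathrm{Ad}(g)\mathfrak{h}$ along $\mathfrak{u}^-$ and contracts it along $\mathfrak{p}_{\min}$; consequently, the only classes in $H^{\ast}(L_{g,H},k)$ that survive the colimit are those supported on the $\mathfrak{u}^-$-factor of the Iwahori factorization $\mathfrak{k}_m=\mathfrak{u}^-_m\oplus\mathfrak{t}_m\oplus\mathfrak{u}^+_m$ of the Lie algebra lattice. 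Since $\dim_{\mathbb{Q}_p}\mathfrak{u}^-=\dim_{\mathbb{Q}_p}(G/P_{\min})=i_0$, the surviving cohomology is nonzero in each degree $0\le i\le i_0$ (witnessed by explicit wedges of a $\mathbb{Z}_p$-basis of $\mathfrak{u}^-\cap\mathfrak{k}_m$) and vanishes for $i>i_0$, which is precisely the assertion of the theorem. The detailed bookkeeping in this final step -- controlling the double-coset refinements, ruling out stray contributions from smaller Bruhat strata, and verifying that the restriction maps kill exactly the $\mathfrak{p}_{\min}$-direction classes -- is the main technical obstacle, and is where the hypotheses that $x_0$ is a special vertex and $K_m$ is a principal congruence subgroup enter essentially, guaranteeing that $\mathfrak{k}_m$ respects the root-space decomposition of $\mathfrak{g}$ and that the Iwahori factorization is compatible with conjugation by the elements needed to probe the big cell.
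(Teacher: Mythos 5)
Your reduction to the explicit formula
\[
R^i\Ind_{K_m}^G(k)\;\cong\;\varinjlim_{H}\ \prod_{g\in K_m\backslash G/H} H^i\bigl(K_m\cap gHg^{-1},\,k\bigr)
\]
and your use of Lazard theory for uniform pro-$p$ groups (lower $p$-series controls vanishing of restriction, exterior algebra on the Frattini quotient gives the cohomology) are both sound and match the paper's Proposition \ref{RInd} and Section 5.2. But your ``final, and hardest, step'' — which you yourself flag as the main technical obstacle — is where the argument breaks, and it breaks because you have reached for the wrong decomposition. The Bruhat decomposition $G=\bigsqcup_w P_{\min}wP_{\min}$ stratifies $P_{\min}\backslash G/P_{\min}$; it tells you nothing useful about the double cosets $K_0\backslash G/K_0$, which is what actually indexes the family of intersections $K_m\cap gK_ng^{-1}$ you must control as $H$ shrinks. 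In particular, arguing about ``deeply anti-dominant $g$ in the big cell'' cannot establish the vanishing direction $i>i_0$, since vanishing requires killing the class over \emph{every} $g\in G$, not just generic ones; and stray contributions from smaller Bruhat strata are not ruled out by this framework.

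What the paper does instead, and what you should substitute, is the Cartan decomposition $G=K_0Z^+K_0$. Since $K_m$ and $K_n$ are normal in $K_0$, the map $x\mapsto hx$ ($h\in K_0$) reduces the analysis of every restriction map $\res^i_{n,n'}(g)$ to the case $g=z\in Z^+$. Then the Iwahori factorization of $K_m\cap zK_nz^{-1}$ behaves cleanly: for $\alpha\in\Phi^+$ and $z\in Z^+$ one has $z\tilde{U}_{\alpha,n}z^{-1}\subseteq\tilde{U}_{\alpha,n}$, so the positive-root and $Z_n$ factors shrink strictly into their own $p$-power subgroups as $n\to n+e$ and their cohomology dies under restriction (your Lazard observation); the negative-root factors do not shrink, and together they contribute exactly $\dim_{\mathbb{Q}_p}\sum_{\alpha\in\Phi^-_{\mathrm{red}}}U_\alpha = i_0$ degrees, giving the vanishing for $i>i_0$ (the paper's Lemma \ref{resvan}). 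For the non-vanishing you likewise need the Cartan decomposition: one prescribes a compatible cohomology class $c$ on a set of representatives $\mathcal{X}$ of $Z^+/Z^0$, supported on $K_m\mathcal{X}K_n$, built from wedge powers along the negative root directions, and shows that for each $n'$ a sufficiently anti-dominant $z\in\mathcal{X}$ makes $\res^i_{n,n'}(z)$ an isomorphism on those wedge spaces (Lemma \ref{nonvan}). The contraction/expansion heuristic you state is the right phenomenon, but it is the Cartan factor $z\in Z^+$, not a big-cell element $g\in U^-P_{\min}$, that makes it quantitative and exhaustive over all double cosets.
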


This answers a question in \cite{Sor} about the higher smooth duals of the compactly induced representation $\ind_{K_m}^G(k)$.

We introduce the functor $\underline{\Ind}$ by taking the union of $\Ind_K^G$ as $K$ varies. This takes a smooth $G$-representation to a smooth
$G \times G$-representation. At the derived level this gives a functor
$$
R\underline{\Ind}: D(G) \longrightarrow D(G\times G)
$$
where $D(G):=D(\Mod_k(G))$. In \cite{SS} we studied the smooth duality functor $R\underline{\Hom}(-,k)$ on this category. The complex $R\underline{\Ind}(k)$ in some sense represents
$R\underline{\Hom}(-,k)$ on the subcategory $D(G)^c$ of compact objects. The precise statement is the following: For all compact $V^\bullet$ there is an isomorphism
$$
\tau_{V^\bullet}: R\underline{\Hom}(V^\bullet,k) \overset{\sim}{\longrightarrow} R\underline{\Hom}_{\Mod_k(G_r)}(V^\bullet,R\underline{\Ind}(k))
$$
in $D(G)$. Here, if $V$ is a smooth $G$-representation and $W$ is a smooth $G\times G$-representation, we let $\underline{\Hom}_{\Mod_k(G_r)}(V,W)$ denote the space of $k$-linear maps $V \longrightarrow W$ which are $G$-equivariant on the right and smooth on the left (referring to the two $G$-factors in $G\times G$ acting on $W$).

When $G$ is a $p$-adic reductive group, as in Theorems \ref{intromain} and \ref{introrange}, we show that $R\underline{\Ind}(k)$ only has cohomology in degree zero:

\begin{theorem}\label{introunder}
Keep the group $G={\bf{G}}(\F)$ as in Theorem \ref{intromain}. There is an isomorphism
$$
\mathcal{C}^\infty(G,k)[0] \overset{\sim}{\longrightarrow} R\underline{\Ind}(k)
$$
in $D(G\times G)$. (Here $\mathcal{C}^\infty(G,k)[0]$ is the space of $k$-valued functions on $G$, which are smooth on both sides, viewed as a complex concentrated in degree zero.)
\end{theorem}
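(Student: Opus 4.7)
The plan is to reduce Theorem~\ref{introunder} to Theorem~\ref{introrange} by presenting $R\underline{\Ind}(k)$ as a filtered colimit. By definition, $\underline{\Ind}(V) = \bigcup_K \Ind_K^G(V|_K)$, and since this is a filtered union of left-exact functors and filtered colimits are exact in the Grothendieck category $\Mod_k(G\times G)$, one has $R^i\underline{\Ind}(k) = \operatorname{colim}_{m} R^i\Ind_{K_m}^G(k)$ for every $i \geq 0$. The degree-zero piece is handled directly: $\bigcup_K \Ind_K^G(k) = \bigcup_K \mathcal{C}^\infty(K\backslash G, k)$ is precisely the space of $k$-valued functions on $G$ smooth on both sides, equipped with the natural two-sided translation $G\times G$-action. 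Theorem~\ref{introrange} then immediately forces $R^i\underline{\Ind}(k) = 0$ for all $i > i_0$.

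The heart of the argument is to show the vanishing $\operatorname{colim}_{m} R^i\Ind_{K_m}^G(k) = 0$ also in the range $1 \leq i \leq i_0$. For $K_{m'} \subset K_m$, the transitivity $\Ind_{K_{m'}}^G = \Ind_{K_m}^G \circ \Ind_{K_{m'}}^{K_m}$, combined with the exactness of $\Ind_{K_{m'}}^{K_m}$ (finite index between compact groups), causes the Grothendieck spectral sequence to degenerate and yields a natural isomorphism $R^i\Ind_{K_{m'}}^G(k) \cong R^i\Ind_{K_m}^G(\mathcal{C}(K_{m'}\backslash K_m, k))$. Under this identification, the transition map $R^i\Ind_{K_m}^G(k) \to R^i\Ind_{K_{m'}}^G(k)$ is the image under $R^i\Ind_{K_m}^G$ of the inclusion of constants $k \hookrightarrow \mathcal{C}(K_{m'}\backslash K_m, k)$. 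I would then argue that every class in $R^i\Ind_{K_m}^G(k)$ with $i > 0$ becomes zero after passage to a sufficiently small $K_{m'}$. The driving principle is the familiar characteristic-$p$, pro-$p$ phenomenon that restriction maps $H^i(K_m, k) \to H^i(K_{m'}, k)$ vanish in the colimit: the induced map on Frattini quotients $K_{m'}/\Phi(K_{m'}) \to K_m/\Phi(K_m)$ is essentially multiplication by $p$ on $\Bbb{F}_p$-vector spaces, hence zero (as in the prototype $K_m = p^m\Bbb{Z}_p$, where restriction of the generator of $H^1$ to the next layer is identically zero).

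The principal obstacle is to transfer this cohomological vanishing rigorously from $H^i(K_m, k)$ to $R^i\Ind_{K_m}^G(k)$, since the latter cannot be written as $H^i$ of a single fixed $G$-representation --- a naive attempt in that direction would falsely conclude that $R^i\Ind_{K_m}^G(k) = 0$ already for each $m$, contradicting Theorem~\ref{introrange}. The plan is to work with an explicit cocycle-level model: apply $\Ind_{K_m}^G$ to an injective bar resolution of the trivial $K_m$-representation, obtaining a specific complex whose cohomology computes $R^i\Ind_{K_m}^G(k)$, and then analyze the transition maps directly at the level of these complexes as $m$ varies. The $p$-divisibility of cohomology classes in the pro-$p$ setting, coupled with the bounded cohomological range provided by Theorem~\ref{introrange}, should yield the desired class-by-class vanishing. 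Once this is accomplished, the $G\times G$-equivariance of the isomorphism $\mathcal{C}^\infty(G,k)[0] \overset{\sim}{\to} R\underline{\Ind}(k)$ follows automatically from the functoriality of the whole construction.
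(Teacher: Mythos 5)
You have the right overall architecture: rewrite $R^i\underline{\Ind}(k)$ as $\varinjlim_m R^i\Ind_{K_m}^G(k)$, identify the degree-zero piece with $\mathcal{C}^\infty(G,k)$, and then show the positive-degree cohomology dies in the colimit. You also correctly diagnose the dangerous wrong turn (concluding vanishing of each $R^i\Ind_{K_m}^G(k)$ separately, which contradicts Theorem~\ref{introrange}). But the part you call ``the heart of the argument'' is not actually carried out: you explicitly flag ``the principal obstacle'' of transferring the vanishing from $H^i(K_m,k)\to H^i(K_{m'},k)$ to the transition maps on $R^i\Ind_{K_m}^G(k)$, and the plan you sketch (bar resolutions, cocycle-level analysis, appeal to $p$-divisibility plus the bound from Theorem~\ref{introrange}) is too vague to close that gap. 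Your ``driving principle'' concerns only the single group $K_m$ and its subgroup $K_{m'}$, i.e.\ the $g=1$ component of the Mackey decomposition; by Proposition~\ref{RInd}, $R^i\Ind_{K_m}^G(k)$ is built from $H^i(K_m\cap gK_ng^{-1},k)$ for \emph{all} $g\in G$, and the real difficulty is to get uniform control over these conjugate intersections as $g$ ranges over $G$.

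The missing ingredient is a structural result about the congruence subgroups. Passing to the diagonal colimit in the double limit coming from Lemma~\ref{lemma:Ind=lim} and Proposition~\ref{RInd}, one has
$R^i\underline{\Ind}(k) \simeq \varinjlim_{n\in e\N} \varprojlim_{x\in G/K_n} H^i(K_n \cap xK_nx^{-1}, k)$,
with transition maps given by restriction. The key point is Lemma~\ref{lem:ppower}: for every $g\in G$,
$(K_n \cap gK_ng^{-1})^p = K_{n+e}\cap gK_{n+e}g^{-1}$.
This is proved by reducing to $g=z\in Z^+$ via the Cartan decomposition, invoking the Iwahori factorization of $K_n\cap zK_nz^{-1}$, using uniformity factor-by-factor, and tracking how $z$-conjugation distorts $\tilde U_{\alpha,n}$ differently for $\alpha\in\Phi^-$ (no distortion) versus $\alpha\in\Phi^+$ (shrinkage). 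Combined with the standard fact (Lemma~\ref{lem:res}) that for uniform pro-$p$ groups with $U'\subset U^p$ the restriction $H^i(U,k)\to H^i(U',k)$ vanishes for $i>0$, one sees the transition map at every step $n\to n+e$ is identically zero in all positive degrees. This gives $R^i\underline{\Ind}(k)=0$ for \emph{all} $i>0$ directly, without any use of Theorem~\ref{introrange}. Your plan, as written, does not engage with this uniform-in-$g$ structure and hence does not establish the theorem.
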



\section{The derived functors of smooth induction}

For now $G$ denotes an arbitrary locally profinite group, and $k$ is any field. Let $\Mod_k$ and $\Mod_k(G)$ be the category of $k$-vector spaces and of smooth $G$-representations on $k$-vector spaces, respectively. Fix a compact open subgroup $K\subseteq G$ and consider the restriction functor $\res_K^G: \Mod_k(G) \longrightarrow \Mod_k(K)$. The compact induction functor $\ind_K^G$ is an exact left adjoint.
The full smooth induction functor $\Ind_K^G$ is a right adjoint of $\res_K^G$, but it is not exact in general when the characteristic of $k$ divides the pro-order of $G$. The purpose of this paper is to understand the derived functors $R^i\Ind_K^G$ better in that case.

Starting with an object $V$ from $\Mod_k(K)$ we will follow the convention in \cite[Ch.~I, Sect.~5]{Vig96} and realize $\Ind_K^G(V)$ as the space of all smooth functions
$f:G \longrightarrow V$ satisfying the transformation property $f(\k x)=\k f(x)$ for $\k \in K$. Thus in this article our convention is that $G$ acts by right translations.

\begin{definition}
For an open subgroup $U \subset G$ the $K$-action on $G/U$ gives rise to the following:
\begin{itemize}
\item[i.] Let $K\curvearrowright G/U$ denote the groupoid with objects the elements $x \in G/U$ and morphisms $\Hom(x,y)=\{\k \in K: \k x=y\}$ for $x,y \in G/U$;
\item[ii.] The representation $V$ gives a functor $F_V: K\curvearrowright G/U \longrightarrow \Mod_k$ sending $x \mapsto V^{K \cap xUx^{-1}}$, and if $\k x=y$ the $k$-linear map associated with $\k$ is
\begin{align*}
F_V(\kappa): V^{K \cap xUx^{-1}} & \iso V^{K \cap yUy^{-1}} \\
v &\longmapsto \kappa v.
\end{align*}
\end{itemize}
\end{definition}

We can think of the $U$-invariants $\Ind_K^G(V)^U$ as the limit of $F_V$.

\begin{lemma}\label{Ind}
$\Ind_K^G(V)^U \simeq \varprojlim_{x\in G/U} V^{K \cap xUx^{-1}}$.
\end{lemma}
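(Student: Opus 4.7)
The plan is to exhibit the isomorphism by evaluating $f$ at coset representatives. I would define
\[
\phi \colon \Ind_K^G(V)^U \longrightarrow \varprojlim_{x\in G/U} V^{K \cap xUx^{-1}}, \qquad f \longmapsto (f(x))_{x \in G/U}.
\]
Since $G$ acts by right translation, $U$-invariance of $f$ reads $f(gu)=f(g)$ for all $u \in U$, so $f(x)$ really only depends on the coset $xU$ and the assignment is well defined as a function on $G/U$.

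The next step is to verify that the image lands in the groupoid limit. For $x \in G/U$ and $\kappa = xux^{-1} \in K \cap xUx^{-1}$, combining the transformation rule with $U$-invariance gives
\[
\kappa f(x) = f(\kappa x) = f(xu) = f(x),
\]
so $f(x) \in V^{K \cap xUx^{-1}}$. The same transformation rule shows that whenever $\kappa x = y$ in $G/U$ we have $F_V(\kappa)(f(x)) = \kappa f(x) = f(\kappa x) = f(y)$, which is exactly the compatibility across the morphisms of the groupoid.

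For the inverse, given a compatible family $(v_x)_{x \in G/U}$ in the limit, I would set $f \colon G \to V$ by $f(g) := v_{gU}$. By construction $f$ is right-$U$-invariant, hence smooth as $U$ is open. The transformation property $f(\kappa g) = \kappa f(g)$ is exactly the compatibility at the morphism $\kappa \colon gU \to \kappa g U$ applied to the family $(v_x)$. The two constructions are then tautologically inverse to each other, proving the asserted isomorphism.

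There is no real obstacle here: the lemma is essentially an unpacking of definitions. The one step that requires a moment's care is the verification that $f(x) \in V^{K \cap xUx^{-1}}$, because it simultaneously uses the left $K$-transformation rule and the right $U$-invariance; once that is noticed everything else is formal.
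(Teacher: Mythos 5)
Your proof is correct and follows essentially the same route as the paper's: identify $U$-invariant functions with $K$-equivariant functions on $G/U$, observe that the transformation rule combined with $U$-invariance forces $f(x) \in V^{K \cap xUx^{-1}}$ and yields the groupoid compatibility, and construct the inverse from a compatible family. You merely spell out the verification that the paper leaves implicit.
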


\begin{proof}
The space $\Ind_K^G(V)^U$ consists of all $K$-equivariant functions $f:G/U \longrightarrow V$. For such an $f$, as $x \in G/U$ varies the vectors
$f(x) \in V^{K \cap xUx^{-1}}$ are compatible via the isomorphisms $F_V(\kappa)$. Vice versa, a compatible tuple of vectors arise from a unique $U$-invariant function.
\end{proof}

\begin{remark}\label{represent}
The point of this categorical description of $\Ind_K^G(V)^U$ is to avoid having to pick double coset representatives $R$ for
$K \backslash G/U$. With such a choice $R$ one can of course describe the $U$-invariants in simpler terms as just a product $\prod_{x\in R} V^{K \cap xUx^{-1}}$. However, as $U$ varies the transition maps become more cumbersome to work with.
\end{remark}

There is a formula for $R^i\Ind_K^G(V)$ of the same nature.

\begin{proposition}\label{RInd}
$R^i\Ind_K^G(V) \simeq \varinjlim_U \varprojlim_{x\in G/U} H^i(K \cap xUx^{-1}, V)$.
\end{proposition}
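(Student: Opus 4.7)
The plan is to choose an injective resolution $V \to I^\bullet$ in $\Mod_k(K)$, apply $\Ind_K^G$ termwise, and compute cohomology, so by definition $R^i\Ind_K^G(V) = H^i(\Ind_K^G(I^\bullet))$. Since $R^i\Ind_K^G(V)$ is smooth, and since filtered colimits in $\Mod_k$ are exact and therefore commute with the cohomology of a complex, I would write
$$
R^i \Ind_K^G(V) \;=\; H^i\bigl(\Ind_K^G(I^\bullet)\bigr) \;=\; \varinjlim_U H^i\bigl(\Ind_K^G(I^\bullet)^U\bigr),
$$
the colimit running over open subgroups $U \subseteq G$. The task then reduces to identifying $H^i(\Ind_K^G(I^\bullet)^U)$ with $\varprojlim_{x \in G/U} H^i(K \cap xUx^{-1}, V)$ for each fixed $U$.

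For each such $U$, Lemma \ref{Ind} gives $\Ind_K^G(I^j)^U = \varprojlim_{x\in G/U} (I^j)^{K\cap xUx^{-1}}$, and, as in Remark \ref{represent}, choosing representatives $R_U$ for $K \backslash G/U$ turns this inverse limit into a bare product $\prod_{x\in R_U} (I^j)^{K\cap xUx^{-1}}$. Since products of $k$-vector spaces are exact, they commute with $H^i$, so
$$
H^i\bigl(\Ind_K^G(I^\bullet)^U\bigr) \;\simeq\; \prod_{x\in R_U} H^i\bigl((I^\bullet)^{K\cap xUx^{-1}}\bigr).
$$
To identify the factor at $x$ with $H^i(K\cap xUx^{-1},V)$ I need the restriction $I^\bullet|_{K\cap xUx^{-1}}$ to remain an injective resolution of $V$: as a restriction of an exact sequence it is certainly still a resolution, and each $I^j$ stays injective because the restriction functor from $\Mod_k(K)$ to $\Mod_k(K\cap xUx^{-1})$ is the right adjoint of the exact compact induction $\ind_{K\cap xUx^{-1}}^K$. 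Reinterpreting the product as the groupoid limit $\varprojlim_{x\in G/U}$ via the functor $F_V$ of the definition then yields the stated formula.

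The main subtlety is that $(-)^U$ is not exact on $\Mod_k(G)$, so one cannot expect $(R^i\Ind_K^G(V))^U$ to equal $H^i(\Ind_K^G(I^\bullet)^U)$ at any fixed level $U$. The argument sidesteps this by never computing $U$-invariants of $R^i\Ind_K^G(V)$ directly; instead, smoothness together with exactness of filtered colimits pushes the cohomology through $\varinjlim_U$, so the correct answer reassembles in the direct limit over $U$ even though each finite-level piece is wrong.
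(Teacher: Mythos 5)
Your proof is correct and follows the same route as the paper's: take an injective resolution in $\Mod_k(K)$, note that restriction preserves injectives since compact induction is an exact left adjoint, identify $U$-invariants termwise via Lemma~\ref{Ind} as a product over $K\backslash G/U$, and then commute $H^i$ past the filtered colimit (AB5) and the product (AB4*). The closing remark about why each individual level $H^i\bigl(\Ind_K^G(I^\bullet)^U\bigr)$ need not compute $(R^i\Ind_K^G V)^U$ is a useful elaboration of a point the paper leaves implicit, but the underlying argument is the same.
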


\begin{proof}
Let $V \longrightarrow J^{\bullet}$ be an injective resolution of $V$ in $\Mod_k(K)$. This remains injective upon restriction to an open subgroup since compact induction is exact.
Therefore we have
\begin{align*}
R^i\Ind_K^G(V) &\simeq h^i(\Ind_K^G(J^{\bullet})) \\
&\simeq {\varinjlim}_U {\varprojlim}_{x\in G/U} h^i((J^{\bullet})^{K \cap xUx^{-1}}) \\
&\simeq  {\varinjlim}_U {\varprojlim}_{x\in G/U} H^i(K \cap xUx^{-1},V).
\end{align*}
In the second isomorphism we moved $h^i$ inside ${\varinjlim}_U$ and ${\varprojlim}_{x\in G/U}$, which is justified by the fact that $\Mod_k$ satisfies
AB5 and AB4* (filtered colimits and products are exact). Recall from Remark \ref{represent} that ${\varprojlim}_{x\in G/U}$ can be identified with a product $\prod_{x\in R}$.
\end{proof}

\begin{remark}\label{trans}
For a fixed $U$ the limit $\varprojlim_{x\in G/U} H^i(K \cap xUx^{-1}, V)$ coincides with the groupoid cohomology of $K\curvearrowright G/U$ as described in \cite[Df.~6]{Ron} for example. It is the limit of the functor $F_V^i$ sending $x \mapsto H^i(K \cap xUx^{-1}, V)$. Concretely, an element of this limit is a function
$$
c: G/U \longrightarrow {\bigoplus}_{x\in G/U} H^i(K \cap xUx^{-1}, V)
$$
with the following properties:
\begin{itemize}
\item[i.] $c_x:=c(x) \in H^i(K \cap xUx^{-1}, V)$ for all $x \in G/U$;
\item[ii.] If $\k x=y$ then $c_x \mapsto c_y$ via the isomorphism
$$
\k_*=F_V^i(\k): H^i(K \cap xUx^{-1},V) \iso H^i(K \cap yUy^{-1},V).
$$
\end{itemize}
With this description we can make the transition maps in the colimit $\varinjlim_U$ explicit. Let $U' \subset U$ be an open subgroup. Then the transition map is
\begin{align*}
t_{U,U'}^i: {\varprojlim}_{x\in G/U} H^i(K \cap xUx^{-1}, V)  & \longrightarrow  {\varprojlim}_{x'\in G/U'} H^i(K \cap x'U'x'^{-1}, V) \\
c & \longmapsto \big(\res_{K \cap x'U'x'^{-1}}^{K \cap x'Ux'^{-1}}c_{x'U}\big)_{x'\in G/U'}.
\end{align*}
This above formula for $t_{U,U'}^i$ will play a crucial role throughout this paper.
\end{remark}


\section{The connection to higher smooth duality}

To motivate the ensuing discussion we establish a relation between the functors $R^{i}\Ind_K^G$ and the higher smooth duality functors introduced in \cite{Koh}, and recast in \cite{SS}.

\begin{lemma}\label{Ind-ind}
For $V$ in $\Mod_k(K)$ and $W$ in $\Mod_k(G)$ there are functorial isomorphisms
$$
\Ind_K^G \underline{\Hom}(V,W|_K) \simeq  \underline{\Hom}(\ind_K^G V,W).
$$
(Here $\underline{\Hom}$ denotes the smooth $k$-linear maps, as defined in \cite[Sect.~1]{SS} for example.)
\end{lemma}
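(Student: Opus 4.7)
The plan is to construct explicit mutually inverse $G$-equivariant isomorphisms, as an internal (i.e.\ $\underline{\Hom}$-level) enhancement of Frobenius reciprocity $\Hom_K(V, W|_K) \simeq \Hom_G(\ind_K^G V, W)$. I first describe $\ind_K^G V$ via the elementary sections $[g, v]$, namely the function supported on the right coset $Kg$ with value $v$ at $g$. These generate $\ind_K^G V$ and satisfy the sole relation $[\kappa g, v] = [g, \kappa^{-1} v]$ for $\kappa \in K$; under right translation one has $g' \cdot [g, v] = [g(g')^{-1}, v]$.

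I then write down the candidate map
$$
\beta : \Ind_K^G \underline{\Hom}(V, W|_K) \longrightarrow \underline{\Hom}(\ind_K^G V, W), \qquad \beta(F)([g, v]) := g^{-1} \cdot F(g)(v),
$$
extended $k$-linearly, and its proposed inverse $\alpha(\phi)(g)(v) := g \cdot \phi([g, v])$. The twist by $g^{-1}$ in $\beta$ is dictated by two bookkeeping identities: the defining property $F(\kappa g) = \kappa \cdot F(g)$ combined with the conjugation $K$-action $(\kappa \psi)(v) = \kappa \psi(\kappa^{-1} v)$ on $\underline{\Hom}(V, W|_K)$ yields $(\kappa g)^{-1} F(\kappa g)(v) = g^{-1} F(g)(\kappa^{-1} v)$, which is exactly what the relation $[\kappa g, v] = [g, \kappa^{-1} v]$ requires for $\beta$ to descend. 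The analogous one-line computation shows $\alpha(\phi)(\kappa g) = \kappa \cdot \alpha(\phi)(g)$ in $\underline{\Hom}(V, W|_K)$, so $\alpha(\phi)$ really lies in $\Ind_K^G$. Smoothness of $\alpha(\phi)$ is inherited from the smoothness of $\phi$ (any open subgroup fixing $\phi$ under conjugation fixes each value $\alpha(\phi)(g)$), and smoothness of $\beta(F)$ follows from the local constancy of $F$ together with the fact that any element of $\ind_K^G V$ involves only finitely many cosets. The identities $\alpha\beta = \id$ and $\beta\alpha = \id$ drop out on the generators $[g, v]$.

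Finally I would verify $G$-equivariance with the right-translation action $(g\cdot F)(x) = F(xg)$ on the left-hand side and the conjugation action $(g\phi)(f) = g\phi(g^{-1} f)$ on the right: using $g^{-1} \cdot [x, v] = [xg, v]$, both $\alpha(g\phi)(x)(v)$ and $(g \cdot \alpha(\phi))(x)(v)$ simplify to $xg \cdot \phi([xg, v])$. Functoriality in $V$ and $W$ is evident from the formulas. The only real obstacle I anticipate is organizational: juggling three conventions simultaneously — the left-$K$-transformation rule defining $\Ind_K^G$, the conjugation $K$-action on its target, and the right-translation $G$-action — which is what fixes the precise placement of the $g^{\pm 1}$ twists in $\alpha$ and $\beta$.
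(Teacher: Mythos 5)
Your proof is correct, but it takes a genuinely different route from the paper. The paper argues categorically: for a test object $X$ in $\Mod_k(G)$ it strings together natural isomorphisms
$$
\Hom_G(X,\Ind_K^G \underline{\Hom}(V,W|_K)) \simeq \Hom_K(X|_K, \underline{\Hom}(V,W|_K)) \simeq \Hom_K(X|_K \otimes V, W|_K)
\simeq \Hom_G(\ind_K^G(X|_K \otimes V), W)
$$
then uses the projection formula $\ind_K^G(X|_K \otimes V) \simeq X \otimes \ind_K^G V$ from Vign\'eras (p.~40) and the tensor-hom adjunction to land on $\Hom_G(X, \underline{\Hom}(\ind_K^G V, W))$, and finally invokes Yoneda. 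Your proof instead writes down explicit mutually inverse $G$-equivariant maps $\alpha$ and $\beta$, with the necessary $g^{\pm1}$-twists dictated by the three competing conventions you name. I checked the formulas: $\beta(F)$ descends through the relation $[\kappa g, v]=[g,\kappa^{-1}v]$ precisely as you compute; $\alpha(\phi)(g)$ lands in $\underline{\Hom}(V,W|_K)$ because it is fixed by $K \cap g U' g^{-1}$ whenever $U'$ fixes $\phi$; $\alpha\beta=\id$, $\beta\alpha=\id$ are immediate on generators; and the $G$-equivariance identity reduces on both sides to $g^{-1}F(gg')(v)$. The paper's approach is shorter, makes functoriality automatic, and isolates the one nontrivial input (the projection formula); yours is more elementary and produces concrete formulas that could be useful for downstream computations, at the cost of the careful convention-chasing you acknowledge. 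One small point worth tightening: you say smoothness of $\beta(F)$ uses that elements of $\ind_K^G V$ involve finitely many cosets, but what you actually need (and what the right computation shows) is that a \emph{single} compact open $U$ right-stabilizing $F$ fixes $\beta(F)$ under conjugation uniformly on all generators $[g,v]$; the finite-support observation is a red herring.
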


\begin{proof}
For any representation $X$ in $\Mod_k(G)$ we have functorial isomorphisms
\begin{align*}
\Hom_{\Mod_k(G)}(X,\Ind_K^G \underline{\Hom}(V,W|_K)) &\simeq \Hom_{\Mod_k(K)}(X|_K, \underline{\Hom}(V,W|_K)) \\
&\simeq \Hom_{\Mod_k(K)}(X|_K \otimes_k V, W|_K) \\
&\simeq  \Hom_{\Mod_k(G)}(\ind_K^G(X|_K \otimes_k V), W) \\
&\simeq \Hom_{\Mod_k(G)}(X \otimes_k \ind_K^G V, W) \\
&\simeq \Hom_{\Mod_k(G)}(X, \underline{\Hom}(\ind_K^G V,W)).
\end{align*}
The fourth isomorphism follows from \cite[p.~40]{Vig96}; part d) just prior to Section 5.3. The others use standard adjunction properties.
The claim then follows from the Yoneda lemma.
\end{proof}

This gives the following spectral sequence (with $V$ and $W$ as above).

\begin{proposition}\label{spec}
$E_2^{i,j}=R^i\Ind_K^G \underline{\Ext}^j(V,W|_K) \Longrightarrow \underline{\Ext}^{i+j}(\ind_K^G V,W)$.
\end{proposition}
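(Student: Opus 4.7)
The plan is to obtain the claim as an instance of the Grothendieck spectral sequence for the composition of two left-exact functors. Write $F:=\underline{\Hom}(V,-|_K):\Mod_k(G)\longrightarrow\Mod_k(K)$ and $H:=\Ind_K^G:\Mod_k(K)\longrightarrow\Mod_k(G)$. Lemma \ref{Ind-ind} provides a natural isomorphism of functors
$$
H\circ F\;\simeq\;\underline{\Hom}(\ind_K^G V,-)
$$
on $\Mod_k(G)$. Both $F$ and $H$ are left exact, so once we know $F$ sends injective objects in $\Mod_k(G)$ to $H$-acyclic objects, the Grothendieck spectral sequence delivers
$$
E_2^{i,j}=R^iH\circ R^jF\;\Longrightarrow\;R^{i+j}(H\circ F),
$$
and it only remains to identify the three derived functors with the ones appearing in the statement.

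First I would verify the acyclicity hypothesis by showing the stronger assertion that $F$ preserves injectives. The restriction $-|_K$ is right adjoint to the exact functor $\ind_K^G$, hence sends injectives of $\Mod_k(G)$ to injectives of $\Mod_k(K)$. On $\Mod_k(K)$ the functor $\underline{\Hom}(V,-)$ is right adjoint to $-\otimes_k V$ (with the diagonal $K$-action), and the latter is exact since tensoring over the field $k$ is exact; hence $\underline{\Hom}(V,-)$ preserves injectives as well. Composing the two observations shows that $F$ carries injectives to injectives, in particular to $H$-acyclic objects.

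Next I would identify the derived functors. By definition $R^iH=R^i\Ind_K^G$. Because $-|_K$ is exact, $F$ factors as an exact functor followed by $\underline{\Hom}(V,-)$, so
$$
R^jF(W)\;=\;R^j\underline{\Hom}(V,-)(W|_K)\;=\;\underline{\Ext}^j(V,W|_K).
$$
Finally, since $\ind_K^G V$ lives in $\Mod_k(G)$, the derived functor of $\underline{\Hom}(\ind_K^G V,-)$ is $\underline{\Ext}^{i+j}(\ind_K^G V,-)$. Substituting these identifications into the Grothendieck spectral sequence yields exactly the sequence claimed.

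The only non-routine point in this plan is the preservation of injectives by $F$, and even that reduces to two applications of the standard fact that a right adjoint of an exact functor preserves injectives; no cohomological input beyond Lemma \ref{Ind-ind} is needed. I therefore expect no substantial obstacle.
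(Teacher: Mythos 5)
Your proof is correct and follows exactly the same strategy as the paper's: apply the Grothendieck spectral sequence to the composition $\Ind_K^G\circ\underline{\Hom}(V,(-)|_K)$, using Lemma \ref{Ind-ind} to identify the composite functor, and verifying that $\underline{\Hom}(V,(-)|_K)$ preserves injectives because both $(-)|_K$ and $\underline{\Hom}(V,-)$ are right adjoints of exact functors. The paper's proof is just a compressed version of your argument.
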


\begin{proof}
Note that the functor $\underline{\Hom}(V,-)$ preserves injective objects since $(-)\otimes_kV$ is an exact left adjoint. So does $(-)|_K$ as observed in the proof of Proposition \ref{RInd}.
The Grothendieck spectral sequence for $\Ind_K^G$ composed with $\underline{\Hom}(V,(-)|_K)$ takes the stated form by \ref{Ind-ind}.
\end{proof}

We emphasize the special case $W=k$ below.

\begin{corollary}\label{degen}
Suppose $V$ is a finite-dimensional object of $\Mod_k(K)$ and let $V^*=\Hom_k(V,k)$ denote its contragredient. Then there is an isomorphism of $G$-representations
$$
R^i\Ind_K^G (V^*) \simeq \underline{\Ext}^i(\ind_K^G V,k).
$$
\end{corollary}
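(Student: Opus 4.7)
The plan is to deduce this directly from the spectral sequence of Proposition \ref{spec} specialized to $W=k$, by showing that the spectral sequence degenerates on the $j=0$ row under the finite-dimensionality hypothesis on $V$.

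First I would observe that for a finite-dimensional smooth $K$-representation $V$, the internal Hom simplifies drastically: because $V$ is finite-dimensional and smooth, one checks that every $k$-linear map $V \longrightarrow W$ is automatically smooth for the diagonal $K$-action (its stabilizer contains the intersection of the open stabilizers of the images of a basis of $V$ together with an open subgroup fixing $V$ pointwise). Hence $\underline{\Hom}(V,W)=\Hom_k(V,W)\simeq V^*\otimes_k W$ as smooth $K$-representations, functorially in $W$. In particular the functor $\underline{\Hom}(V,-)$ on $\Mod_k(K)$ is naturally isomorphic to $V^*\otimes_k(-)$, which is exact. Consequently
$$
\underline{\Ext}^j(V,W)=0 \quad \text{for all } j>0,
$$
and $\underline{\Hom}(V,k|_K)=V^*$.

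Next I would apply Proposition \ref{spec} with $W=k$. The spectral sequence
$$
E_2^{i,j}=R^i\Ind_K^G\,\underline{\Ext}^j(V,k|_K)\Longrightarrow \underline{\Ext}^{i+j}(\ind_K^G V,k)
$$
then has $E_2^{i,j}=0$ for all $j>0$ by the vanishing just observed, while its bottom row is
$$
E_2^{i,0}=R^i\Ind_K^G(V^*).
$$
A spectral sequence concentrated on a single row degenerates at $E_2$, yielding the asserted isomorphism
$$
R^i\Ind_K^G(V^*)\overset{\sim}{\longrightarrow}\underline{\Ext}^i(\ind_K^G V,k)
$$
of $G$-representations.

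There is no real obstacle here; the only point that requires a little care is the verification that $\underline{\Hom}(V,-)$ coincides with $V^*\otimes_k(-)$ when $V$ is finite-dimensional and smooth, i.e.\ that smoothness is automatic in that setting. Once this is granted, the spectral sequence of Proposition \ref{spec} collapses and the result is immediate.
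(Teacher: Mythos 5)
Your proof is correct and follows essentially the same route as the paper: specialize the Grothendieck spectral sequence of Proposition \ref{spec} to $W=k$, observe that finite-dimensionality of $V$ forces $\underline{\Hom}(V,-)=\Hom_k(V,-)$ (hence exactness of this functor and vanishing of $\underline{\Ext}^j(V,k)$ for $j>0$), and conclude by degeneration along the bottom row. Your added verification that smoothness of linear maps out of a finite-dimensional smooth $V$ is automatic is precisely the point the paper leaves implicit.
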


\begin{proof}
When $W=k$ the spectral sequence in
Proposition \ref{spec} becomes
$$
E_2^{i,j}=R^i\Ind_K^G \underline{\Ext}^j(V,k) \Longrightarrow \underline{\Ext}^{i+j}(\ind_K^G V,k).
$$
When $V$ is finite-dimensional $\underline{\Hom}(V,-)=\Hom_k(V,-)$ is exact, so $\underline{\Ext}^j(V,k)=0$ for $j>0$ and
$\underline{\Ext}^0(V,k)=V^*$. In this case the spectral sequence degenerates into the isomorphisms in \ref{degen}, and we are done.
\end{proof}


\section{The top-dimensional derived functor}\label{top}

In this section we take $G$ to be a $p$-adic Lie group of dimension $d=\dim_{\Q_p}(G)$, and we assume $\chara(k)=p$.

\begin{remark}\label{rem:top}
In part i of Proposition \ref{prop:Ind-fcd} we will show that, for any $V$ in $\Mod_k(K)$,
$$
R^i\Ind_K^G(V) = 0 \: \: \: \: \forall i>d.
$$ 
\end{remark}

A natural question is whether it is possible to compute the top-dimensional derived functors $R^d\Ind_K^G(V)$. For the trivial representation $V=k$ we have the following.

\begin{proposition}\label{proj}
Assume $G$ has a non-discrete center. Then:
\begin{itemize}
\item[i.] $R^d\Ind_K^G(k)=0$ for all compact open subgroups $K \subset G$;
\item[ii.] The category $\Mod_k(G)$ has no nonzero projective objects.
\end{itemize}
\end{proposition}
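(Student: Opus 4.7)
For part (i), my plan is to use the formula
$$R^d\Ind_K^G(k) = \varinjlim_U \varprojlim_{x\in G/U} H^d(K\cap xUx^{-1},k)$$
from Proposition~\ref{RInd} and to show the colimit vanishes by making every transition map $t^d_{U,U'}$ of Remark~\ref{trans} zero for a suitable smaller $U'\subseteq U$. It suffices to treat the case where $K$ is torsion-free uniform pro-$p$ of dimension $d$: for an arbitrary compact open $K$, picking such an open subgroup $K_0\subseteq K$, the short exact sequence $0\to N\to\Ind_{K_0}^K(k)\to k\to 0$ in $\Mod_k(K)$ combined with the vanishing $R^{d+1}\Ind_K^G=0$ from Remark~\ref{rem:top} gives a surjection $R^d\Ind_{K_0}^G(k)=R^d\Ind_K^G(\Ind_{K_0}^K k)\twoheadrightarrow R^d\Ind_K^G(k)$. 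For $K$ torsion-free uniform pro-$p$, every intersection $K\cap xUx^{-1}$ is a torsion-free analytic pro-$p$ group of dimension $d$, hence a Poincaré duality group at $p$.

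Using the non-discreteness of $Z$, I pick a nontrivial open pro-$p$ subgroup $Z_0\subseteq Z\cap K$. Given $U$, I choose open $U'\subseteq U$ with $Z_0\cap U'\subsetneq Z_0\cap U$. Centrality of $Z_0$ forces $Z_0\cap xUx^{-1}=Z_0\cap U$ and $Z_0\cap xU'x^{-1}=Z_0\cap U'$ for every $x$, so the inclusion $K\cap xU'x^{-1}\subsetneq K\cap xUx^{-1}$ is strict uniformly in $x$. The key input is then: for any proper open inclusion $H'\subsetneq H$ of torsion-free analytic pro-$p$ groups of the same dimension $d$, the restriction map $H^d(H,k)\to H^d(H',k)$ vanishes. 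Indeed both sides are one-dimensional by Poincaré duality, corestriction is an isomorphism there (dual to the identity on $H_0$), and $\cores\circ\res=[H:H']$ is a positive power of $p$, hence zero in $k$, forcing $\res=0$. This makes each $t^d_{U,U'}$ zero and completes (i).

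For part (ii) I use the Grothendieck spectral sequence from the adjunction $\res_K^G\dashv\Ind_K^G$ (valid because $\res_K^G$ is exact, so $\Ind_K^G$ preserves injectives):
$$E_2^{i,j}=\Ext^i_G\bigl(P,R^j\Ind_K^G(k)\bigr)\Longrightarrow\Ext^{i+j}_K(P|_K,k).$$
Suppose $P\in\Mod_k(G)$ is a nonzero projective. Smoothness furnishes a compact open $K$ with $P^K\neq 0$, which I shrink so that $K$ is torsion-free uniform pro-$p$ of dimension $d$ (shrinking only enlarges $P^K$). Projectivity of $P$ kills $\Ext^i_G(P,-)$ for $i\geq 1$, so the sequence collapses to isomorphisms $\Hom_G(P,R^j\Ind_K^G(k))\cong\Ext^j_K(P|_K,k)$, and part (i) forces $\Ext^d_K(P|_K,k)=0$.

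On the other hand, picking $0\neq v\in P^K$ gives an inclusion $k\hookrightarrow P|_K$ in $\Mod_k(K)$ with cokernel $Q$; since $K$ has cohomological $p$-dimension $d$, $\Ext^{d+1}_K(Q,k)=0$, and the long exact Ext sequence produces a surjection $\Ext^d_K(P|_K,k)\twoheadrightarrow\Ext^d_K(k,k)=H^d(K,k)\cong k$, contradicting the previous vanishing. Hence no nonzero projective exists. The main obstacle is part (i): arranging the strict inclusion $K\cap xU'x^{-1}\subsetneq K\cap xUx^{-1}$ uniformly over $x\in G/U'$ is exactly where the centrality of the witness in $Z_0$ is essential.
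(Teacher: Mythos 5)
Your proof is correct and follows essentially the same approach as the paper's: part~(i) via $\cores\circ\res = [\text{index}]$ on the one-dimensional top cohomology of Poincar\'e groups, with centrality giving the uniform-in-$x$ strict inclusion $K\cap xU'x^{-1}\subsetneq K\cap xUx^{-1}$, and part~(ii) via the degenerated Grothendieck spectral sequence plus Poincar\'e duality in degree~$d$. The only cosmetic differences are that the paper takes the inner variable~$U$ (rather than~$K$) to be Poincar\'e --- either way the intersections $K\cap xUx^{-1}$ become torsion-free of dimension~$d$ --- and in part~(ii) it invokes the direct isomorphism $\Ext^d_{\Mod_k(K)}(W,k)\cong\Hom_k(W^K,k)$ instead of your long-exact-sequence argument (where, incidentally, ``cohomological $p$-dimension~$d$'' literally controls $\Ext^*_K(k,-)$ rather than $\Ext^*_K(-,k)$; the needed vanishing $\Ext^{d+1}_K(Q,k)=0$ does hold, but by the global-dimension bound for $\Mod_k(K)$ with $K$ uniform of dimension~$d$).
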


\begin{proof}
For the proof of part one let $U \subset K$ be any open Poincar\'{e} subgroup, and let $c$ be as in Remark \ref{trans} with $V=k$. We must find an open subgroup
$U' \subset U$ such that $t_{U,U'}^d(c)=0$. The corestriction map
$$
\text{cor}_{K \cap x'U'x'^{-1}}^{K \cap x'Ux'^{-1}}: H^d(K \cap x'U'x'^{-1},k) \longrightarrow H^d(K \cap x'Ux'^{-1},k)
$$
is known to be an isomorphism (of one-dimensional spaces) for all $U'$. Its composition with the restriction map $\res_{K \cap x'U'x'^{-1}}^{K \cap x'Ux'^{-1}}$ is multiplication by the index. Thus $\res_{K \cap x'U'x'^{-1}}^{K \cap x'Ux'^{-1}}=0$ if this index is $>1$. To summarize, we must find $U'\subsetneq U$ such that we have strict inclusions
$$
K \cap gU'g^{-1} \subsetneq K \cap gUg^{-1}
$$
for all $g \in G$. Let $Z$ denote the center of $G$. Intersecting both sides above with $Z$ shows it is enough to pick a $U'$ such that $Z \cap U' \subsetneq Z\cap U$. For example,
for the right-hand side we get
$$
Z \cap (K \cap gUg^{-1})= K \cap g(Z \cap U)g^{-1}=K \cap(Z \cap U)=Z \cap U.
$$
Since we are assuming $Z$ is non-discrete $Z \cap U$ contains a non-identity element $z$ say. Pick an open neighborhood
$z(Z \cap U') \subset Z \cap U$ not containing the identity. Any such $U'$ works.

For part two let $W$ be an arbitrary object of $\Mod_k(G)$ and let $V$ be an object of $\Mod_k(K)$ as before. Note that $\Ind_{K}^G$ takes injective objects to injective objects
(since restriction is an exact left adjoint) and we therefore have a Grothendieck spectral sequence of the form
$$
E_2^{j,i}=\Ext_{\Mod_k(G)}^j(W, R^i\Ind_{K}^G(V)) \Longrightarrow \Ext_{\Mod_k(K)}^{i+j}(W,V)
$$
coming from Frobenius reciprocity. See \cite[p.~42]{Vig96} for example. If $\Hom_{\Mod_k(G)}(W,-)$ is exact we find that $E_2^{j,i}=0$ for all $j>0$, from which we deduce an isomorphism
$$
\Hom_{\Mod_k(G)}(W, R^i\Ind_{K}^G(V)) \simeq  \Ext_{\Mod_k(K)}^{i}(W,V).
$$
We specialize to the case $V=k$ and $i=d$. As we have just shown, the left-hand side vanishes in this case.
For part two $K$ only plays an auxiliary role and we may take it to be Poincar\'{e}. We infer that
$$
\Hom_k(H^0(K,W),k)\simeq \Ext_{\Mod_k(K)}^{d}(W,k)=0 \Longrightarrow H^0(K,W)=0 \Longrightarrow W=0
$$
by duality for Poincar\'{e} groups. See the review in \cite[Sect.~1]{SS} for instance.
\end{proof}

\begin{remark}\label{discrete}
Both parts of the previous Proposition clearly fail if $G$ is discrete (as $d=0$ and smooth $G$-representations are the same as abstract $k[G]$-modules). We do not know whether
the Proposition holds if we only assume $G$ itself is non-discrete.

Also, a natural question in the context of Proposition \ref{proj} is whether every homotopically projective complex of objects in $\Mod_k(G)$ is necessarily contractible. An affirmative answer would vastly generalize part ii of Proposition \ref{proj}.
\end{remark}

Theorem \ref{intromain} in the introduction gives a supplement to Proposition \ref{proj} for $p$-adic reductive groups $G$.


\section{The case of $p$-adic reductive groups}\label{red}

\subsection{Notation, conventions, and background}

In this article $\N=\{1,2,3,\ldots\}$ denotes the set of all positive integers.

We let $\F/\Q_p$ be a finite extension with valuation ring $\O$, and we choose a uniformizer $\pi$.
Take $\v$ to be the valuation on $\F$ satisfying $\v(\pi)=1$. As usual $q=p^f$ is the cardinality of the residue field $\O/\pi\O$, and $e=e(\F/\Q_p)$ denotes the ramification index.

In this section $\G$ is an arbitrary connected reductive group defined over $\F$, and $G=\G(\F)$. We choose a maximal $\F$-split subtorus ${\S} \subset {\G}$ and let $\Z$ denote its centralizer. Following our earlier convention,
$S=\S(\F)$ and $Z=\Z(\F)$ denote the $p$-adic Lie groups of $\F$-rational points.

Let $\Phi$ denote the roots of $\G$ relative to $\S$, and select a subset of positive roots $\Phi^+$ once and for all (then $\Phi^-=-\Phi^+$ is the set of negative roots). The root system may not be reduced, and we let $\Phi_{\red}$ be the subset of reduced roots ($\alpha \in \Phi$ such that $\frac{1}{2}\alpha \notin \Phi$). By $\Phi_{\red}^{+}$ and $\Phi_{\red}^{-}$ we mean the subsets of positive and negative reduced roots respectively.

Furthermore, we pick a special vertex $x_0$ in the apartment associated with $\S$, and consider the Bruhat-Tits group scheme ${\G}_{x_0}^\circ$ over $\O$. The special subgroup
$K_0={\G}_{x_0}^\circ(\O)$ and its principal congruence subgroups
$$
K_m=\ker\big({\G}_{x_0}^{\circ}(\O) \longrightarrow {\G}_{x_0}^{\circ}(\O/\pi^m\O)\big)
$$
play a pivotal role. The argument proving \cite[Cor.~7.8]{OS19} applies verbatim and shows $K_m$ is a uniform pro-$p$ group if $m \in e\N$ and $m>e$ if $p=2$. We give the proof below and compute the lower $p$-series for such $K_m$.

Let $\A$ be the maximal $\F$-split subtorus of the center of $\G$. We can arrange for $x_0$ to be the origin of the apartment $X_*({\S})/X_*({\A})\otimes \Bbb{R}$. In this case the
action of $z \in Z$ on the apartment is translation by the image of $\nu(z)$ where
$$
\nu: Z \longrightarrow X_*({\S})\otimes \Bbb{R}
$$
is the homomorphism for which
$$
\langle \nu(z),\chi | {\S} \rangle = -\v \chi(z)
$$
for all $\chi \in X^*({\Z})$. We refer to \cite[Sect.~I.1]{SSt} for more details. Later on we consider the monoid $Z^+$ of all $z \in Z$ such that $\langle \nu(z),\alpha \rangle \leq 0$ for all $\alpha \in \Phi^+$.

For each $\alpha \in \Phi$ we have a root subgroup $\U_\alpha$ of $\G$ with $\F$-rational points $U_\alpha$; the latter is normalized by $Z$. According to \cite[Thm.~9.6.5]{KP23} the root datum of $\G$ carries a valuation attached to $x_0$. By \cite[Df.~6.1.2]{KP23} this gives rise to a descending filtration of $U_\alpha$ by subgroups $(U_{\alpha,r})_{r\in \Bbb{R}}$ satisfying
\begin{equation}\label{f:ZconjU}
  zU_{\alpha,r}z^{-1} = U_{\alpha,r - \langle \nu(z),\alpha \rangle}  \qquad\text{for any $z \in Z$}.  \footnote{The $\nu$ in \cite[Df.~6.1.2 V6]{KP23} is the composite of our $\nu$ and the canonical splitting of the natural monomorphism $X_*({\S}') \otimes \Bbb{R} \hookrightarrow X_*({\S}) \otimes \Bbb{R}$ where ${\S}'$ is the maximal subtorus of ${\S}$ contained in the derived subgroup of $\G$ (\cite[4.1.4]{KP23}). This canonical splitting exists since the difference between ${\S}$ and ${\S}'$, up to isogeny, comes from the center of $\G$. Hence our number $\langle \nu(z),\alpha \rangle$, for a root $\alpha \in \Phi$, coincides with $\alpha(\nu(z))$ in \cite[Df.~6.1.2 V6]{KP23}. Note that \cite{KP23} contains a sign mistake as explained in \cite[2nd item on p.\ 23]{Kal}.
}
\end{equation}
This filtration needs to be modified. For any concave function $f$ on $\Phi \cup \{0\}$ \cite[Df.~7.3.3]{KP23} introduces the subgroup
\begin{equation*}
  U_{\alpha,f} := U_{\alpha,x_0,f} := U_{\alpha,f(\alpha)} \cdot U_{2\alpha,f(2\alpha)}
\end{equation*}
of $U_\alpha$. For any real number $r \geq 0$ the constant function $f_r$ with value $r$ is concave, and we abbreviate $\tilde{U}_{\alpha,r} := U_{\alpha,f_r}$.\footnote{\cite[Sect.~13.2]{KP23} replaces $f_r$ again by $r$ in the notation, which we find too confusing. For simplicity we drop the point $x_0$, which we fixed once and for all, from the notation.}

Before \cite[Lem.~9.8.1]{KP23} a descending filtration $(Z_r)_{r \geq 0}$ of $Z$ is constructed. We point out that this filtration depends on the connected reductive group $\Z$ and not the ambient group $\G$. Since the Bruhat-Tits building of $Z$ is a single point (\cite[Prop.~9.3.9]{KP23}), namely $x_0$, it follows from \cite[Prop.~13.2.5, part (2)]{KP23} that each $Z_r$ is a normal subgroup of $Z$.

Using these filtrations \cite[Def.~7.3.3]{KP23} then introduces a descending filtration $(\mathcal{P}_{x_0,r})_{r \geq 0}$ of $K_0 = {\G}_{x_0}^\circ(\O)$. The crucial property of these filtrations is the Iwahori factorization (\cite[Prop.~13.2.5, part (3)]{KP23}): For any $r > 0$ the multiplication map defines a homeomorphism
\begin{equation}\label{f:Iwahori-fac}
\prod_{\alpha \in \Phi_{\red}^-} \tilde{U}_{\alpha,r} \times Z_r \times \prod_{\alpha \in \Phi_{\red}^+} \tilde{U}_{\alpha,r} \iso \mathcal{P}_{x_0,r} \ .
\end{equation}
(The factors in each product are ordered in a fixed but arbitrary way.)

\begin{lemma}\label{lem:congruence-subgroup}
   For any $m > 0$ we have $K_m = \mathcal{P}_{x_0,m}$.
\end{lemma}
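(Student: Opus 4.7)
The plan is to prove both inclusions $\mathcal{P}_{x_0,m} \subseteq K_m$ and $K_m \subseteq \mathcal{P}_{x_0,m}$, in each case exploiting the Iwahori factorization \eqref{f:Iwahori-fac} together with the big-cell structure of the integral model ${\G}_{x_0}^\circ$ over $\O$.

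For the inclusion $\mathcal{P}_{x_0,m} \subseteq K_m$, by \eqref{f:Iwahori-fac} it suffices to check that each building block $\tilde{U}_{\alpha,m}$ (for $\alpha \in \Phi_{\red}$) and $Z_m$ reduces to the identity in ${\G}_{x_0}^\circ(\O/\pi^m\O)$. The valuation of the root datum is normalized precisely so that, after identifying each $\U_\alpha$ with its schematic closure inside ${\G}_{x_0}^\circ$, the filtration piece $U_{\alpha,m}$ is the $m$-th congruence kernel of this integral root subgroup scheme, and similarly for $U_{2\alpha,m}$; hence $\tilde{U}_{\alpha,m} \subseteq K_m$. For the centralizer, $Z_m$ is constructed (discussion preceding \cite[Lem.~9.8.1]{KP23}) as the $m$-th congruence subgroup of the connected N\'{e}ron model of $\Z$, whose closed immersion into ${\G}_{x_0}^\circ$ then places $Z_m$ inside $K_m$.

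For the reverse inclusion $K_m \subseteq \mathcal{P}_{x_0,m}$, I would run the Iwahori factorization in the opposite direction using the big cell of ${\G}_{x_0}^\circ$. The open $\O$-subscheme obtained as the image of $\prod_{\alpha \in \Phi_{\red}^-} \U_\alpha \times \Z \times \prod_{\alpha \in \Phi_{\red}^+} \U_\alpha$ under multiplication contains the identity in its special fiber, so any $g \in K_m \subseteq K_1$ lies in its $\O$-points and factors uniquely as $g = u^- z u^+$ with the factors in the integral root subgroups and in $\Z(\O)$. The congruence condition $g \equiv 1 \pmod{\pi^m}$ then forces each factor individually to lie in the $m$-th congruence kernel of its integral model, and by the identifications of the previous paragraph this puts $g$ inside $\mathcal{P}_{x_0,m}$.

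The main obstacle is this bookkeeping: matching the Moy-Prasad pieces $\tilde{U}_{\alpha,m}$ and $Z_m$ with the respective $m$-th congruence subgroups of their integral models inside ${\G}_{x_0}^\circ$, and checking that the big-cell factorization of ${\G}_{x_0}^\circ(\O)$ is compatible with reduction modulo $\pi^m$. Once these identifications are extracted from \cite[Prop.~13.2.5]{KP23} and from the construction of the integral root and centralizer subgroup schemes, the proof becomes a direct comparison of factors on both sides.
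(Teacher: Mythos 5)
Your route — decomposing both sides via the Iwahori factorization and the big cell of ${\G}_{x_0}^\circ$ and matching factor by factor — is genuinely different from the paper's. The paper's proof is shorter and purely scheme-theoretic: it invokes \cite[Prop.~9.8.3]{KP23} to realize $\mathcal{P}_{x_0,r}$ as the $\O$-points of a smooth affine group scheme $\mathcal{G}_{x_0,r}$ obtained by Galois descent from $\F^{\mathrm{ur}}$, and then quotes \cite[Prop.~8.5.16, Df.~A.5.12]{KP23} (dilatation) to get $\mathcal{G}_{x_0,r+m}(\O)=\ker\big(\mathcal{G}_{x_0,r}(\O)\to\mathcal{G}_{x_0,r}(\O/\pi^m\O)\big)$ directly; setting $r=0$ finishes. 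No big cell, no root-by-root comparison.

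There are two concrete gaps in your version. First, you assert as normalization that $\tilde{U}_{\alpha,m}$ is the $m$-th congruence kernel of the schematic closure of $\U_\alpha$ in ${\G}_{x_0}^\circ$. This identification is not part of the definition of the Moy–Prasad filtration: it is exactly what the paper establishes in Remark \ref{rem:factors-pcs}, and its proof there explicitly uses Lemma \ref{lem:congruence-subgroup}. If you want to run the argument in your order you would have to establish the identification for each $\U_{\alpha,x_0,r}$ independently (via \cite[Prop.~2.11.9, Prop.~9.8.3]{KP23} and the valuation of the root datum), which is comparable in depth to the paper's proof itself. As written, the step is circular or at least unsupported.

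Second, the factor $Z_m$ does not reduce by your method. When $\G$ is anisotropic (or $\Z$ is not a torus) there is no nontrivial big cell, and the claim that $Z_m$ is the $m$-th congruence kernel of the "connected Néron model of $\Z$" is itself an instance of the lemma applied to the group $\Z$, not a definitional fact from \cite{KP23}. So your proof silently invokes the very statement to be proved in its base case. The paper's argument applies uniformly to any connected reductive group, anisotropic or not, which is why it does not need (and cannot use) the Iwahori factorization at all.
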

\begin{proof}
By \cite[Prop.~9.8.3]{KP23} there is, for any $r \geq 0$, a smooth affine $\O$-group scheme of finite type  $\mathcal{G}_{x_0,r}$ such that $\mathcal{G}_{x_0,r}(\O)=\mathcal{P}_{x_0,r}$. It comes by descend from the maximal unramified extension $\F^\text{ur}$ of $\F$. By \cite[Prop.~8.5.16, Df.~A.5.12]{KP23} and passing to $\text{Gal}(\F^\text{ur}/\F)$-invariants we have
$$
\mathcal{G}_{x_0,r+m}(\O)=\ker\big(\mathcal{G}_{x_0,r}(\O) \longrightarrow \mathcal{G}_{x_0,r}(\O/\pi^m\O)\big).
$$
Now take $r=0$ and observe that $\mathcal{G}_{x_0,0}={\G}_{x_0}^\circ$ since $K_0=\mathcal{P}_{x_0,0}$.
\end{proof}

As a consequence we obtain from \eqref{f:Iwahori-fac} the Iwahori factorization
\begin{equation}\label{f:Iwahori-fac-Km}
\prod_{\alpha \in \Phi_{\red}^-} \tilde{U}_{\alpha,m} \times Z_m \times \prod_{\alpha \in \Phi_{\red}^+} \tilde{U}_{\alpha,m} \iso K_m    \qquad\text{for any $m >0$}.
\end{equation}

\begin{remark}\label{rem:factors-pcs}
For any $m > 0$ and $\alpha \in \Phi_{\red}$ the filtration subgroups $\tilde{U}_{\alpha,m}$ and $Z_m$ are themselves principal congruence subgroups. For $Z_m$ this is a special case of Lemma \ref{lem:congruence-subgroup} since $Z_m$ is the analog of $\mathcal{P}_{x_0,m}$ for the connected reductive $\F$-group $\mathbf{Z}$.

To see that  $\tilde{U}_{\alpha,m}$ is a principal congruence subgroup we argue as follows. By \cite[Prop.~2.11.9, Prop.~9.8.3]{KP23} there is, for $r \geq 0$, a unique closed subgroup scheme
${\U}_{\alpha,x_0,r} \subset \mathcal{G}_{x_0,r}$ such that ${\U}_{\alpha,x_0,r}(\O) = \tilde{U}_{\alpha,r}$. Because ${\U}_{\alpha,x_0,r}$ is closed the map
${\U}_{\alpha,x_0,r}(\O/\pi^m\O) \hookrightarrow  \mathcal{G}_{x_0,r}(\O/\pi^m\O)$ is still an inclusion and therefore
\begin{align*}
\tilde{U}_{\alpha,m} &= \mathcal{P}_{x_0,m} \cap \tilde{U}_{\alpha,0} = K_m \cap \tilde{U}_{\alpha,0}  \\
&= \ker\big({\G}_{x_0}^{\circ}(\O) \longrightarrow {\G}_{x_0}^{\circ}(\O/\pi^m\O)\big) \cap \tilde{U}_{\alpha,0}\\
&=\ker\big({\U}_{\alpha,x_0,0}(\O) \longrightarrow {\G}_{x_0}^{\circ}(\O/\pi^m\O)\big) \\
&=\ker\big({\U}_{\alpha,x_0,0}(\O) \longrightarrow {\U}_{\alpha,x_0,0}(\O/\pi^m\O)\big) .
\end{align*}
The first equality follows from \cite[Prop.~13.2.5, part (3)]{KP23}, applied to $\mathcal{P}_{x_0,m}$, after comparing the components associated with the root $\alpha$. The second equality is Lemma \ref{lem:congruence-subgroup}.
\end{remark}

\subsection{Some uniform subgroups and their cohomology}

We begin with the following variant of \cite[Cor.~7.7]{OS19}. Let $\mathcal{G}=\text{Spec}(A)$ be a smooth affine $\O$-group scheme, and $\widehat{\mathcal{G}}=\text{Spf}(\widehat{A}^\frak{p})$ be its formal completion in the unit section. Upon choosing an isomorphism $\widehat{A}^\frak{p}\simeq \frak{O}[\![X_1,\ldots,X_\delta ]\!]$ we get a homeomorphism
$\xi: \widehat{\mathcal{G}}(\frak{O}) \overset{\sim}{\longrightarrow} (\pi\O)^\delta$. We let
$$
\widehat{\mathcal{G}}_m(\frak{O}):=\xi^{-1}\big((\pi^m\frak{O})^\delta\big)=\ker\big(\mathcal{G}(\frak{O})\longrightarrow \mathcal{G}(\frak{O}/\pi^m\frak{O})\big).
$$
With this notation we have the following slight generalization of \cite[Cor.~7.7]{OS19}.

\begin{lemma}\label{variant}
Let $m\in e\N$, and assume $m>e$ if $p=2$. Then the congruence subgroup $\widehat{\mathcal{G}}_m(\frak{O})$ is a uniform pro-$p$ group. Furthermore
$$
\widehat{\mathcal{G}}_{m+e}(\frak{O})=\widehat{\mathcal{G}}_m(\frak{O})^p=\{g^p: g \in \widehat{\mathcal{G}}_m(\frak{O})\}.
$$
(This gives the lower $p$-series for $\widehat{\mathcal{G}}_m(\frak{O})$ by iteration.)
\end{lemma}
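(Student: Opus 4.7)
The strategy is to work entirely in the formal-group-law coordinates provided by $\xi$. The multiplication morphism of $\widehat{\mathcal{G}}$ pulls back to a $\delta$-tuple of power series $F=(F_1,\ldots,F_\delta)$ in $\frak{O}[\![X_1,\ldots,X_\delta,Y_1,\ldots,Y_\delta]\!]$ with $F_i(X,Y)=X_i+Y_i+(\text{terms of total degree}\geq 2)$, and similarly the inversion pulls back to $\iota(X)=-X+(\text{terms of degree}\geq 2)$. Under $\xi$ the subgroup $\widehat{\mathcal{G}}_m(\frak{O})$ corresponds precisely to $(\pi^m\O)^\delta$, so the entire proof reduces to valuation bookkeeping in these coordinates.

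From the formal group law I extract two basic estimates. First, by induction the $p$-th power map pulls back to a tuple of the form $[p](X)=pX+(\text{terms of total degree}\geq 2)$; since $\v(p)=e$, for $m\geq e$ this sends $(\pi^m\O)^\delta$ into $(\pi^{m+e}\O)^\delta$. Second, the group-theoretic commutator pulls back to power series vanishing when either argument vanishes and of bidegree at least $(1,1)$, which gives
$$
[\widehat{\mathcal{G}}_m(\frak{O}),\widehat{\mathcal{G}}_n(\frak{O})]\subseteq \widehat{\mathcal{G}}_{m+n}(\frak{O}).
$$

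With these in hand I check the three defining conditions of a uniform pro-$p$ group for $G:=\widehat{\mathcal{G}}_m(\frak{O})$. Topological finite generation by exactly $\delta$ elements is clear from the coordinate description. Torsion-freeness follows because $[p]^n$ raises the coordinate valuation by $ne$, so a nontrivial torsion element would need coordinates of infinite valuation, a contradiction. For \emph{powerful}, one combines the commutator estimate $[G,G]\subseteq \widehat{\mathcal{G}}_{2m}(\frak{O})$ with the core equality $\widehat{\mathcal{G}}_m(\frak{O})^p=\widehat{\mathcal{G}}_{m+e}(\frak{O})$ (proved below), which inductively yields $G^{p^k}=\widehat{\mathcal{G}}_{m+ke}(\frak{O})$. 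For $p$ odd one needs $[G,G]\subseteq G^p$, i.e.\ $2m\geq m+e$, so $m\geq e$, which holds because $m\in e\N$. For $p=2$ one needs $[G,G]\subseteq G^4=\widehat{\mathcal{G}}_{m+2e}(\frak{O})$, i.e.\ $m\geq 2e$, which is precisely the hypothesis $m>e$ combined with $m\in e\N$.

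The technical heart of the proof is thus the equality $\widehat{\mathcal{G}}_m(\frak{O})^p=\widehat{\mathcal{G}}_{m+e}(\frak{O})$. The inclusion $\subseteq$ is the power-series estimate on $[p]$. For the reverse inclusion, given $y\in (\pi^{m+e}\O)^\delta$ I must produce $x\in (\pi^m\O)^\delta$ solving $[p](x)=y$, and I would do this by Newton iteration: set $x_0:=y/p\in(\pi^m\O)^\delta$ and iterate $x_{n+1}:=x_n+(y-[p](x_n))/p$. Since $[p](X)-pX$ has coordinates of total degree $\geq 2$ in $X$, one checks that if $x_n\in(\pi^m\O)^\delta$ and $[p](x_n)-y\in(\pi^{m+e+\ell}\O)^\delta$ for some $\ell\geq 0$, then $x_{n+1}-x_n\in(\pi^{m+\ell}\O)^\delta$ and $[p](x_{n+1})-y\in(\pi^{m+e+\ell'}\O)^\delta$ with $\ell'>\ell$, provided $m\geq e$. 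The main obstacle is precisely this valuation bookkeeping at the boundary $m=e$ (resp.\ $m=2e$ for $p=2$): one must verify both that each iterate stays inside $(\pi^m\O)^\delta$ and that the error improves strictly and enough to force $\pi$-adic convergence to a solution. The stated formula for the lower $p$-series then follows by iterating the equality.
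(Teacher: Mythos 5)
Your approach is genuinely different from the paper's. The paper cites \cite[Cor.~7.7]{OS19} for uniformity, reduces to $\mathfrak{O}=\mathbb{Z}_p$ by Weil restriction, and obtains $\widehat{\mathcal{G}}_m^p=\widehat{\mathcal{G}}_{m+e}$ by invoking the proof of \cite[Thm.~8.31]{DdSMS} on \emph{standard groups} -- an argument that passes to exponential/Lie-algebra coordinates, in which $[p]$ is literal multiplication by $p$ and surjectivity is immediate. You instead try a self-contained computation in the arbitrary chart coordinates $\xi$: the commutator estimate $[\widehat{\mathcal{G}}_m,\widehat{\mathcal{G}}_n]\subseteq\widehat{\mathcal{G}}_{m+n}$ is standard and fine, but the key equality $\widehat{\mathcal{G}}_m(\mathfrak{O})^p=\widehat{\mathcal{G}}_{m+e}(\mathfrak{O})$ is attacked by Newton iteration, and there your argument has a genuine gap, sitting exactly where you flag it.

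With $[p](X)=pX+(\text{degree}\geq 2)$, $x_n\in(\pi^m\mathfrak{O})^\delta$, and $[p](x_n)-y\in(\pi^{m+e+\ell}\mathfrak{O})^\delta$, the step $h_n:=(y-[p](x_n))/p$ lies in $(\pi^{m+\ell}\mathfrak{O})^\delta$, and after the linear term cancels the new error $[p](x_{n+1})-y$ is governed by the degree-$\geq 2$ part of $[p]$ applied to $x_n$ and $h_n$, hence lies in $(\pi^{2m+\ell}\mathfrak{O})^\delta$; writing $2m+\ell=m+e+\ell'$ gives $\ell'=\ell+(m-e)$. This is strictly larger than $\ell$ only if $m>e$, not ``$m\geq e$'' as you assert, so at the boundary $m=e$ (allowed for $p$ odd, and exactly where the lemma has content) the error does not visibly shrink. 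Your remark that ``one must verify'' this case is an admission, not a verification. To close it you need an input you never state: for the formal completion of a smooth affine $\mathfrak{O}$-group scheme and $p>2$, the degree-$k$ coefficients of $[p](X)-pX$ are divisible by $p$ for $2\le k<p$ (visible for $\mathbb{G}_m$ via $\binom{p}{k}\equiv 0\bmod p$, and in general by embedding into some $\mathrm{GL}_N$ and comparing cotangent spaces); together with the remaining terms having degree $\geq p\geq 3$ this upgrades the boundary estimate to $\ell'\geq\ell+e$. Without that observation -- or a substitute such as the exponential coordinates the DdSMS proof uses under the hood -- the surjectivity $\widehat{\mathcal{G}}_{m+e}(\mathfrak{O})\subseteq\widehat{\mathcal{G}}_m(\mathfrak{O})^p$, and hence the ``powerful'' step of your uniformity check which relies on it, is not established.
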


\begin{proof}
The fact that $\widehat{\mathcal{G}}_m(\frak{O})$ is uniform is precisely the content of \cite[Cor.~7.7]{OS19}. Here we compute its lower $p$-series. By considering $\mathcal{G}_0=\text{Res}_{\frak{O}/\Bbb{Z}_p}\mathcal{G}$, and noting that $\widehat{\mathcal{G}}_{0,j}(\Bbb{Z}_p)=\widehat{\mathcal{G}}_m(\frak{O})$ if $m=ej$, we may (and will) assume $\frak{O}=\Bbb{Z}_p$.

We first deal with the case $p>2$. As explained in \cite[7.2.1]{OS19} the group  $\widehat{\mathcal{G}}(\Bbb{Z}_p)$ is a standard group in the sense of \cite[Df.~8.22]{DdSMS}. The (last paragraph of the) proof of \cite[Thm.~8.31]{DdSMS} shows that $\widehat{\mathcal{G}}(\Bbb{Z}_p)^{p^{m-1}}=\widehat{\mathcal{G}}_m(\Bbb{Z}_p)$. (See also part (iii) of \cite[Thm.~3.6]{DdSMS}
which gives the lower $p$-series of a uniform group.) Raising both sides to the $p^\text{th}$ power gives the result.

For $p=2$ we work with $\widehat{\mathcal{G}}_2(\Bbb{Z}_2)$ which again is standard for the same reason. The proof of \cite[Thm.~8.31]{DdSMS} (with $\varepsilon=1$) also shows that $\widehat{\mathcal{G}}_2(\Bbb{Z}_2)^{2^{m-2}}=\widehat{\mathcal{G}}_{m}(\Bbb{Z}_2)$ for $m \geq 2$. Taking squares gives the result.
\end{proof}

In particular $K_m$ as well as $\tilde{U}_{\alpha,m}$ and $Z_m$ (by Remark \ref{rem:factors-pcs}) all are uniform pro-$p$ groups if $m \in e\N$ and $m>e$ if $p=2$; moreover
\begin{equation}\label{f:lower-series}
  K_m^p = K_{m+e} , \ Z_m^p = Z_{m+e}, \ \text{and}\quad  \tilde{U}_{\alpha,m}^p = \tilde{U}_{\alpha,m+e} \ .
\end{equation}

Next we will see that in certain situations intersections of uniform subgroups are uniform. In the following we put $\wp := p$ if $p>2$, and $\wp := 4$ if $p=2$.

\begin{lemma}\label{lem:intersec-uniform}
  Let $(A,\|\cdot\|)$ be a finite dimensional normed $\mathbb{Q}_p$-algebra whose norm $\|\cdot\|$ is submultiplicative and let $A_0 := \{ a \in A : \|a\| \leq \wp\}$. Fix a closed subgroup $\Gamma \leq 1+A_0$. Let $K,K'$ be uniform open subgroups of $\Gamma$, and $a \in A^{\times}$ an arbitrary unit. If $a^{-1}K'a\cap K$ is open in $K$ then it is uniform.
\end{lemma}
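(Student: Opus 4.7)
The plan is to use the $p$-adic logarithm and exponential to convert the problem into a statement about $\Bbb{Z}_p$-Lie sublattices of $A_0$.  The cutoff $\wp$ is chosen precisely so that the series for $\log$ and $\exp$ converge on $1+A_0$ and $A_0$ respectively and define mutually inverse homeomorphisms $1+A_0 \iso A_0$, and so that under this bijection the uniform open pro-$p$ subgroups of $1+A_0$ correspond to the closed $\Bbb{Z}_p$-Lie sublattices of $A_0$ which are open in their own $\Bbb{Q}_p$-span.  This is Lazard's theory, developed in detail in \cite[Ch.~6--9]{DdSMS} and underlying the setup of \cite[Sect.~7]{OS19}.

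Set $L := \log K$ and $L' := \log K'$; by the dictionary these are closed $\Bbb{Z}_p$-Lie sublattices of $A_0$.  A term-by-term manipulation of the log power series gives $\log(a^{-1}g a) = a^{-1}(\log g)\,a$ whenever $g$ and $a^{-1}ga$ both lie in $1+A_0$, so applying $\log$ pointwise we obtain
$$
\log\bigl(a^{-1}K'a \cap K\bigr) \;=\; (a^{-1}L'a)\cap L \;=:\; M.
$$
Because $[a^{-1}xa,a^{-1}ya]=a^{-1}[x,y]a$, the conjugate $a^{-1}L'a$ is itself a $\Bbb{Z}_p$-Lie subalgebra of $A$, and hence $M$ is a closed $\Bbb{Z}_p$-submodule of $L$ that is closed under the Lie bracket.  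The hypothesis that $a^{-1}K'a\cap K$ is open in $K$ translates, through the homeomorphism $\log$, into $M$ being of finite index in $L$; thus $M$ is a closed $\Bbb{Z}_p$-Lie sublattice of $A_0$ of the same $\Bbb{Z}_p$-rank as $L$, and by the dictionary its image $\exp M = a^{-1}K'a\cap K$ is uniform.

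The main point to pin down carefully is the $\log$/$\exp$ dictionary itself -- namely, that every closed $\Bbb{Z}_p$-Lie sublattice of $A_0$ of the appropriate rank exponentiates to a uniform subgroup of $1+A_0$.  This is where the precise value $\wp=p$ (respectively $\wp=4$ for $p=2$) is essential: it is the largest cutoff for which the Campbell--Baker--Hausdorff series converges and endows any $\Bbb{Z}_p$-Lie sublattice of $A_0$ with a powerful, torsion-free group structure, whose image under $\exp$ is then the corresponding uniform subgroup.  The rest of the argument (intertwining of $\log$ with conjugation, the intersection being a Lie sublattice, openness translating to finite index) is formal once that dictionary is in place.
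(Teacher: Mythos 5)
Your route is genuinely different from the paper's. The paper verifies the three defining properties of a uniform pro-$p$ group directly — torsion-freeness and finite generation are immediate, and for powerfulness it writes a commutator $[x,y]$ as a $\wp$-th power $\kappa^\wp = (a^{-1}\kappa' a)^\wp$ in two ways and uses $\log$/$\exp$ only to conclude $\kappa = a^{-1}\kappa' a$. You instead try to pass entirely to the Lie-algebra side via the Lazard/DdSMS dictionary and intersect there. That is a reasonable idea, but as written it rests on a false general principle.

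The problem is the assertion that \emph{every} closed $\mathbb{Z}_p$-Lie sublattice of $A_0$ exponentiates to a uniform subgroup. That is not true: the Lie-theoretic correspondence requires $M$ to be a \emph{powerful} Lie lattice, i.e.\ $[M,M] \subseteq \wp M$, and this is not automatic for an arbitrary Lie sublattice of $A_0$. For instance, in $A = M_2(\mathbb{Q}_p)$ with $p > 3$, $A_0 = pM_2(\mathbb{Z}_p)$, and $h = e_{11}-e_{22}$, the sublattice $M = p\mathbb{Z}_p e_{12} \oplus p\mathbb{Z}_p e_{21} \oplus p^2\mathbb{Z}_p h$ is closed under the bracket, but $[pe_{12}, pe_{21}] = p^2 h \notin pM$, so $M$ is not powerful and $\exp M$ is not uniform. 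To repair your argument you must check that the specific $M = L \cap a^{-1}L'a$ is powerful. This does hold: since $L = \log K$ and $L' = \log K'$ are powerful (as $K, K'$ are uniform) and conjugation preserves both the bracket and multiplication by $\wp$, one gets $[M,M] \subseteq \wp L \cap \wp(a^{-1}L'a) = \wp(L \cap a^{-1}L'a) = \wp M$, using that $\wp$ is invertible in $A$. But this step is missing from your proposal, and without it the argument does not close. A secondary, smaller point: the claim that $\log K$ is a $\mathbb{Z}_p$-Lie sublattice of $A_0$ whose bracket agrees with the ambient commutator bracket is correct but is itself a nontrivial piece of Lazard theory; it deserves a precise reference rather than the blanket citation of \cite[Ch.~6--9]{DdSMS}.
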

\begin{proof}
First of all note that $A_0$ is a open pro-$p$ subgroup of the additive group $A$, and $1 + A_0$ is an open pro-$p$ subgroup of the multiplicative group $A^\times$. It is well known (cf.\ \cite[Prop.\ 6.22 and Cor.\ 6.25]{DdSMS}) that the usual exponential and logarithm power series induce homeomorphisms
\begin{equation*}
  \exp: A_0 \longrightarrow 1+A_0 \quad\text{and}\quad \log: 1+A_0 \longrightarrow A_0 \ ,
\end{equation*}
which are inverse to each other.

We recall that a pro-$p$ group $H$ is uniform if and only if the following conditions are satisfied (\cite[Thm.\ 4.5]{DdSMS}):
\begin{itemize}
\item[(1)] $H$ is (topologically) finitely generated;
\item[(2)] $H$ is torsion-free;
\item[(3)] $H$ is powerful -- which means $H/\overline{H^\wp}$ is abelian.
\end{itemize}
Here $H^n$, for $n \in \mathbb{N}$, is a priori the subgroup generated by the set of $n$-powers. However, when $H$ is uniform $H^\wp$ is the same as the set of all $\wp$-powers by \cite[Lemma 3.4]{DdSMS} ; the same result shows that $H^\wp$ is open.

Obviously the intersection is torsion-free since $K$ is. Since $a^{-1}K'a\cap K$ is open in $K$ by assumption it is finitely generated by \cite[Prop.\ 1.7]{DdSMS} .
It remains to show it is powerful, so choose two elements $x,y \in a^{-1}K'a \cap K$ arbitrarily. It suffices to show the commutator
$[x,y]$ is a $\wp$-power from $a^{-1}K'a\cap K$. Since $K$ and $a^{-1}K'a \cong K'$ are uniform we can write
\begin{equation*}
  [x,y] = \kappa^\wp = (a^{-1} \kappa' a)^\wp
\end{equation*}
for suitable $\kappa \in K$ and $\kappa' \in K'$. We just need to argue that $\kappa = a^{-1} \kappa' a$. Taking $\log$ above, using \cite[Cor.\ 6.25(iii)]{DdSMS}, yields
$\log[x,y] = \wp \log \kappa$. Since $a^{-1} \kappa' a$ may not lie in $1+A_0$ we have to note that nevertheless $\log(a^{-1} \kappa' a)$ makes sense. Indeed the series
\begin{equation*}
  \log(a^{-1} \kappa' a) = \sum_{n=1}^{\infty} \frac{(-1)^{n+1}}{n}(a^{-1} \kappa' a-1)^n = \sum_{n=1}^{\infty} \frac{(-1)^{n+1}}{n}a^{-1}(\kappa' -1)^n a = a^{-1} \log(\kappa') a
\end{equation*}
still converges. Having checked this, taking $\log$ as above we deduce $\log[x,y] = \wp \log(a^{-1} \kappa' a)$. Comparing the two expressions for $\log[x,y] \in \wp A_0$ and dividing by $\wp$ (in $A$) we infer the equality $\log(\kappa) = \log(a^{-1} \kappa' a)$ in $A_0$. Finally take $\exp$ on both sides, again noting that
\begin{equation*}
  \exp\big(\log(a^{-1} \kappa' a) \big) = \exp \big(a^{-1}\log(\kappa')a\big) = a^{-1}\exp \big(\log(\kappa')\big)a = a^{-1}\kappa'a \ .
\end{equation*}
This gives the equality $\kappa = a^{-1} \kappa' a$ as desired. We conclude $a^{-1} K' a \cap K$ that is uniform.
\end{proof}

\begin{example}
Our main example is the matrix algebra $A = M_N(\mathbb{Q}_p)$ with the norm $\|a\|=\max_{i,j} |a_{ij}|$. The multiplicative group $1+A_0$ is the congruence subgroup $\ker(\mathrm{GL}_N(\mathbb{Z}_p)\rightarrow \mathrm{GL}_N(\mathbb{Z}_p/\wp\mathbb{Z}_p))$. Then  Lemma \ref{lem:intersec-uniform} implies the following: If $K$ and $K'$ are uniform closed subgroups of $\ker(\mathrm{GL}_N(\mathbb{Z}_p)\rightarrow \mathrm{GL}_N(\mathbb{Z}_p/\wp \mathbb{Z}_p))$ then so is $a^{-1} K' a \cap K$ for any $a \in \mathrm{GL}_N(\mathbb{Q}_p)$ such that $a^{-1} K' a \cap K$ is open in $K$.
\end{example}

\begin{proposition}\label{prop:intersec-uniform}
Let $m,n \in e\N$ assuming $m,n>e$ if $p=2$; we then have:
\begin{itemize}
  \item[a.] $K_m \cap gK_ng^{-1}$ is uniform for all $g \in G$;
  \item[b.] $\tilde{U}_{\alpha,m} \cap z \tilde{U}_{\alpha,n} z^{-1}$ is uniform for all $\alpha \in \Phi$ and $z \in Z$,
\end{itemize}
\end{proposition}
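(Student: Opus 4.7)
The strategy is to arrange everything so that Lemma \ref{lem:intersec-uniform} applies directly. First, fix a closed embedding of $\O$-group schemes $\G_{x_0}^\circ \hookrightarrow \mathrm{GL}_{N,\O}$; this exists because $\G_{x_0}^\circ$ is smooth affine of finite type over $\O$. Apply Weil restriction from $\O$ to $\mathbb{Z}_p$ and compose with the natural closed embedding $\mathrm{Res}_{\O/\mathbb{Z}_p}\mathrm{GL}_{N,\O} \hookrightarrow \mathrm{GL}_{N',\mathbb{Z}_p}$, where $N' := N\cdot [\F:\mathbb{Q}_p]$, to realize $G$ as a closed subgroup of $\mathrm{GL}_{N'}(\mathbb{Q}_p)$. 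Set $A := M_{N'}(\mathbb{Q}_p)$ with the matrix sup-norm, so that $1+A_0$ coincides with the $\wp$-th principal congruence subgroup of $\mathrm{GL}_{N'}(\mathbb{Z}_p)$.

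The key numerical point is that for $m \in e\N$ with $m > e$ when $p=2$, the identification $\pi^m \O = p^{m/e} \O$ forces $K_m$ into the $p^{m/e}$-th principal congruence subgroup of $\mathrm{GL}_{N'}(\mathbb{Z}_p)$, and $p^{m/e} \geq \wp$ in both cases ($m/e \geq 1$ if $p > 2$, and $m/e \geq 2$ if $p=2$). Consequently $K_m \leq 1+A_0$ throughout the admissible range. To invoke Lemma \ref{lem:intersec-uniform} for part (a), take $\Gamma := K_{\min(m,n)}$, a closed subgroup of $1+A_0$. Both $K := K_m$ and $K' := K_n$ are uniform (by Lemma \ref{variant}) open subgroups of $\Gamma$, the openness being visible from \eqref{f:lower-series}. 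With $a := g^{-1}$, the intersection $a^{-1}K'a \cap K = gK_ng^{-1}\cap K_m$ is a compact open subgroup of $K_m$, so Lemma \ref{lem:intersec-uniform} delivers uniformity of $K_m \cap gK_ng^{-1}$.

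For part (b), repeat the argument verbatim with the closed subgroup scheme $\U_{\alpha,x_0,0} \hookrightarrow \G_{x_0}^\circ$ of Remark \ref{rem:factors-pcs} in place of $\G_{x_0}^\circ$ itself; the resulting closed embedding into $\mathrm{GL}_{N',\mathbb{Z}_p}$ and the same numerical estimate give $\tilde{U}_{\alpha,m} \leq 1+A_0$ in the admissible range. Because $z \in Z$ normalizes $U_\alpha$, the conjugate $z\tilde{U}_{\alpha,n}z^{-1}$ is still a compact open subgroup of $U_\alpha$. Taking $\Gamma := \tilde{U}_{\alpha,\min(m,n)}$, $K := \tilde{U}_{\alpha,m}$, $K' := \tilde{U}_{\alpha,n}$, and $a := z^{-1}$, Lemma \ref{lem:intersec-uniform} again applies. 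The only real obstacle is the bookkeeping required to align the range of $m$ that makes $K_m$ (resp.\ $\tilde{U}_{\alpha,m}$) uniform via Lemma \ref{variant} with the range that places it inside $1+A_0$; the hypotheses of the proposition are engineered precisely so that these two conditions coincide.
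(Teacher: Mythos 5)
Your proof is correct and follows the same approach as the paper: embed $G$ into some $\mathrm{GL}_{N'}(\mathbb{Q}_p)$ via a faithful representation of the Weil restriction of $\mathbf{G}_{x_0}^\circ$, and then apply the Example/Lemma~\ref{lem:intersec-uniform}. The paper's proof is a one-line reduction to the Example; you simply spell out the bookkeeping (the identification $\pi^m\O = p^{m/e}\O$ and the check that $p^{m/e} \geq \wp$ in the admissible range of $m$) that the paper leaves implicit.
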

\begin{proof}
As we noted before \eqref{f:lower-series} all groups of which we take intersections are uniform. The assertion then follows from the above Example by using a faithful representation of the Weil restriction to $\mathbb{Z}_p$ of $\mathbf{G}_{x_0}^\circ$ into some $\mathrm{GL}_N$.
\end{proof}

Throughout we fix a field $k$ of characteristic $p$. A uniform pro-$p$ group $U$ admits a canonical $p$-valuation defined by the lower $p$-series. Thus $U$ becomes equi-$p$-valued, and by \cite[Ch.~V, Prop.~2.5.7.1]{Laz} the cup product gives an isomorphism of graded $k$-algebras
$$
\bigwedge H^1(U,k) \iso H^*(U,k).
$$
Moreover $H^1(U,k)=\Hom_{\Bbb{F}_p}(U/U^p,k)$ is the dual Frattini quotient, which we will henceforth denote $U_{\Phi}^*$. (Here the subscript $\Phi$ is standard notation for Frattini quotients, and should not be confused with the root system.) Our exterior products $\bigwedge=\bigwedge_{k}$ are always over $k$.

\begin{proposition}\label{decompose}
Consider arbitrary $m,n \in e\N$, both assumed to be $>e$ if $p=2$, and $z \in Z$. Then $H^i(K_m \cap zK_nz^{-1},k)$ decomposes as a direct sum
$$
\bigoplus \bigg[\bigotimes_{\alpha \in \Phi_{\red}^-} \bigwedge^{a_{\alpha}} (\tilde{U}_{\alpha,m}\cap z\tilde{U}_{\alpha,n}z^{-1})_{\Phi}^* \otimes
\bigwedge^b (Z_{\max\{m,n\}})_{\Phi}^* \otimes \bigotimes_{\alpha \in \Phi_{\red}^+} \bigwedge^{c_{\alpha}} (\tilde{U}_{\alpha,m}\cap z\tilde{U}_{\alpha,n}z^{-1})_{\Phi}^*\bigg]
$$
where $\{a_{\alpha}\}_{\alpha\in \Phi_{\red}^-}$, $b$, $\{c_{\alpha}\}_{\alpha\in \Phi_{\red}^+}$ in the sum $\bigoplus$ range over non-negative integers with sum $i$.
\end{proposition}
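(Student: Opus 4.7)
The plan is to decompose $J := K_m \cap zK_nz^{-1}$ via an Iwahori factorization, use this to split the Frattini quotient $J/J^p$ as a direct sum, and then apply Lazard's exterior algebra isomorphism. By Proposition \ref{prop:intersec-uniform}(a), $J$ is uniform. Conjugating the Iwahori factorization \eqref{f:Iwahori-fac-Km} for $K_n$ by $z$, using $zZ_nz^{-1} = Z_n$ (normality in $Z$) and \eqref{f:ZconjU} (which guarantees each $z\tilde{U}_{\alpha,n}z^{-1}$ remains a subgroup of $U_\alpha$), yields an analogous product decomposition of $zK_nz^{-1}$. Now the multiplication map $U_- \times Z \times U_+ \to G$ is injective on the big Bruhat cell, and each $\prod_{\alpha \in \Phi_{\red}^{\pm}} U_\alpha \iso U_\pm$ is a homeomorphism. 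For $j \in J$, apply this global uniqueness to the Iwahori factorizations of both $K_m$ and $zK_nz^{-1}$ simultaneously: each root component $u_\alpha$ is forced to lie in $\tilde{U}_{\alpha,m} \cap z\tilde{U}_{\alpha,n}z^{-1}$, while the central component $z_0$ is forced into $Z_m \cap Z_n = Z_{\max\{m,n\}}$ (using that $(Z_r)$ is descending). This yields the Iwahori factorization
$$\prod_{\alpha \in \Phi_{\red}^-}(\tilde{U}_{\alpha,m}\cap z\tilde{U}_{\alpha,n}z^{-1}) \times Z_{\max\{m,n\}} \times \prod_{\alpha \in \Phi_{\red}^+}(\tilde{U}_{\alpha,m}\cap z\tilde{U}_{\alpha,n}z^{-1}) \iso J.$$

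Each factor above is uniform: the root factors by Proposition \ref{prop:intersec-uniform}(b), and $Z_{\max\{m,n\}}$ by Lemma \ref{variant} applied to the reductive $\F$-group $\Z$. Since $J$ is uniform, $J/J^p$ is abelian and $A^p \subseteq J^p$ for each factor $A$, so the Iwahori factorization induces a natural $\mathbb{F}_p$-linear map $\bigoplus_A A/A^p \longrightarrow J/J^p$, which is surjective because products reduce to sums modulo $J^p$. Comparing dimensions,
$$\dim_{\mathbb{F}_p}(J/J^p) = \dim_{\mathbb{Q}_p} J = \sum_A \dim_{\mathbb{Q}_p} A = \sum_A \dim_{\mathbb{F}_p}(A/A^p),$$
where the outer equalities use uniformity (for a uniform pro-$p$ group, $\dim_{\mathbb{F}_p}$ of the Frattini quotient equals the Lie dimension) and the middle equality uses that the Iwahori factorization is a homeomorphism. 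Thus the surjection is an isomorphism, and dualizing gives $J^*_\Phi \simeq \bigoplus_A A^*_\Phi$ as $k$-vector spaces.

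Finally, Lazard's graded $k$-algebra isomorphism $H^*(J,k) \simeq \bigwedge_k J^*_\Phi$, combined with the standard expansion $\bigwedge(V_1 \oplus \cdots \oplus V_r) \simeq \bigwedge V_1 \otimes \cdots \otimes \bigwedge V_r$, yields
$$H^*(J,k) \simeq \bigotimes_{\alpha \in \Phi_{\red}^-}\bigwedge (\tilde{U}_{\alpha,m}\cap z\tilde{U}_{\alpha,n}z^{-1})^*_\Phi \otimes \bigwedge (Z_{\max\{m,n\}})^*_\Phi \otimes \bigotimes_{\alpha \in \Phi_{\red}^+}\bigwedge (\tilde{U}_{\alpha,m}\cap z\tilde{U}_{\alpha,n}z^{-1})^*_\Phi;$$
the asserted formula for $H^i$ is the degree-$i$ component. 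The main obstacle is the Iwahori factorization of $J$: one must carefully combine both individual factorizations (one obtained by conjugation via \eqref{f:ZconjU}) with global Bruhat uniqueness to pin down each component as the intersection displayed. Once this is in hand, the Frattini quotient decomposition follows from a clean dimension count and Lazard gives the cohomology formula essentially automatically.
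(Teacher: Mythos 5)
Your proof is correct and follows essentially the same route as the paper's: deduce an Iwahori factorization of $K_m \cap zK_nz^{-1}$ with factors $\tilde{U}_{\alpha,m}\cap z\tilde{U}_{\alpha,n}z^{-1}$ and $Z_{\max\{m,n\}}$, pass to Frattini quotients using uniformity of all groups involved, and apply Lazard's exterior-algebra isomorphism. The two places where the paper is terse --- justifying the factorization itself, and the Frattini-quotient isomorphism (for which the paper cites the argument of \cite[Cor.~7.11]{OS19}) --- you have filled in explicitly via Bruhat uniqueness and a dimension count, and both justifications are sound.
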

\begin{proof}
Conjugating the Iwahori factorization of $K_n$ by $z$, using that $Z_n$ is normal in $Z$, and intersecting with $K_m$ we find that
\begin{equation}\label{iwa}
\big(\prod_{\alpha \in \Phi_{\red}^-} \tilde{U}_{\alpha,m}\cap z\tilde{U}_{\alpha,n}z^{-1}\big)  \times Z_{\max\{m,n\}} \times \big(\prod_{\alpha \in \Phi_{\red}^+} \tilde{U}_{\alpha,m}\cap z\tilde{U}_{\alpha,n}z^{-1} \big) \iso K_m \cap zK_nz^{-1}.
\end{equation}
By the same argument as in the proof of \cite[Cor.~7.11]{OS19}, which uses the uniformity of all groups involved, quotienting out $p$-powers induces an isomorphism on the level of Frattini quotients. Taking $k$-linear duals and forming the exterior algebra
gives an isomorphism of graded $k$-algebras
\begin{align*}
   & \bigwedge (K_m \cap zK_nz^{-1})_{\Phi}^* \xrightarrow{\ \simeq\ } \\
   & \bigwedge\bigg[ \bigoplus_{\alpha \in \Phi_{\red}^-} (\tilde{U}_{\alpha,m}\cap z\tilde{U}_{\alpha,n}z^{-1})_{\Phi}^* \oplus (Z_{\max\{m,n\}})_{\Phi}^* \oplus \bigoplus_{\alpha \in \Phi_{\red}^+} (\tilde{U}_{\alpha,m}\cap z\tilde{U}_{\alpha,n}z^{-1})_{\Phi}^*\bigg] \simeq  \\
   & \bigotimes_{\alpha \in \Phi_{\red}^-} \bigwedge (\tilde{U}_{\alpha,m}\cap z\tilde{U}_{\alpha,n}z^{-1})_{\Phi}^* \otimes
\bigwedge (Z_{\max\{m,n\}})_{\Phi}^* \otimes \bigotimes_{\alpha \in \Phi_{\red}^+} \bigwedge (\tilde{U}_{\alpha,m}\cap z\tilde{U}_{\alpha,n}z^{-1})_{\Phi}^* \ .
\end{align*}
Comparing the degree $i$ graded pieces gives the result.
\end{proof}

\subsection{On the vanishing of certain restriction maps}

We fix an $m \in e\N$. If $p=2$ we always assume $m>e$. For $n,n'\in e\N$ such that $n\leq n'$ we have restriction maps
$$
\res_{n,n'}^i(g): H^i(K_m \cap gK_ng^{-1},k) \longrightarrow H^i(K_m \cap gK_{n'}g^{-1},k).
$$
When $g=z\in Z$, this map is compatible with the decomposition in Prop.\ \ref{decompose} in the obvious sense. (In general restriction commutes with cup products in group cohomology.)

\begin{lemma}\label{resvan}
Suppose $m\leq n<n'$ all lie in $e\N$ (and are $>e$ if $p=2$). Then $\res_{n,n'}^i(g)=0$ for all $g \in G$ and
$i>i_0:=\dim_{\Bbb{Q}_p}(G/P_{\text{min}})$.
\end{lemma}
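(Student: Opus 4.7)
The plan is to exploit the tensor decomposition of Proposition~\ref{decompose} and to pinpoint which factors of $H^1$ are killed by $\res_{n,n'}^i$. First I reduce to the case $g = z \in Z^+$. Using the Cartan decomposition $G = K_0 \cdot Z^+ \cdot K_0$, write $g = k_1 z k_2$ with $k_i \in K_0$ and $z \in Z^+$. Since $K_m$, $K_n$, and $K_{n'}$ are all normal in $K_0$, we have
$$
K_m \cap g K_n g^{-1} = k_1 (K_m \cap z K_n z^{-1}) k_1^{-1}
$$
and likewise for $n'$, so conjugation by $k_1$ intertwines $\res_{n,n'}^i(g)$ with $\res_{n,n'}^i(z)$. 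It therefore suffices to treat $z \in Z^+$.

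For such a $z$, and since $n, n' \geq m$, Proposition~\ref{decompose} presents $H^i(K_m \cap zK_nz^{-1}, k)$ as a direct sum of tensor products of exterior powers of Frattini quotient duals, indexed by tuples $(\{a_\alpha\}_{\alpha \in \Phi_{\red}^-}, b, \{c_\alpha\}_{\alpha \in \Phi_{\red}^+})$ of total degree $i$. The Iwahori factorization \eqref{iwa} shows that the subgroup inclusion $K_m \cap z K_{n'} z^{-1} \subseteq K_m \cap z K_n z^{-1}$ is a product of inclusions on matching factors. Because the cohomology of a uniform pro-$p$ group is the exterior algebra on its $H^1 = (\cdot)_\Phi^*$, the restriction $\res_{n,n'}^i(z)$ is the graded algebra map determined by its action on $H^1$, and it is compatible with the tensor decomposition.

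The crucial computation is that this $H^1$-restriction vanishes on two classes of factors: the ones indexed by $\alpha \in \Phi_{\red}^+$ and the central factor $(Z_n)_\Phi^*$. For $\alpha \in \Phi_{\red}^+$: since $z \in Z^+$, $\langle \nu(z), \alpha \rangle \leq 0$, so $z \tilde{U}_{\alpha,n} z^{-1} = \tilde{U}_{\alpha, n - \langle \nu(z), \alpha \rangle} \subseteq \tilde{U}_{\alpha,n} \subseteq \tilde{U}_{\alpha,m}$; hence $\tilde{U}_{\alpha,m} \cap z \tilde{U}_{\alpha,n} z^{-1} = z \tilde{U}_{\alpha,n} z^{-1}$ and conjugation by $z$ intertwines the relevant restriction with $H^1(\tilde{U}_{\alpha,n}, k) \to H^1(\tilde{U}_{\alpha,n'}, k)$. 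Now \eqref{f:lower-series} gives $\tilde{U}_{\alpha,n}^p = \tilde{U}_{\alpha,n+e}$, and because $n, n' \in e\N$ with $n' > n$ force $n' \geq n + e$, we obtain $\tilde{U}_{\alpha,n'} \subseteq \tilde{U}_{\alpha,n}^p$; the dual of the induced map on Frattini quotients is therefore zero. The central case is identical, using $Z_n^p = Z_{n+e}$, and requires no conjugation since $z$ centralizes $Z$.

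Consequently any tensor summand with some $c_\alpha > 0$ for $\alpha \in \Phi_{\red}^+$, or with $b > 0$, is killed by $\res_{n,n'}^i(z)$. The surviving summands have $i = \sum_{\alpha \in \Phi_{\red}^-} a_\alpha$, each $a_\alpha$ bounded by $\dim_{\Q_p}(\tilde{U}_{\alpha,m} \cap z\tilde{U}_{\alpha,n}z^{-1}) \leq \dim_{\Q_p} \tilde{U}_{\alpha,m} = \dim_{\Q_p} U_\alpha$. The maximal surviving degree is thus
$$
\sum_{\alpha \in \Phi_{\red}^-} \dim_{\Q_p} U_\alpha = \dim_{\Q_p}(G/P_{\text{min}}) = i_0,
$$
so $\res_{n,n'}^i(g) = 0$ whenever $i > i_0$. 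The main technical care will go into (a) invoking a Cartan-type decomposition in the generality of a connected reductive $\F$-group, and (b) verifying cleanly that Lazard's exterior algebra isomorphism is natural enough for the restriction to split as a tensor product of one-factor restrictions compatible with the decomposition of Proposition~\ref{decompose}.
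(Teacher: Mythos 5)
Your proof follows the paper's argument essentially verbatim: reduce to $z\in Z^+$ via the Cartan decomposition and normality of the $K_n$ in $K_0$, apply Proposition~\ref{decompose}, kill the central and positive-root factors using the lower $p$-series identities $Z_n^p = Z_{n+e}$ and $\tilde{U}_{\alpha,n}^p = \tilde{U}_{\alpha,n+e}$, and bound the surviving degree by $\sum_{\alpha\in\Phi_{\red}^-}\dim_{\Q_p}U_\alpha = i_0$. Two minor imprecisions do not affect the argument: the containment $z\tilde{U}_{\alpha,n}z^{-1}\subseteq\tilde{U}_{\alpha,n-\langle\nu(z),\alpha\rangle}$ is only an inclusion (the $2\alpha$-part is filtered by $n-\langle\nu(z),2\alpha\rangle$, not $n-\langle\nu(z),\alpha\rangle$), and $z$ does not centralize $Z$ in general, but what the argument actually uses is the normality of $Z_r$ in $Z$, which holds.
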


\begin{proof}
By the Cartan decomposition $G=K_0Z^+K_0$, as described in \cite[Thm.~5.2.1, part (1)]{KP23} for example,
we may write $g=hzh'$ for some $z \in Z^+$ and $h,h' \in K_0$.
Since $K_ m$ and $K_n$ are both normal in $K_0$ we note that
$$
K_m \cap gK_ng^{-1}=h(K_m \cap z K_n z^{-1})h^{-1}.
$$
Therefore $x \mapsto hx$ induces isomorphisms $h_*$ on cohomology which fit in the following commutative diagram, where the horizontal maps are the restriction maps:
\begin{equation*}
  \xymatrix{
  H^i(K_m \cap gK_ng^{-1},k)  \ar[r] & H^i(K_m \cap gK_{n'}g^{-1},k) \\
  H^i(K_m \cap zK_nz^{-1},k)  \ar[u]^{h_*} \ar[r] & H^i(K_m \cap zK_{n'}z^{-1},k) \ar[u]_{h_*} .}
\end{equation*}
Fix an $i>i_0$. Our claim that $\res_{n,n'}^i(g)=0$ is therefore equivalent to $\res_{n,n'}^i(z)=0$, which we now proceed to show using the decomposition in Proposition \ref{decompose}.

Consider $\res_{n,n'}^i(z)$ restricted to the piece of $H^i(K_m \cap zK_nz^{-1},k)$ indexed by $\{a_{\alpha}\}_{\alpha\in \Phi_{\red}^-}$, $b$, $\{c_{\alpha}\}_{\alpha\in \Phi_{\red}^+}$. Observe that
\begin{itemize}
\item $\bigwedge^b (Z_{n})_{\Phi}^* \longrightarrow \bigwedge^b (Z_{n'})_{\Phi}^*$ vanishes for $b>0$ since $Z_{n'}\subseteq Z_{n+e}=(Z_n)^p$ by \eqref{f:lower-series}.
\item When $\alpha \in \Phi_{\red}^+$ we have $\langle \nu(z),\alpha\rangle\leq 0$. In particular
$$
z\tilde{U}_{\alpha,n}z^{-1}=U_{\alpha,n-\langle \nu(z),\alpha\rangle}\cdot U_{2\alpha,n-\langle \nu(z),2\alpha\rangle} \subseteq \tilde{U}_{\alpha,n-\langle \nu(z),\alpha\rangle} \subseteq
\tilde{U}_{\alpha,n} \subseteq \tilde{U}_{\alpha,m},
$$
so that
$$
\tilde{U}_{\alpha,m}\cap z\tilde{U}_{\alpha,n}z^{-1}=z\tilde{U}_{\alpha,n}z^{-1}.
$$
For such $\alpha$ the map in question is
$$
\bigwedge^{c_{\alpha}} (z\tilde{U}_{\alpha,n}z^{-1})_{\Phi}^*\longrightarrow \bigwedge^{c_{\alpha}} (z\tilde{U}_{\alpha,n'}z^{-1})_{\Phi}^*
$$
which vanishes for $c_{\alpha}>0$ since
$$
z\tilde{U}_{\alpha,n'}z^{-1} \subseteq z\tilde{U}_{\alpha,n+e}z^{-1}=(z\tilde{U}_{\alpha,n}z^{-1})^p,
$$
again by \eqref{f:lower-series}.
\end{itemize}
If $\res_{n,n'}^i(z)\neq 0$ there must be a piece of cohomology with $b=0$ and all $c_{\alpha}=0$ where $\res_{n,n'}^i(z)$ is nonzero. Since
$\sum_{\alpha \in \Phi_{\red}^-} a_{\alpha}=i$ and all $a_{\alpha}\leq \dim_{\Bbb{Q}_p} U_\alpha (=\dim_{\Bbb{Q}_p} (\tilde{U}_{\alpha,m} \cap z \tilde{U}_{\alpha,n} z^{-1}))$ we conclude that
$$
i \leq \sum_{\alpha \in \Phi_{\red}^-}\dim_{\Bbb{Q}_p} U_\alpha =\dim_{\Bbb{Q}_p}(G/P_{\text{min}})=i_0.
$$
See \cite[Sect.~21.11]{Bor} for the first equality.
\end{proof}

Next we show the above bound $i_0$ is sharp, as made precise in the result below.

\begin{lemma}\label{nonvan}
Given $m\leq n\leq n'$ in $e\N$ (all $>e$ if $p=2$) there exists $z \in Z^+$ satisfying
$$
-\langle \nu(z),\alpha\rangle\leq m-n', \: \: \: \forall \alpha \in \Phi^-.
$$
For such $z$, $\res_{n,n'}^{i}(z)\neq 0$ for all $i \leq i_0$.
\end{lemma}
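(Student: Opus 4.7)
My plan is first to produce the element $z \in Z^+$ and then to exhibit a nonzero summand of $H^i(K_m \cap zK_nz^{-1}, k)$ on which the restriction to $H^i(K_m \cap zK_{n'}z^{-1}, k)$ acts as the identity. For the existence, pick a cocharacter $\lambda \in X_*({\S})$ lying strictly in the positive Weyl chamber (i.e.\ $\langle \lambda, \alpha \rangle > 0$ for every $\alpha \in \Phi^+$) and set $z := \lambda(\pi)^N$. The defining formula of $\nu$ gives $\nu(z) = -N\lambda$, so $\langle \nu(z), \beta\rangle = -N\langle \lambda, \beta\rangle$ can be made $\leq m - n'$ for every $\beta \in \Phi^+$ by taking $N$ sufficiently large. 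Rewriting via $\alpha = -\beta \in \Phi^-$ yields both $z \in Z^+$ and the required inequality $-\langle\nu(z),\alpha\rangle \leq m - n'$.

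For the non-vanishing, apply Proposition \ref{decompose} to $K_m \cap zK_nz^{-1}$ and to $K_m \cap zK_{n'}z^{-1}$. Since restriction is a graded ring homomorphism for the cup product, it preserves the tensor factor decomposition --- this is the compatibility flagged right before Lemma \ref{resvan}. I then isolate the summand with $b = 0$, with $c_\alpha = 0$ for every $\alpha \in \Phi_{\red}^+$, and with a tuple $(a_\alpha)_{\alpha \in \Phi_{\red}^-}$ of integers satisfying $0 \leq a_\alpha \leq \dim_{\Bbb{Q}_p} U_\alpha$ and $\sum a_\alpha = i$. Such a tuple exists because $i \leq i_0 = \sum_{\alpha \in \Phi_{\red}^-} \dim_{\Bbb{Q}_p} U_\alpha$ by assumption, and the rank formula $\dim_k(\tilde{U}_{\alpha,m})_{\Phi}^* = \dim_{\Bbb{Q}_p} U_\alpha$ for Frattini quotients of uniform pro-$p$ groups ensures the associated tensor of exterior powers is a nonzero $k$-vector space.

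On this summand, the choice of $z$ combined with $n \leq n'$ gives $n - \langle\nu(z),\alpha\rangle \leq n' - \langle\nu(z),\alpha\rangle \leq m$ for every $\alpha \in \Phi^-$. By \eqref{f:ZconjU}, both $z\tilde{U}_{\alpha,n}z^{-1}$ and $z\tilde{U}_{\alpha,n'}z^{-1}$ then contain $\tilde{U}_{\alpha,m}$, so the intersections $\tilde{U}_{\alpha,m}\cap z\tilde{U}_{\alpha,n}z^{-1}$ and $\tilde{U}_{\alpha,m}\cap z\tilde{U}_{\alpha,n'}z^{-1}$ both collapse to $\tilde{U}_{\alpha,m}$. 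The restriction on the chosen summand is therefore the identity on the nonzero space $\bigotimes_{\alpha \in \Phi_{\red}^-}\bigwedge^{a_\alpha}(\tilde{U}_{\alpha,m})_{\Phi}^*$, and hence $\res_{n,n'}^i(z) \neq 0$. The main subtlety worth verifying is that restriction genuinely respects the tensor factor decomposition of Proposition \ref{decompose}; this reduces to the functoriality of Lazard's isomorphism together with the fact that the inclusion of groups respects the Iwahori factorizations and so induces the expected maps on each Frattini-quotient factor.
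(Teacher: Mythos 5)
Your proof is correct and follows essentially the same route as the paper: apply Proposition~\ref{decompose}, observe that for $z$ satisfying the stated inequalities all the negative-root intersections $\tilde{U}_{\alpha,m}\cap z\tilde{U}_{\alpha,n}z^{-1}$ and $\tilde{U}_{\alpha,m}\cap z\tilde{U}_{\alpha,n'}z^{-1}$ both collapse to $\tilde{U}_{\alpha,m}$, and hence restriction acts as an isomorphism on the chosen summand with $b=0$ and $c_\alpha=0$. The only cosmetic difference is that you realize $z$ as $\lambda(\pi)^N$ for a strictly dominant $\lambda$ and large $N$, whereas the paper takes $z=\mu(\pi)$ for a single dominant cocharacter $\mu$ with $\langle\mu,\alpha\rangle\leq m-n'$ on $\Phi^-$; both work, and both implicitly invoke the finite cokernel of $X^*(\mathbf{Z})\to X^*(\mathbf{S})$ to make sense of the pairing $\langle\nu(z),\alpha\rangle$ via valuations.
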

\begin{proof}
For the existence part, take $z:=\mu(\pi) \in S \subseteq Z$ where $\mu \in X_*({\S})^+$ is a dominant cocharacter satisfying
$\langle \mu,\alpha \rangle \leq m-n'$ for all $\alpha \in \Phi^-$. We note that $X^*({\Z})\rightarrow X^*({\S})$ has finite cokernel, so some multiple of $\alpha$
extends to an $\F$-rational character of ${\Z}$, and therefore $-\langle \nu(\mu(\pi)),\alpha\rangle=\v \alpha(\mu(\pi))=\langle \mu,\alpha \rangle$ by the defining property of $\nu$. We see that $z \in S \cap Z^+$.

For the non-vanishing part, we first work out the case $i=i_0$. Consider the contribution to $H^{i_0}(K_m \cap zK_nz^{-1},k)$ with indices $a_{\alpha}=\dim_{\Bbb{Q}_p} U_\alpha$ for all $\alpha \in \Phi_{\red}^-$ (and therefore $b=0$ and $c_{\alpha}=0$ for
$\alpha \in \Phi_{\red}^+$) which is the line
$$
\mathcal{L}_{z,n}:=\bigotimes_{\alpha \in \Phi_{\red}^-} \bigwedge^{\text{top}} (\tilde{U}_{\alpha,m}\cap z\tilde{U}_{\alpha,n}z^{-1})_{\Phi}^*.
$$
We are assuming $z \in Z^+$ satisfies the inequalities $-\langle \nu(z),\alpha\rangle\leq m-n'$ for \emph{all} $\alpha \in \Phi^-$ (even the non-reduced roots).
Under this assumption we have
$$
U_{\alpha,m}\subseteq z U_{\alpha,n'}z^{-1}\subseteq z U_{\alpha,n}z^{-1}
$$
since $m \geq n'-\langle \nu(z),\alpha\rangle$, and similarly for $2\alpha$ if it is a root. As a result
$$
\tilde{U}_{\alpha,m}\cap z\tilde{U}_{\alpha,n}z^{-1}=\tilde{U}_{\alpha,m}=\tilde{U}_{\alpha,m}\cap z\tilde{U}_{\alpha,n'}z^{-1}
$$
and consequently $\res_{n,n'}^{i_0}(z)$ maps $\mathcal{L}_{z,n}$ isomorphically to the line $\mathcal{L}_{z,n'}$ in
$H^{i_0}(K_m \cap zK_{n'}z^{-1},k)$. In particular $\res_{n,n'}^{i_0}(z)$ is nonzero on $\mathcal{L}_{z,n}$ for all such $z$.

For $i\leq i_0$ we generalize the argument from the previous paragraph as follows. Once and for all we write $i=\sum_{\alpha \in \Phi_{\red}^-}a_\alpha$ for a choice of integers $0 \leq a_\alpha \leq \dim_{\Bbb{Q}_p} U_\alpha$. One way of doing this is to list the roots, $\Phi_{\red}^-=\{\alpha_1,\alpha_2,\ldots\}$. Then let $q$ be the largest index for which
$$
\dim_{\Bbb{Q}_p} U_{\alpha_1}+\cdots+\dim_{\Bbb{Q}_p} U_{\alpha_q}\leq i.
$$
By convention $q:=0$ if $i<\dim_{\Bbb{Q}_p} \U_{\alpha_1}$. Now let $a_{\alpha_j}:=\dim_{\Bbb{Q}_p} U_{\alpha_j}$ for $j \leq q$. If $i<i_0$ we let
$$
a_{\alpha_{q+1}}:=i-(\dim_{\Bbb{Q}_p} U_{\alpha_1}+\cdots+\dim_{\Bbb{Q}_p} U_{\alpha_q}).
$$
If there are any remaining roots $\alpha_j$ with $j>q+1$ we declare that $a_{\alpha_j}:=0$.

Having chosen this expansion $i=\sum_{\alpha \in \Phi_{\red}^-}a_\alpha$, we introduce the following auxiliary subspace of $H^{i}(K_m \cap zK_nz^{-1},k)$:
$$
\mathcal{V}_{z,n}:=\bigotimes_{\alpha \in \Phi_{\red}^-} \bigwedge^{a_{\alpha}} (\tilde{U}_{\alpha,m}\cap z\tilde{U}_{\alpha,n}z^{-1})_{\Phi}^*.
$$
The restriction map $\res_{n,n'}^i(z)$ restricts to an isomorphism $\mathcal{V}_{z,n} \iso \mathcal{V}_{z,n'}$ for $z$ as above. In particular $\res_{n,n'}^{i}(z)\neq 0$ as claimed.
\end{proof}

\subsection{The proof of Theorem \ref{introrange}}

We can now prove our main result for $p$-adic reductive groups, which we recall here:

\begin{theorem}\label{main}
Fix an $m \in e\N$ ($>e$ if $p=2$) and let $i_0=\dim_{\Bbb{Q}_p}(G/P_{\text{min}})$ as above. Then:
\begin{itemize}
\item[(a)] $R^i \Ind_{K_m}^G(k)=0$ for all $i>i_0$;
\item[(b)] $R^i \Ind_{K_m}^G(k)\neq 0$ for all $i\leq i_0$.
\end{itemize}
\end{theorem}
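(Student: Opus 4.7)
The plan is to apply Proposition \ref{RInd} using the cofinal system of open subgroups $\{K_n\}_{n \in e\Bbb{N},\, n \geq m}$ (with $n > e$ if $p=2$); this is cofinal because the $K_n$ shrink to $\{1\}$. Writing
$$
L_n := \varprojlim_{x \in G/K_n} H^i(K_m \cap xK_nx^{-1}, k),
$$
we have $R^i \Ind_{K_m}^G(k) \cong \varinjlim_n L_n$, with transition maps $t_{n,n'}^i : L_n \to L_{n'}$ given componentwise by the restrictions $\res_{n,n'}^i(g)$ of Remark \ref{trans}.

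Part (a) is then immediate: for $i > i_0$, Lemma \ref{resvan} gives $\res_{n,n'}^i(g) = 0$ for every $g \in G$ and every $m \leq n < n'$, so every transition map vanishes and the colimit is zero.

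For part (b), fix $i \leq i_0$. The conceptual obstacle is that Lemma \ref{nonvan} produces a distinguished element $z \in Z^+$ separately for each $n'$, so I need to package these into a single element of $L_m$ whose image in every $L_{n'}$ survives. Using the product description $L_m \cong \prod_{x \in R} H^i(K_m \cap xK_mx^{-1}, k)$ from Remark \ref{represent}, where $R$ is a set of representatives for $K_m \backslash G / K_m$, I am free to prescribe each component. I would choose a sequence $(z_j)_{j \geq 1} \subset Z^+$ whose images $\nu(z_j)$ are pairwise distinct in $X_*({\mathbf S}) \otimes \Bbb{R}$ and satisfy $\min_{\alpha \in \Phi^-} \langle \nu(z_j), \alpha \rangle \to \infty$ (e.g.\ $z_j = \mu(\pi)^j$ for a strictly dominant cocharacter $\mu$). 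By the Cartan decomposition $G = K_0 Z^+ K_0$ together with $K_m \subset K_0$, the $z_j$ lie in distinct $K_m$-double cosets, so I may incorporate them into $R$.

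For each $j$, pick any nonzero vector $v_j$ in the subspace $\mathcal{V}_{z_j, m} \subset H^i(K_m \cap z_j K_m z_j^{-1}, k)$ introduced in the proof of Lemma \ref{nonvan}, and define $c \in L_m$ by $c_{z_j} := v_j$ and $c_x := 0$ for every other $x \in R$. Given any $n' \geq m$, choose $j$ large enough that $\langle \nu(z_j), \alpha \rangle \geq n' - m$ for all $\alpha \in \Phi^-$; Lemma \ref{nonvan} then applies to the triple $(m, m, n')$ with $z = z_j$, so $\res_{m,n'}^i(z_j)$ sends $\mathcal{V}_{z_j,m}$ isomorphically onto $\mathcal{V}_{z_j,n'}$. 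Consequently the component of $t_{m,n'}^i(c)$ at $z_j K_{n'}$ equals $\res_{m,n'}^i(z_j)(v_j) \neq 0$, hence $t_{m,n'}^i(c) \neq 0$ in $L_{n'}$. As $n'$ was arbitrary, $c$ represents a nonzero class in $R^i \Ind_{K_m}^G(k)$.
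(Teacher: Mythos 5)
Your argument is correct and follows essentially the same route as the paper's: part (a) is identical, and for part (b) the paper likewise builds a single class $c$ supported on Cartan representatives and shows it survives all transition maps via Lemma \ref{nonvan}. The only cosmetic differences are that the paper fixes an arbitrary $n \geq m$ in $e\mathbb{N}$ rather than $n=m$, prescribes $c$ as a $K_m$-equivariant function supported on $K_m\mathcal{X}K_n/K_n$ over \emph{all} of $\mathcal{X}$ (a set of representatives for $Z^+/Z^0$) instead of a cofinal sequence $z_j=\mu(\pi)^j$, and phrases the construction through the groupoid description of Remark \ref{trans} rather than the product description of Remark \ref{represent}; both formulations encode the same object.
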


\begin{proof}
To show the vanishing in part (a) suppose $i>i_0$. Let $n \in (e\N)_{\geq m}$ be arbitrary and consider any
$$
c \in {\varprojlim}_{g\in G/K_n} H^i(K_m \cap gK_ng^{-1}, k)
$$
as in Remark \ref{trans}. It suffices to show $t_{K_n,K_{n'}}^i(c)=0$ for all $n' \in (e\N)_{>n}$, which follows from Lemma \ref{resvan} since
$$
t_{K_n,K_{n'}}^i(c)_{gK_{n'}}=\res_{n,n'}^i(g)(c_{gK_n})=0
$$
for all $g \in G$. (We have used the formula for the transition maps $t_{K_n,K_{n'}}^i$ given in \ref{trans}.)

For the non-vanishing in part (b) now suppose $i\leq i_0$ and pick some $n \in (e\N)_{\geq m}$ once and for all. We will construct a nonzero class $c$ which survives all the transition maps. That is, such that $t_{K_n,K_{n'}}^i(c)\neq 0$ for all $n' \in (e\N)_{>n}$.

The class $c$ will be prescribed on a set of representatives $\mathcal{X}$ for $Z^+/Z^0$. (For comparison $Z^0$ is shorthand notation for the subgroup denoted by $Z(\F)^0$ in \cite[Df.~2.6.23]{KP23}.) We recall the Cartan decomposition $G=\bigsqcup_{z\in \mathcal{X}} K_0zK_0$.

For every $z \in \mathcal{X}$ we once and for all select a nonzero vector
$v_z \in \mathcal{V}_{z,n}$ (recall the notation introduced in the proof of Lemma \ref{nonvan}). Corresponding to these data there is a unique groupoid cohomology class
$$
c: G/K_n \longrightarrow {\bigoplus}_{g\in G/K_n} H^i(K_m \cap gK_ng^{-1}, k)
$$
with the following properties:
\begin{itemize}
\item $c$ is $K_m$-equivariant (in the sense described in Remark \ref{trans});
\item $c$ is supported on $K_m\mathcal{X}K_n/K_n$;
\item $c_{zK_n}=v_z$ for all $z \in \mathcal{X}$.
\end{itemize}
The uniqueness of $c$ is clear. For its existence let $g \in K_m\mathcal{X}K_n$ and write $g=hzh'$ accordingly. Thus
$h\in K_m$, $h' \in K_n$, and $z \in \mathcal{X}$. By the Cartan decomposition the factor $z$ is uniquely determined, and the left coset $h(K_m \cap zK_nz^{-1})$ is independent of the factorization of $g$. Now let $c_{gK_n}=h_*(v_z)$ which is therefore well-defined.

To see this $c$ has the desired property let $n' \in (e\N)_{>n}$. Our task is to verify the tuple $t_{K_n,K_{n'}}^i(c)$ has at least one nonzero component.
Take $z \in \mathcal{X}$ to be an element such that $-\langle \nu(z),\alpha\rangle\leq m-n'$ for all $\alpha \in \Phi^-$.
Then by (the proof of) Lemma \ref{nonvan} we know that $\res_{n,n'}^{i}(z)$ restricts to an isomorphism $\mathcal{V}_{z,n} \iso \mathcal{V}_{z,n'}$. In particular
$$
t_{K_n,K_{n'}}^i(c)_{zK_{n'}}=\res_{n,n'}^i(z)(c_{zK_n})=\res_{n,n'}^i(z)(v_z)\neq 0
$$
which finishes the proof.
\end{proof}

We finish this section by emphasizing an application to duality, which essentially reproves and strengthens one of the main results from \cite{Sor}.

\begin{corollary}\label{Sind}
Let $m \in e\N$ ($m>e$ if $p=2$). Then $\underline{\Ext}^i(\ind_{K_m}^G k,k)=0 \Longleftrightarrow i>i_0$.
\end{corollary}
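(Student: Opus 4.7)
The plan is to apply Corollary \ref{degen} with $V = k$ and then invoke Theorem \ref{main}. Since $k$ is one-dimensional, it is trivially finite-dimensional and self-dual as a $K_m$-representation, i.e.\ $k^* = \Hom_k(k,k) = k$. Therefore Corollary \ref{degen} produces a canonical isomorphism
$$
R^i\Ind_{K_m}^G(k) \iso \underline{\Ext}^i(\ind_{K_m}^G k,k)
$$
for every $i \geq 0$. The right-hand side thus vanishes in exactly the same range of degrees as the left-hand side.

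Next, I would appeal to Theorem \ref{main} (which we are already allowed to use), whose two parts together say precisely that $R^i\Ind_{K_m}^G(k) = 0$ if and only if $i > i_0$. Combining this with the isomorphism above gives the claimed biconditional $\underline{\Ext}^i(\ind_{K_m}^G k,k) = 0 \iff i > i_0$.

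There is no real obstacle here: the only content of the corollary beyond Theorem \ref{main} is the translation provided by Corollary \ref{degen}, and both results are already in place. The one point worth flagging is simply that the hypotheses of Corollary \ref{degen} (finite-dimensionality of $V$) are trivially met for $V = k$, and that $k^* = k$ makes the formula specialize cleanly.
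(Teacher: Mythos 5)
Your proof is correct and matches the paper's own argument, which likewise notes that Corollary \ref{degen} (applied to the trivially finite-dimensional and self-dual $V = k$) turns the claim into a restatement of Theorem \ref{main}. No further comment is needed.
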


\begin{proof}
Due to Corollary \ref{degen} this is just a restatement of Theorem \ref{main}.
\end{proof}

\subsection{The proof of Theorem \ref{intromain}}

By assumption $\G$ is nontrivial and connected. Hence the group $Z$ has positive $\Q_p$-dimension. To show $\Mod_k(G)$ has no nonzero projective objects,
the proof of Proposition \ref{proj} applies, noting that $R^d\Ind_{K_m}^G(k)=0$. Indeed, by  \eqref{f:Iwahori-fac-Km} for instance,
$$
d = \sum_{\alpha \in \Phi_{\red}^-}\dim_{\Bbb{Q}_p} U_\alpha+
\dim_{\Q_p} Z +
\sum_{\alpha \in \Phi_{\red}^+}\dim_{\Bbb{Q}_p} U_\alpha =
2i_0+\dim_{\Q_p} Z>i_0 \ .
$$
We immediately deduce from Theorem \ref{main} that $R^d\Ind_{K_m}^G(k)=0$.

\begin{remark}
There is a quicker route to Theorem \ref{intromain} by directly showing $R^d\Ind_{K_m}^G(k)=0$. This amounts to verifying the condition appearing in the proof of Proposition \ref{proj}. Indeed, for all large enough $n$ there is an $n'>n$ (for instance $n'=n+1$) such that we have a \emph{strict} inclusion
$$
K_m \cap gK_{n'}g^{-1} \subsetneq K_m \cap gK_{n}g^{-1}
$$
for all $g \in G$. Indeed, by the Cartan decomposition it suffices to check this for $g=z \in Z^+$. For such $z$ the strict inclusion follows from the factorization (\ref{iwa}) by noting that
$Z_{n'} \subsetneq Z_n$.
\end{remark}

\section{The derived functor $R\underline{\Ind}$}

\subsection{Preliminary remarks}

We return to the general setup and let $G$ be an arbitrary $p$-adic Lie group of $\Bbb{Q}_p$-dimension $d$, and $k$ is still a field of characteristic $p$. Recall that $\Mod_k(G)$ is the abelian category of smooth $G$-representations on $k$-vector spaces. We let $D(G)$ denote its (unbounded) derived category.

We remind the reader that our convention is that $G$ acts on $\Ind_K^G(V)$ by right translations. Thus, for any $V$ in $\Mod_k(K)$, the space $\Ind_K^G(V)$ consists of all
functions $f:G \longrightarrow V$ such that

\begin{itemize}
  \item $f(\kappa g) = \kappa f(g)$ for any $g \in G$ and $\kappa \in K$;
  \item $f(gu) = f(g)$ for any $g \in G$ and $u \in U$ for some open $U \leq G$ (depending on $f$).
\end{itemize}

The fact that $\Ind_K^G$ is right adjoint to restriction is a consequence of Frobenius reciprocity (see \cite[I.5.7(i)]{Vig96} for example) which is the isomorphism
\begin{align}\label{f:adj-K}
  \Hom_{\Mod_k(K)}(W,V) & \xrightarrow{\;\cong\;} \Hom_{\Mod_k(G)}(W, \Ind_K^G(V)) \\
     A & \longmapsto B(v)(g) := A(gv)     \nonumber
\end{align}
for $V$ in $\Mod_k(K)$ and $W$ in $\Mod_k(G)$. The functor $\Ind_K^G$ is left exact and preserves injective objects (see \cite[I.5.9]{Vig96}). If $K' \leq K$ then $\Ind_K^G(V) \subseteq \Ind_{K'}^G(V)$. We may therefore take the union over $K$ and introduce the left exact functor
\begin{align*}
  \underline{\Ind} : \Mod_k(G) & \longrightarrow \Mod_k(G) \\
                          V & \longmapsto \varinjlim_K \Ind_K^G(V) \ .
\end{align*}
Later we will endow $\underline{\Ind}(V)$ with a smooth $G \times G$-action, provided $V$ is in $\Mod(G)$. We are interested in the derived functors $R^i \underline{\Ind}$ and their relation to $R^i\Ind_K^G$.

\begin{lemma}\label{lemma:Ind=lim}
  $R^i\underline{\Ind}(V) = \varinjlim_K R^i\Ind_K^G(V)$.
\end{lemma}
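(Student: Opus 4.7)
The plan is to reduce the computation to a single injective resolution in $\Mod_k(G)$ and then move the filtered colimit through cohomology using AB5.

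First I would pick an injective resolution $V \longrightarrow J^{\bullet}$ in $\Mod_k(G)$. The key preliminary observation is that for every compact open subgroup $K \subseteq G$, the restriction $J^{\bullet}|_K$ is still an injective resolution of $V|_K$ in $\Mod_k(K)$: restriction admits the exact left adjoint $\ind_K^G$, so it preserves injectives (this was already used in the proof of Proposition \ref{RInd}). Consequently
\[
R^i\Ind_K^G(V) \simeq h^i\bigl(\Ind_K^G(J^{\bullet})\bigr)
\]
for every such $K$, computed uniformly from the same resolution.

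Next I would recall that the index set of compact open subgroups of $G$, ordered by reverse inclusion, is filtered (any two $K,K'$ are dominated by $K \cap K'$), and that for $K' \subseteq K$ the natural inclusion $\Ind_K^G(J^n) \hookrightarrow \Ind_{K'}^G(J^n)$ is a morphism of complexes. By definition $\underline{\Ind}(J^{\bullet}) = \varinjlim_K \Ind_K^G(J^{\bullet})$. Since $\Mod_k(G)$ satisfies AB5, taking $h^i$ commutes with the filtered colimit, giving
\[
\varinjlim_K R^i\Ind_K^G(V) \simeq \varinjlim_K h^i\bigl(\Ind_K^G(J^{\bullet})\bigr) \simeq h^i\bigl(\underline{\Ind}(J^{\bullet})\bigr).
\]

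Finally, to identify the right-hand side with $R^i\underline{\Ind}(V)$ it suffices to note that $\underline{\Ind}$ is left exact and that each $J^n$, being injective in $\Mod_k(G)$, is acyclic for $\underline{\Ind}$; this is automatic from the definition of a right derived functor, since $J^{\bullet}$ is an injective resolution of $V$. Hence $h^i(\underline{\Ind}(J^{\bullet})) \simeq R^i\underline{\Ind}(V)$, which combined with the previous display yields the claim. There is no real obstacle here; the only point one has to pay attention to is the compatibility of restriction with injectivity, so that a single resolution $J^{\bullet}$ in $\Mod_k(G)$ can be used to compute all $R^i\Ind_K^G(V)$ simultaneously and make the colimit argument go through cleanly.
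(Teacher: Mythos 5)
Your proof is correct and follows the same path as the paper's: resolve $V$ by injectives in $\Mod_k(G)$, observe that restriction to any compact open $K$ preserves injectives (exact left adjoint $\ind_K^G$), so a single resolution computes every $R^i\Ind_K^G(V)$, and then pass $h^i$ through the filtered colimit using AB5. The paper states this in one sentence; you have simply unpacked it.
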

\begin{proof}
This is immediate from the exactness of inductive limits and the fact that any injective object in $\Mod_k(G)$ remains injective in any $\Mod_k(K)$.
(Indeed, the left adjoint of restriction is compact induction, which is exact since $k[G]$ is flat over $k[K]$.)
\end{proof}

As a preliminary observation, we prove that $R^i \underline{\Ind}$ vanishes for $i>d$:

\begin{proposition}\label{prop:Ind-fcd}
The following holds:
\begin{itemize}
  \item[i.] For any compact open subgroup $K \subseteq G$, the functor $\Ind_K^G$ has cohomological dimension at most $d$.
  \item[ii.] The functor $\underline{\Ind}$ has cohomological dimension at most $d$.
\end{itemize}
\end{proposition}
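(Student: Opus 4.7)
The plan is to deduce both statements from the explicit formula for $R^i\Ind_K^G(V)$ established in Proposition \ref{RInd}, combined with the classical Serre--Lazard result that a torsion-free compact $p$-adic analytic group of dimension $d'$ has mod-$p$ cohomological dimension equal to $d'$.

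For part (i), I will start from the isomorphism
$$
R^i\Ind_K^G(V) \simeq \varinjlim_U \varprojlim_{x\in G/U} H^i(K \cap xUx^{-1}, V)
$$
of Proposition \ref{RInd} and exploit the cofinality of uniform pro-$p$ open subgroups among all open subgroups of $G$ (every open subgroup of a $p$-adic Lie group contains an open uniform pro-$p$ subgroup, by Lazard). Given any open $U$ and any class $c$ in the corresponding projective limit, I will choose an open uniform pro-$p$ subgroup $U' \subseteq U$ and show that the transition map $t^i_{U,U'}$ of Remark \ref{trans} sends $c$ to zero because the \emph{target} limit is already zero. Indeed, for every $x \in G$ the intersection $K \cap xU'x^{-1}$ is an open subgroup of the uniform (hence torsion-free) pro-$p$ group $xU'x^{-1}$, so it is itself a torsion-free compact $p$-adic analytic group of $\Bbb{Q}_p$-dimension at most $d$. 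The Serre--Lazard theorem then forces $H^i(K \cap xU'x^{-1}, V) = 0$ for $i > d$ and any smooth $k$-representation $V$, so the target projective limit vanishes and the colimit is zero in degrees $i>d$.

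Part (ii) will then follow at once from part (i) and Lemma \ref{lemma:Ind=lim}: since
$$
R^i \underline{\Ind}(V) = \varinjlim_K R^i \Ind_K^G(V),
$$
every term on the right is zero for $i > d$.

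The only step requiring mild care is the identification of the cohomology appearing in Proposition \ref{RInd} with genuine smooth group cohomology of $K \cap xU'x^{-1}$, so that Serre--Lazard is applicable. This is built into the proof of Proposition \ref{RInd} itself: if $V \hookrightarrow J^\bullet$ is an injective resolution in $\Mod_k(K)$, then its restriction remains injective in $\Mod_k(K \cap xU'x^{-1})$ (compact induction is exact), so $h^i\bigl((J^\bullet)^{K \cap xU'x^{-1}}\bigr)$ is indeed the $i$-th right derived functor of smooth invariants, to which the cohomological dimension bound applies. I expect no further obstacle.
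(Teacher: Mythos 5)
Your proof is correct and is essentially the route the paper acknowledges in the opening sentence of its own proof: deduce the bound from Proposition \ref{RInd} by restricting the colimit to a cofinal family of torsion-free open subgroups (uniform pro-$p$ in your case, Poincar\'e in the paper's) and apply the Lazard--Serre cohomological dimension bound $d$ to each $K \cap xU'x^{-1}$. The paper then prefers a self-contained variant that rederives the Mackey decomposition of Proposition \ref{RInd} via the composed-functor spectral sequence for $(-)^U$ and $\Ind_K^G$ rather than citing that proposition outright, but the underlying ideas are identical.
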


\begin{proof}
This is a consequence of Proposition \ref{RInd}, but we prefer to include the following direct argument (which also gives an alternate proof of the derived Mackey decomposition \ref{RInd}).

i. Consider any $V$ in $\Mod_k(K)$ and let $U \subseteq G$ be a compact open subgroup. We choose a set $R \subseteq G$ of representatives for the double cosets
$K \backslash G/U$. The Mackey decomposition (see \cite[I.5.5]{Vig96} for example) is a natural isomorphism
\begin{align*}
  \Ind_K^G(V)^{U} & \xrightarrow{\;\cong\;} \prod_{x \in R} V^{K \cap xUx^{-1}}   \\
        f & \longmapsto (f(x))_{x\in R} \ .
\end{align*}
Let $V \overset{\text{qis}}{\longrightarrow} \mathcal{J}^\bullet$ be a choice of injective resolution in $\Mod_k(K)$. On the one hand it remains an injective resolution in any $\Mod_k(K \cap xUx^{-1})$. Hence the complex $\prod_{x \in R} (\mathcal{J}^\bullet)^{K \cap xUx^{-1}}$ computes $\prod_{x \in R} H^*(K \cap xUx^{-1},V)$. On the other hand the complex $\Ind_K^G(\mathcal{J}^\bullet)$ computes $R^*\Ind_K^G(V)$ and is a complex of injective objects in $\Mod_k(G)$. Therefore the composed functor spectral sequence for the functors $(-)^{U}$ and $\Ind_K^G(-)$ exists and reads
\begin{equation*}
  E_2^{r,s} = H^r(U, R^s\Ind_K^G(V)) \Longrightarrow \prod_{x \in R} H^{r+s}(K \cap xUx^{-1},V) \ .
\end{equation*}
By passing to the limit with respect to $U$ it degenerates into isomorphisms
\begin{equation}\label{f:derivedMackey}
  R^s\Ind_K^G(V) \cong \varinjlim_{U} \prod_{x \in R} H^s(K \cap xUx^{-1},V) \ .
\end{equation}
For this note that profinite group cohomology commutes with inductive limits and that, for any $M$ in $\Mod_k(K)$, we have
\begin{equation*}
  \varinjlim_{U} H^r(U,M) =
  \begin{cases}
  M & \text{if $r=0$},  \\
  0 & \text{otherwise}.
  \end{cases}
\end{equation*}
In the limit (\ref{f:derivedMackey}) we may take $U$ to run over Poincar\'e subgroups of $G$. However, with $U$ the open subgroup $K \cap xUx^{-1}$ is also a Poincar\'e group of dimension $d$. Therefore all the cohomology groups on the right-hand side of \eqref{f:derivedMackey} vanish for $s>d$.

ii. Because of Lemma \ref{lemma:Ind=lim} this follows from i.
\end{proof}

\begin{remark}\label{rem:Ind-exact}
   If $G$ is compact then the functors $\Ind_K^G$ and $\underline{\Ind}$ are exact.
\end{remark}
\begin{proof}
For compact $G$ we have $\Ind_K^G = \ind_K^G=\text{compact induction}$, which is exact.
\end{proof}

As a consequence of Prop.\ \ref{prop:Ind-fcd}.ii, $\underline{\Ind}$ has finite cohomological dimension, and we therefore have (by \cite[Cor.~I.5.3($\gamma$)]{Har}) the total derived functor between the unbounded derived categories:
\begin{equation*}
  R\underline{\Ind} : D(G) \longrightarrow D(G) \ .
\end{equation*}
This functor has more structure, as we now explain. In the following we use the convention that for a $G \times G$-action we write $G_\ell$, resp.\ $G_r$, if we refer to the action of the left, resp.\ right, factor in the product $G \times G$.

\begin{lemma}\label{lemma:GG}
  For any two representations $V$ and $V'$ in $\Mod_k(G)$ we have
\begin{itemize}
  \item[i.] For $f \in \Ind_K^G(V')$ and $(g_1,g_2) \in G \times G$ the function
  $$
  {^{(g_1,g_2)} f}(g) := g_1 f(g_1^{-1}gg_2)
  $$
  lies in $\Ind_{g_1 K g_1^{-1}}^G(V')$;
  \item[ii.] this defines a smooth $G \times G$-action on $\underline{\Ind}(V')$; more precisely, $\Ind_K^G(V') = \underline{\Ind}(V')^{K_\ell}$;
  \item[iii.] the earlier $G$-action on $\underline{\Ind}(V')$ is the $G_r$-action;
  \item[iv.] the adjunction isomorphism
\begin{equation}\label{f:adj}
     \underline{\Hom}(V,V') \xrightarrow{\;\cong\;} \underline{\Hom}_{\Mod_k(G_r)}(V,\underline{\Ind}(V')):=\varinjlim_K  \Hom_{\Mod_k(G_r)}(V,\underline{\Ind}(V'))^{K_\ell}
\end{equation}
   obtained by passing to the inductive limit with respect to $K$ in \eqref{f:adj-K} is $G$-equivariant where $G$ acts on the target through the $G_\ell$-action on $\underline{\Ind}(V')$;
  \item[v.] if the representation $V$ is finitely generated then
\begin{equation*}
  \underline{\Hom}_{\Mod_k(G_r)}(V,\underline{\Ind}(V')) = \Hom_{\Mod_k(G_r)}(V,\underline{\Ind}(V')) \ .
\end{equation*}
\end{itemize}
\end{lemma}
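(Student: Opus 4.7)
The plan is to verify each part by direct substitution, tracking left versus right actions carefully, then deduce (iv) from ordinary Frobenius reciprocity \eqref{f:adj-K} combined with (ii), and settle (v) by exploiting finite generation against the smoothness of the $G\times G$-action on $\underline{\Ind}(V')$.

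Parts (i)--(iii) are substitutions. For (i), given $\kappa' = g_1 \kappa g_1^{-1} \in g_1 K g_1^{-1}$, a direct calculation gives
$$
{^{(g_1,g_2)}f}(\kappa' g) = g_1 f(\kappa g_1^{-1} g g_2) = g_1 \kappa f(g_1^{-1} g g_2) = \kappa'\, {^{(g_1,g_2)}f}(g);
$$
right-smoothness carries over by replacing the smoothness subgroup $U$ of $f$ by $g_2 U g_2^{-1}$. For (ii), the associativity ${^{(g_1',g_2')}({^{(g_1,g_2)}f})} = {^{(g_1'g_1,\, g_2'g_2)}f}$ is a short check, confirming this is a left $G \times G$-action. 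For the identification $\Ind_K^G(V') = \underline{\Ind}(V')^{K_\ell}$, taking $(g_1,g_2) = (\kappa,1)$ gives ${^{(\kappa,1)}f}(g) = \kappa f(\kappa^{-1} g)$, which equals $f(g)$ for all $g$ exactly when $f$ satisfies the $K$-transformation property; combined with smoothness (which any $f \in \underline{\Ind}(V')$ already enjoys, being in some $\Ind_{K'}^G(V')$), this is exactly membership in $\Ind_K^G(V')$. Smoothness of the $G \times G$-action then follows, since $f \in \Ind_K^G(V')$ is fixed by $K_\ell \times (g_2 U g_2^{-1})_r$ for $g_2 \in G$ and the smoothness subgroup $U$ of $f$. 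Part (iii) is the tautology ${^{(1,g)}f}(h) = f(hg)$.

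For (iv), combining \eqref{f:adj-K} with (ii) yields
$$
\Hom_{\Mod_k(K)}(V,V') \cong \Hom_{\Mod_k(G_r)}(V, \Ind_K^G(V')) = \Hom_{\Mod_k(G_r)}(V, \underline{\Ind}(V'))^{K_\ell},
$$
and passing to $\varinjlim_K$ gives the stated isomorphism. For $G$-equivariance, write $B(v)(h) = A(hv)$ for the image of $A \in \underline{\Hom}(V,V')$. Acting by $g$ on the source, the conjugation action gives $(g\cdot A)(v) = g A(g^{-1}v)$, and the corresponding map under the adjunction is $v \mapsto [h \mapsto g A(g^{-1} h v)]$. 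On the other side, the $G_\ell$-action gives $(g \cdot B)(v)(h) = {^{(g,1)}(B(v))}(h) = g B(v)(g^{-1} h) = g A(g^{-1} h v)$, matching.

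For (v), write $V = \sum_{i=1}^n k[G] v_i$ and use smoothness of the $G\times G$-action on $\underline{\Ind}(V')$ to pick compact opens $K_i$ with $A(v_i) \in \underline{\Ind}(V')^{(K_i)_\ell}$. Setting $K := \bigcap_i K_i$, for any $g \in G$ the element $A(gv_i) = g_r A(v_i)$ is still $K_\ell$-fixed because $K_\ell$ commutes with $G_r$ in $G \times G$; by linearity $K_\ell$ fixes all of $A$, placing it in the colimit defining $\underline{\Hom}_{\Mod_k(G_r)}$. The main obstacle in this proof is not any single computation but rather consistently tracking the bookkeeping of left versus right actions, and in particular noticing in (iv) that the conjugation $G$-action on $\underline{\Hom}(V,V')$ corresponds to the $G_\ell$-action (not $G_r$) on $\underline{\Ind}(V')$.
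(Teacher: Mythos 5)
Your proposal is correct and follows the same route as the paper's proof: the substitution check for (i), the identification $\Ind_K^G(V')=\underline{\Ind}(V')^{K_\ell}$ for (ii), the tautology for (iii), combining \eqref{f:adj-K} with (ii) and passing to the colimit for (iv), and the finite-generation argument for (v). Where the paper simply says ``a straightforward computation shows its equivariance'' (for iv) and condenses (v) to a single sentence, you spell out both checks explicitly; the only slight blemish is the parenthetical mention of $g_2$ and $g_2Ug_2^{-1}$ in the smoothness-of-the-action sentence in (ii), where the relevant stabilizer of $f$ itself is just $K_\ell\times U_r$, but this does not affect correctness.
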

\begin{proof}
i. Suppose $f$ is fixed by right translation by $U$. Then ${^{(g_1,g_2)} f}$ is fixed by right translation by $g_2 U g_2^{-1}$. Furthermore, for $\kappa \in K$, we compute
\begin{equation*}
  {^{(g_1,g_2)} f}((g_1\kappa g_1^{-1})g) = g_1 f(g_1^{-1} (g_1\kappa g_1^{-1})g g_2) =g_1\kappa f(g_1^{-1}g g_2)=(g_1\kappa g_1^{-1})\cdot {^{(g_1,g_2)} f}(g) \ .
\end{equation*}
Thus ${^{(g_1,g_2)} f} \in \underline{\Ind}(V')^{g_1Kg_1^{-1} \times g_2 U g_2^{-1}}$. Now ii and iii are obvious.

iv. For a fixed $K$ we have by \eqref{f:adj-K} and ii:
\begin{align*}
  \Hom_{\Mod_k(K)}(V,V') & \xrightarrow{\cong} \Hom_{\Mod_k(G)}(V,\Ind_K^G(V'))  \\
   & = \Hom_{\Mod_k(G)}(V,\underline{\Ind}(V')^{K_\ell })      \\
   & = \Hom_{\Mod_k(G_r)}(V,\underline{\Ind}(V'))^{K_\ell}
\end{align*}
which in the limit with respect to $K$ gives rise to the isomorphism \eqref{f:adj}. A straightforward computation shows its equivariance. (We refer to \cite[p.~32]{SS} for the definition of the smooth linear maps $\underline{\Hom}(V,V')$.)

v. Under the finiteness assumption, the image of any $G$-homomorphism from $V$ into $\underline{\Ind}(V')$ lies in $\Ind_K^G(V')$ for some open $K \leq G$ (which depends on the homomorphism).
\end{proof}

Hence we actually have a left exact functor $\underline{\Ind} : \Mod_k(G) \longrightarrow \Mod_k(G \times G)$ which derives to a functor  $R\underline{\Ind} : D(G) \longrightarrow D(G \times G)$ computable by homotopically injective resolutions. Our next goal is to lift the adjunction \eqref{f:adj} to the derived level, using \cite[Thm.~14.4.8]{KS}.
For this we first have to discuss its right-hand side in more detail.

We begin by recalling some general nonsense about the adjunction between tensor products and the $\Hom$-functor which for three $k$-vector spaces $V_1$, $V_2$, and $V_3$ is given by the linear isomorphism
\begin{align}\label{f:basic-adj}
  \Hom_k(V_1 \otimes_k V_2, V_3) & \xrightarrow{\;\cong\;} \Hom_k(V_1, \Hom_k(V_2,V_3)) \\
                  A & \longmapsto \lambda_A(v_1)(v_2) := A (v_1 \otimes v_2) \ .   \nonumber
\end{align}
Suppose that all three vector spaces carry a left $G$-action. Then $\Hom_k(V_1 \otimes_k V_2, V_3)$ and $\Hom_k(V_1, \Hom_k(V_2,V_3))$ are equipped with the $G \times G \times G$-action defined by
\begin{equation*}
  {^{(g_1,g_2,g_3)} A}(v_1 \otimes v_2) := g_3 A(g_1^{-1} v_1 \otimes g_2^{-1} v_2)  \quad\text{and}\quad   {^{(g_1,g_2,g_3)} \lambda}(v_1)(v_2) := g_3 (\lambda(g_1^{-1} v_1)(g_2^{-1} v_2))  ,
\end{equation*}
respectively. The above adjunction is equivariant for these two actions. If we restrict to the diagonal $G$-action, then the above adjunction induces the adjunction isomorphism
\begin{equation*}
  \Hom_{k[G]}(V_1 \otimes_k V_2, V_3) \xrightarrow{\;\cong\;} \Hom_{k[G]}(V_1, \Hom_k(V_2,V_3)) \ .
\end{equation*}
If the $G$-action on the $V_i$ is smooth then this can also be written as an isomorphism
\begin{equation}\label{f:smooth-adj}
  \Hom_{\Mod_k(G)}(V_1 \otimes_k V_2, V_3) \cong \Hom_{\Mod_k(G)}(V_1, \underline{\Hom}(V_2,V_3)) \ .
\end{equation}
In the next paragraph we discuss a variant of this.

Now suppose that in the adjunction \eqref{f:basic-adj} the vector spaces $V_1$ and $V_2$ carry a $G$-action whereas $V_3$ carries a $G \times G$-action. Then $V_1 \otimes_k V_2$ carries a $G \times G$-action as well. Moreover, $\Hom_{k[G_r]}(V_2,V_3)$ still carries a $G$-action through the $G_\ell$-action on $V_3$. The above adjunction induces the adjunction isomorphism
\begin{equation*}
  \Hom_{k[G \times G]}(V_1 \otimes_k V_2, V_3) \xrightarrow{\;\cong\;} \Hom_{k[G_\ell]}(V_1, \Hom_{k[G_r]}(V_2,V_3)) \ .
\end{equation*}
Suppose in addition that the actions on the $V_i$ are smooth. Then the $G_\ell$-action on
\begin{equation*}
  \underline{\Hom}_{\Mod_k(G_r)}(V_2,V_3) := \varinjlim_{K} \Hom_{\Mod_k(G_r)}(V_2,V_3)^{K_\ell} \ ,
\end{equation*}
where $K$ runs over the compact open subgroups, is smooth. Hence we may rewrite the latter isomorphism as
\begin{equation*}
  \Hom_{\Mod_k(G \times G)}(V_1 \otimes_k V_2, V_3) \cong \Hom_{\Mod_k(G_\ell)}(V_1, \underline{\Hom}_{\Mod_k(G_r)}(V_2,V_3)) \ .
\end{equation*}
This works as well with $V_1$ and $V_2$ interchanged:

\begin{remark}
We could have defined the initial adjunction for vector spaces analogously by the linear isomorphism
\begin{align*}
  \Hom_k(V_1 \otimes_k V_2, V_3) & \xrightarrow{\;\cong\;} \Hom_k(V_2, \Hom_k(V_1,V_3)) \\
                  A & \longmapsto \mu_A(v_2)(v_1) := A (v_1 \otimes v_2) \ .
\end{align*}
This leads to the isomorphism
\begin{equation*}
  \Hom_{\Mod_k(G \times G)}(V_1 \otimes_k V_2, V_3) \cong \Hom_{\Mod_k(G_r)}(V_2, \underline{\Hom}_{\Mod_k(G_\ell)}(V_1,V_3)) \ .
\end{equation*}
\end{remark}

We are now in a position to apply \cite[Thm.~14.4.8]{KS}. For our present context we conclude that the functor
\begin{align*}
  \Mod_k(G)^{op} \times \Mod_k(G \times G) & \longrightarrow \Mod_k(G)  \\
  (V,V') & \longmapsto \underline{\Hom}_{\Mod_k(G_r)}(V,V')
\end{align*}
has the left adjoint functor
\begin{align*}
  \Mod_k(G) \times \Mod_k(G) & \longrightarrow \Mod_k(G \times G)  \\
  (V,V') & \longmapsto V \otimes_k V'
\end{align*}
This shows that the adjointness requirements in \cite[Thm.~14.4.8]{KS} are satisfied, so that we have the total derived functor
\begin{equation*}
  R\underline{\Hom}_{\Mod_k(G_r)} : D(G)^{op} \times D(G \times G) \longrightarrow D(G)
\end{equation*}
satisfying (14.4.6) in \cite{KS}. Namely, for $V_1^\bullet$, $V_2^\bullet$ in $D(G)$ and $V_3^\bullet$ in $D(G\times G)$ we have isomorphisms
\begin{align*}
R\Hom_{\Mod_k(G \times G)}(V_1^\bullet \otimes_k V_2^\bullet, V_3^\bullet) & \cong R\Hom_{\Mod_k(G_\ell)}(V_1^\bullet, R\underline{\Hom}_{\Mod_k(G_r)}(V_2^\bullet,V_3^\bullet)) \\
& \cong R\Hom_{\Mod_k(G_\ell)}(V_2^\bullet, R\underline{\Hom}_{\Mod_k(G_r)}(V_1^\bullet,V_3^\bullet)).
\end{align*}
The derived functor is computable via homotopically injective resolutions, see part (ii) of \cite[Thm.~14.4.8]{KS}:
\begin{equation}\label{f:computation2}
  R\underline{\Hom}_{\Mod_k(G_r)}(V^\bullet,V'^\bullet) = \mathrm{tot}_\Pi \underline{\Hom}_{\Mod_k(G_r)}(V^\bullet,J'^\bullet)
\end{equation}
where $V'^\bullet \xrightarrow{\simeq} J'^\bullet$ is a homotopically injective resolution in $\Mod_k(G \times G)$. Here $ \mathrm{tot}_\Pi$ is the direct product totalization of the double complex. We emphasize that $\Pi$ refers to the direct product in $\Mod_k(G)$, in other words the smooth vectors of the direct product of abstract $k[G]$-modules.


\subsection{Duality on compact objects}

Next we relate the duality functor $R\underline{\Hom}(-,k)$ from \cite{SS} to the object $R\underline{\Ind}(k)$. In this section we are primarily interested in the restriction of the duality functor to $D(G)^c$ (the subcategory of compact objects). Our main result here is Corollary \ref{coro:tau-compact} below.

First, let $V_1^\bullet$ and $V_2^\bullet$ be any two objects of $D(G)$ and fix a homotopically injective resolution $V_2^\bullet \xrightarrow{\simeq} J^\bullet$. Then, by
\cite[Prop.~3.1] {SS}:
\begin{equation}\label{f:computation}
  R\underline{\Hom}(V_1^\bullet,V_2^\bullet) = \underline{\Hom}^\bullet(V_1^\bullet,J^\bullet) \quad \text{and} \quad R\underline{\Ind}(V_2^\bullet) = \underline{\Ind}(J^\bullet) \ .
\end{equation}
By Lemma \ref{lemma:GG}.iv. we have the $G$-equivariant adjunction isomorphism of actual complexes
\begin{equation}\label{f:derived-adj}
  \underline{\Hom}^\bullet(V_1^\bullet,J^\bullet) \xrightarrow{\cong} \mathrm{tot}_\Pi \underline{\Hom}_{\Mod_k(G_r)} (V_1^\bullet, \underline{\Ind}(J^\bullet)) \ .
\end{equation}
Using \eqref{f:computation}, \eqref{f:derived-adj}, and \eqref{f:computation2} we now consider the $G$-equivariant map
\begin{align*}
 \tau_{V_1^\bullet,V_2^\bullet} : R\underline{\Hom}(V_1^\bullet,V_2^\bullet) &= \underline{\Hom}^\bullet(V_1^\bullet,J^\bullet) \\
 & \cong \mathrm{tot}_\Pi \underline{\Hom}_{\Mod_k(G_r)} (V_1^\bullet, \underline{\Ind}(J^\bullet)) \\
 & \rightarrow \mathrm{tot}_\Pi \underline{\Hom}_{\Mod_k(G_r)}(V_1^\bullet, J_{\underline{\Ind}(J^\bullet)}^\bullet) \\
 &= R\underline{\Hom}_{\Mod_k(G_r)}(V_1^\bullet,\underline{\Ind}(J^\bullet)) \\
 &=R\underline{\Hom}_{\Mod_k(G_r)}(V_1^\bullet,R\underline{\Ind}(V_2^\bullet))
\end{align*}
where $\underline{\Ind}(J^\bullet) \xrightarrow{\simeq} J_{\underline{\Ind}(J^\bullet)}^\bullet$ is a homotopically injective resolution in $\Mod_k(G \times G)$.

\begin{proposition}\label{prop:tau-X}
 Let $U \subseteq G$ be a torsionfree pro-$p$ open subgroup. Viewing $\mathbf{X}_U:=\ind_U^G(k)$ as a complex (concentrated in degree zero) the above map $\tau_{\mathbf{X}_U,V_2^\bullet}$ is a quasi-isomorphism for any $V_2^\bullet$.
\end{proposition}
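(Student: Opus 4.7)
The plan is to substitute $V_1^\bullet = \mathbf{X}_U$ into the definition of $\tau$ and to show the only nontrivial arrow is a quasi-isomorphism via a termwise-acyclicity argument.

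First, Frobenius reciprocity for the $G_r$-action on $W \in \Mod_k(G \times G)$ gives the natural identification $\underline{\Hom}_{\Mod_k(G_r)}(\mathbf{X}_U, W) = W^{U_r}$ as a $G_\ell$-representation. Since $\mathbf{X}_U$ is concentrated in degree zero, $\mathrm{tot}_\Pi$ of the resulting $\underline{\Hom}$-complex collapses to the termwise application of $(-)^{U_r}$. The first arrow in the definition of $\tau_{\mathbf{X}_U, V_2^\bullet}$ is the adjunction isomorphism of Lemma \ref{lemma:GG}.iv, so it suffices to prove that the second arrow,
\begin{equation*}
  (\underline{\Ind}(J^\bullet))^{U_r} \longrightarrow (J^\bullet_{\underline{\Ind}(J^\bullet)})^{U_r},
\end{equation*}
induced by the homotopically injective resolution $\underline{\Ind}(J^\bullet) \xrightarrow{\simeq} J^\bullet_{\underline{\Ind}(J^\bullet)}$ in $\Mod_k(G \times G)$, is a quasi-isomorphism. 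The target computes $R((-)^{U_r})(\underline{\Ind}(J^\bullet))$ by definition; the source is $(-)^{U_r}$ applied termwise; and the two agree precisely when each $\underline{\Ind}(J^n)$ is $(-)^{U_r}$-acyclic.

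The heart of the argument is thus the following key claim: for every $J$ injective in $\Mod_k(G)$, the object $\underline{\Ind}(J) \in \Mod_k(G \times G)$ satisfies $R^i((-)^{U_r})(\underline{\Ind}(J)) = 0$ for all $i > 0$. Because the forgetful functor $\Mod_k(G_\ell) \to \Mod_k$ is exact, these derived functors agree as $k$-modules with the ordinary profinite cohomology, so $R^i((-)^{U_r})(W) = H^i(U, W|_{U_r})$. Writing $\underline{\Ind}(J) = \varinjlim_K \Ind_K^G(J|_K)$ and using that profinite cohomology of smooth modules commutes with filtered colimits, I reduce to showing $H^i(U, \Ind_K^G(J|_K)|_U) = 0$ for each compact open $K \subseteq G$ and each $i > 0$. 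This follows from three successive injectivity preservations: $J|_K$ is injective in $\Mod_k(K)$ (since $\mathrm{res}_K^G$ is right adjoint to the exact $\mathrm{ind}_K^G$); then $\Ind_K^G(J|_K)$ is injective in $\Mod_k(G)$ (since $\Ind_K^G$ is right adjoint to the exact $\mathrm{res}_K^G$); and finally its restriction to the open subgroup $U$ is injective in $\Mod_k(U)$, hence in particular $H^i(U,-)$-acyclic.

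To finish, the standard spectral-sequence argument for a bicomplex whose rows have vanishing higher $F$-cohomology converts the termwise acyclicity above into the desired quasi-isomorphism. Convergence for unbounded $V_2^\bullet$ is guaranteed by the finite cohomological dimension of $(-)^{U_r}$, which equals $\dim_{\Q_p} U$ by Lazard's Poincar\'e duality for the torsionfree pro-$p$ group $U$ (this is the only place where the torsionfree hypothesis enters). I expect the main technical obstacle to be the bookkeeping between the $G \times G$- and $G_\ell$-structures on the various derived functors; once one verifies that $R((-)^{U_r}) : D(G \times G) \to D(G_\ell)$ is computable via ordinary profinite $U$-cohomology with the inherited $G_\ell$-action, the remaining identifications are formal adjunction shuffles.
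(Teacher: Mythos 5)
Your overall strategy coincides with the paper's: use Frobenius reciprocity to replace $\underline{\Hom}_{\Mod_k(G_r)}(\mathbf{X}_U,-)$ by $(-)^{U_r}$, show the terms $\underline{\Ind}(J^n)$ are $U$-acyclic via the chain $J^n$ injective in $\Mod_k(G)$ $\Rightarrow$ $J^n|_K$ injective in $\Mod_k(K)$ $\Rightarrow$ $\Ind_K^G(J^n|_K)$ injective in $\Mod_k(G)$ $\Rightarrow$ injective in $\Mod_k(U)$, pass to the colimit over $K$, and invoke finite cohomological dimension of $H^0(U,-)$ (via Lazard, or \cite[Cor.~I.5.3($\gamma$)]{Har}) to upgrade termwise acyclicity to a quasi-isomorphism of unbounded complexes. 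That part is correct and matches the paper.

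There is, however, a genuine gap in the sentence "Because the forgetful functor $\Mod_k(G_\ell) \to \Mod_k$ is exact, these derived functors agree as $k$-modules with the ordinary profinite cohomology, so $R^i((-)^{U_r})(W) = H^i(U, W|_{U_r})$." The exactness of the forgetful functor $\Mod_k(G_\ell) \to \Mod_k$ is irrelevant; what you actually need to compare is the derived functor of $(-)^{U_r} : \Mod_k(G\times G) \to \Mod_k$ (computed via injective, resp.\ K-injective, resolutions in $\Mod_k(G\times G)$) with the profinite group cohomology $H^i(U, -)$ (computed via resolutions in $\Mod_k(U)$). Since $\{1\}\times U$ is compact but \emph{not open} in $G\times G$, restriction from $\Mod_k(G\times G)$ to $\Mod_k(U)$ does not obviously preserve injectives, so this identification is not formal. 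It is precisely the content of Lemma \ref{lemma:acyclic}, whose proof passes to the limit of Hochschild--Serre spectral sequences for $K\times U$ as $K$ shrinks to $\{1\}$. Concretely: on the target side you are applying $(-)^{U_r}$ to a K-injective resolution $J^\bullet_{\underline{\Ind}(J^\bullet)}$ in $\Mod_k(G\times G)$, and without Lemma \ref{lemma:acyclic} you cannot identify $h^*((J^\bullet_{\underline{\Ind}(J^\bullet)})^{U_r})$ with the profinite cohomology $H^*(U,\underline{\Ind}(J^\bullet))$ that your source-side argument actually controls. Once you add this lemma (or an equivalent argument), your proof is complete and follows the paper.
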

\begin{proof}
We have to show that the map
\begin{equation*}
  \underline{\Hom}_{\Mod_k(G_r)} (\mathbf{X}_U, \underline{\Ind}(J^\bullet))  \longrightarrow
 \underline{\Hom}_{\Mod_k(G_r)}(\mathbf{X}_U, J_{\underline{\Ind}(J^\bullet)}^\bullet)
\end{equation*}
is a quasi-isomorphism. Frobenius reciprocity implies that
\begin{equation}\label{f:Frobenius}
  \underline{\Hom}_{\Mod_k(G_r)}(\mathbf{X}_U,-) = \varinjlim_{K} \Hom_{\Mod_k(G_r)}(\mathbf{X}_U,-)^{K \times \{1\}} = \varinjlim_{K} (-)^{K \times U} = (-)^{\{1\} \times U} \ .
\end{equation}
Hence the map in question is the map $\underline{\Ind}(J^\bullet)^{\{1\} \times U} \longrightarrow (J_{\underline{\Ind}(J^\bullet)}^\bullet)^{\{1\} \times U}$. Of course we have the isomorphism $H^*(U,\underline{\Ind}(J^\bullet)) \xrightarrow{\cong} H^*(U, J_{\underline{\Ind}(J^\bullet)}^\bullet)$ where, for simplicity, we write simply $U$ instead of $\{1\} \times U$.
We adopt this convention for the rest of this proof. Hence it is enough to verify the following:
\begin{itemize}
  \item[(a)] $H^*(U,\underline{\Ind}(J^\bullet)) = h^*(\underline{\Ind}(J^\bullet)^U)$;
  \item[(b)] $H^*(U, J_{\underline{\Ind}(J^\bullet)}^\bullet) = h^*((J_{\underline{\Ind}(J^\bullet)}^\bullet)^U)$.
\end{itemize}
We first note (\cite[1.3.5]{Lur}, \cite[3.1]{ScSc}) that we may assume that all our homotopically injective resolutions are even semi-injective, i.e., in addition each of their terms is an injective object.

\medskip

$\bullet$ \emph{Part (a)}

\medskip

Each $J^m$, for $m \in \mathbb{Z}$, is injective in $\Mod_k(G)$ and hence in $\Mod_k(K)$. Then any $\Ind_K^G(J^m)$ is injective in $\Mod_k(G)$ and hence in $\Mod_k(U)$. Since the cohomology of $U$ commutes with inductive limits it follows that the complex $\underline{\Ind}(J^\bullet)$ consists of $H^0(U,-)$-acyclic objects. The functor $H^0(U,-)$ has finite cohomological dimension. Therefore a) holds by \cite[Cor.~I.5.3($\gamma$)]{Har} (and its proof).

\medskip

$\bullet$ \emph{Part (b)}

\medskip

This is a similar argument since in the subsequent Lemma \ref{lemma:acyclic} we will show that any term of the complex $J_{\underline{\Ind}(J^\bullet)}^\bullet$ being injective in $\Mod_k(G \times G)$ is $H^0(U,-)$-acyclic.
\end{proof}

At the end of the previous proof we alluded to:

\begin{lemma}\label{lemma:acyclic}
   Let $U \subseteq G$ be an open subgroup. Then any injective object $V$ in $\Mod_k(G \times G)$ is $H^0(\{1\} \times U,-)$-acyclic.
\end{lemma}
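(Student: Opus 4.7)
My plan is to exhaust $V$ by the subspaces $V^{K\times\{1\}}$ for $K$ running over compact open subgroups of $G$, to show that each such subspace is injective as an object of $\Mod_k(U)$, and then to conclude by commutation of cohomology with filtered colimits.

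The first step would be to show that $V^{K\times\{1\}}$ is injective in $\Mod_k(\{1\}\times G)=\Mod_k(G)$ by factoring the functor $(-)^{K\times\{1\}}\colon \Mod_k(G\times G)\to\Mod_k(G)$ through $\Mod_k(K\times G)$. Since $K\times G$ is open in $G\times G$, restriction to it preserves injectives, its left adjoint (compact induction) being exact. The second factor, $K\times\{1\}$-invariants $\Mod_k(K\times G)\to\Mod_k(\{1\}\times G)$, also preserves injectives because it is right adjoint to the exact inflation functor sending a smooth $G$-representation to itself viewed with trivial $K_\ell$-action. Composing, $V^{K\times\{1\}}$ is injective in $\Mod_k(G)$; applying once more the open-subgroup restriction $\Mod_k(G)\to\Mod_k(U)$ (whose left adjoint $\ind_U^G$ is exact) shows it remains injective in $\Mod_k(U)$, so in particular $H^i(\{1\}\times U, V^{K\times\{1\}})=0$ for $i>0$.

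For the second step, smoothness of $V$ as a $G\times G$-representation ensures every vector is stabilized by some compact open $K\times K'$, hence by some $K\times\{1\}$, so $V=\bigcup_K V^{K\times\{1\}}$ as a smooth $U$-representation. In the application relevant to Proposition \ref{prop:tau-X} the subgroup $U$ is pro-$p$, hence compact, and therefore $H^*(U,-)$ commutes with filtered colimits on $\Mod_k(U)$. Combining,
$$
H^i(\{1\}\times U, V) \;=\; \varinjlim_K H^i(\{1\}\times U, V^{K\times\{1\}}) \;=\; 0 \qquad (i>0),
$$
which is the desired acyclicity.

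The only delicate point is the very last appeal to commutation of cohomology with filtered colimits: this uses that $k$ is finitely presented in $\Mod_k(U)$, which rests on compactness of $U$. For compact $U$ (hence for the application) this is completely routine, but for a general non-compact open subgroup $U\le G$ the exhaustion argument above would need to be supplemented; such generality is however not required in the sequel.
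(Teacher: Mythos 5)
Your proof is correct, but it follows a genuinely different route than the paper's, so let me compare. The paper's proof fixes a compact open $K \subseteq G$ and applies the Hochschild--Serre spectral sequence for $1 \to \{1\}\times U \to K\times U \to K \to 1$, namely $E_2^{rs}=H^r(K,H^s(U,V))\Rightarrow H^{r+s}(K\times U,V)$; passing to the colimit over shrinking $K'\subseteq K$ kills the $H^r(K',-)$ for $r>0$ and yields $H^s(U,V)\cong\varinjlim_{K'}H^s(K'\times U,V)$, which vanishes for $s>0$ since $V$ stays injective upon restriction to the open subgroup $K'\times U$. You instead exhaust $V$ by the $U$-subrepresentations $V^{K\times\{1\}}$, show each is injective in $\Mod_k(U)$ by factoring $(-)^{K\times\{1\}}$ as an open-subgroup restriction (left adjoint $\ind$ exact) followed by $(K\times\{1\})$-invariants (right adjoint to the exact inflation $\Mod_k(G)\to\Mod_k(K\times G)$, noting $K\times\{1\}$ is normal in $K\times G$) followed by restriction to the open subgroup $U$, and then pass these acyclicity statements through the filtered colimit. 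Your argument is arguably more elementary, avoiding the spectral sequence entirely, and each injectivity step is a clean adjunction observation. The trade-off, which you correctly flag yourself, is that the final colimit exchange requires $U$ to be \emph{compact} (so that group cohomology commutes with filtered colimits of discrete modules), whereas the paper's spectral-sequence argument applies to any open subgroup $U$ as stated in the lemma. Since the lemma is invoked in Proposition \ref{prop:tau-X} and Corollary \ref{coro:nat-isos-X} only for torsionfree pro-$p$ (hence compact) $U$, this restriction is harmless for the sequel, but as written your proof establishes a slightly weaker statement than the lemma claims; if you wanted full generality you would need to replace the colimit argument in the last step, for instance by the paper's spectral-sequence device.
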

\begin{proof}
First we allow an arbitrary object $V$ in $\Mod_k(G \times G)$. For any compact open subgroup $K \subset G$ we have the Hochschild-Serre spectral sequence
(where we write $U$ instead of $\{1\}\times U$ on the $E_2$-page):
\begin{equation*}
  E_2^{rs} = H^r(K, H^s(U,V)) \Longrightarrow H^{r+s}(K \times U,V) \ .
\end{equation*}
It is functorial with respect to the restriction to a smaller compact open subgroup $K' \subseteq K$ (see \cite[II.4 Ex.~3]{NSW}). Hence we may pass to the limit with respect to smaller $K' \subseteq K$ in this spectral sequence. As in the proof of Prop.\ \ref{prop:Ind-fcd}.i the limit spectral sequence degenerates into isomorphisms
\begin{equation*}
  H^s(U,V) \cong \varinjlim_{K'} H^s(K' \times U,V) \ .
\end{equation*}
Now, if $V$ is injective in $\Mod_k(G \times G)$ then it is injective in each $\Mod_k(K' \times U)$ so that the above right-hand side vanishes for $s > 0$. Therefore so does
$H^s(U,V)$.
\end{proof}

We easily deduce the following result.

\begin{corollary}\label{coro:nat-isos-X}
  Let $U \subseteq G$ be a torsionfree pro-$p$ open subgroup; we then have:
\begin{itemize}
  \item[i.] The functors $R\underline{\Hom}_{\Mod_k(G_r)}(\mathbf{X}_U,-)$ and $RH^0(\{1\} \times U,-)$ from $D(G \times G)$ to $D(G)$ are naturally isomorphic;
  \item[ii.] the functors $R\underline{\Hom}(\mathbf{X}_U,-)$ and $RH^0(\{1\} \times U,R\underline{\Ind}(-))$ from $D(G)$ to $D(G)$ are naturally isomorphic;
  \item[iii.] if $G$ is compact, then the functors $R\underline{\Hom}(\mathbf{X}_U,-)$ and $RH^0(\{1\} \times U,\underline{\Ind}(-))$ from $D(G)$ to $D(G)$ are naturally isomorphic.
\end{itemize}
\end{corollary}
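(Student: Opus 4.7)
The plan is to deduce all three parts by combining the unenriched identity \eqref{f:Frobenius}, which identifies $\underline{\Hom}_{\Mod_k(G_r)}(\mathbf{X}_U,-)$ with the $\{1\}\times U$-invariants functor, together with Lemma \ref{lemma:acyclic}, the comparison quasi-isomorphism $\tau$ of Proposition \ref{prop:tau-X}, and Remark \ref{rem:Ind-exact}. All the functors in sight will turn out to be computable via the same semi-injective resolutions, so most of the work is bookkeeping.

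For part i, I would pick for any $V^\bullet$ in $D(G\times G)$ a semi-injective resolution $V^\bullet \xrightarrow{\simeq} J^\bullet$ in $\Mod_k(G\times G)$ (as recalled at the beginning of the proof of Proposition \ref{prop:tau-X}, one can arrange each $J^n$ to be injective). The formula \eqref{f:computation2} combined with \eqref{f:Frobenius} then gives
\begin{equation*}
R\underline{\Hom}_{\Mod_k(G_r)}(\mathbf{X}_U,V^\bullet)=\mathrm{tot}_\Pi\,\underline{\Hom}_{\Mod_k(G_r)}(\mathbf{X}_U,J^\bullet)=(J^\bullet)^{\{1\}\times U}.
\end{equation*}
On the other hand, by Lemma \ref{lemma:acyclic} each $J^n$ is $H^0(\{1\}\times U,-)$-acyclic, and $H^0(\{1\}\times U,-)$ has finite cohomological dimension; hence $RH^0(\{1\}\times U,V^\bullet)$ is likewise represented by $(J^\bullet)^{\{1\}\times U}$ (cf.\ \cite[Cor.~I.5.3($\gamma$)]{Har} and its proof, as already used in Proposition \ref{prop:Ind-fcd}). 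The two functors thus agree term-wise on a common resolution, and since the identification comes from the identity map it is natural in $V^\bullet$.

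Part ii follows formally: the natural quasi-isomorphism $\tau_{\mathbf{X}_U,V^\bullet}$ of Proposition \ref{prop:tau-X} composes with the natural isomorphism of part i applied to $R\underline{\Ind}(V^\bullet)\in D(G\times G)$ to give
\begin{equation*}
R\underline{\Hom}(\mathbf{X}_U,V^\bullet)\xrightarrow{\;\tau\;} R\underline{\Hom}_{\Mod_k(G_r)}(\mathbf{X}_U,R\underline{\Ind}(V^\bullet))\xrightarrow{\;\cong\;} RH^0(\{1\}\times U,R\underline{\Ind}(V^\bullet)).
\end{equation*}
For part iii, Remark \ref{rem:Ind-exact} says $\underline{\Ind}$ is exact when $G$ is compact; hence it preserves quasi-isomorphisms, and for any homotopically injective resolution $V^\bullet\xrightarrow{\simeq}J^\bullet$ in $\Mod_k(G)$ the induced morphism $\underline{\Ind}(V^\bullet)\to\underline{\Ind}(J^\bullet)=R\underline{\Ind}(V^\bullet)$ is a quasi-isomorphism in $D(G\times G)$. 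Substituting this into part ii yields the statement. The only potential obstacle is purely bookkeeping: one must verify that each of these identifications is functorial in $V^\bullet$ and assembles into an actual morphism of functors between derived categories. Since every ingredient (\eqref{f:Frobenius}, \eqref{f:computation2}, Proposition \ref{prop:tau-X}, and the exactness in the compact case) is already natural, this presents no substantive difficulty.
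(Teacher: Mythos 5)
Your proof is correct and follows essentially the same route as the paper: part i from \eqref{f:Frobenius} together with Lemma \ref{lemma:acyclic} (you simply spell out the common semi-injective resolution), part ii by composing with $\tau_{\mathbf{X}_U,-}$ from Proposition \ref{prop:tau-X}, and part iii by invoking Remark \ref{rem:Ind-exact} to replace $R\underline{\Ind}$ with $\underline{\Ind}$.
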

\begin{proof}
i. This follows from \eqref{f:Frobenius} and the above Lemma \ref{lemma:acyclic}. ii. Combine i. and Prop.\ \ref{prop:tau-X}. iii. If $G$ is compact then, by Remark \ref{rem:Ind-exact}, the functor $\underline{\Ind}$ is exact so that $R\underline{\Ind} = \underline{\Ind}$. Now apply part ii.
\end{proof}

We now specialize the map $\tau_{V_1^\bullet,V_2^\bullet}$ to the case where $V_2^\bullet$ is the complex with the trivial representation $k$ in degree zero, and obtain a natural transformation
\begin{equation}\label{f:tau}
  \tau_{-} := \tau_{-,k} : R\underline{\Hom}(-,k) \longrightarrow R\underline{\Hom}_{\Mod_k(G_r)}(-,R\underline{\Ind}(k))
\end{equation}
between exact functors from $D(G)$ to $D(G)$.

Recall from \cite[Rk.~10]{DGA} that the full subcategory $D(G)^c$ of all compact objects in $D(G)$ is the smallest strictly full triangulated subcategory closed under direct summands which contains $\mathbf{X}_U$ for some (or equivalently any) open torsionfree pro-$p$ subgroup $U \subseteq G$.

\begin{corollary}\label{coro:tau-compact}
  $\tau_{-}$ restricted to $D(G)^c$ is a natural isomorphism.
\end{corollary}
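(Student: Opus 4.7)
The plan is to reduce the statement to Proposition \ref{prop:tau-X} via a standard \emph{d\'evissage} argument, using the structural description of $D(G)^c$ recalled just above the corollary.

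First, I would note that both functors in \eqref{f:tau}, namely
\begin{equation*}
  R\underline{\Hom}(-,k) \quad \text{and} \quad R\underline{\Hom}_{\Mod_k(G_r)}(-,R\underline{\Ind}(k)),
\end{equation*}
are exact contravariant functors $D(G)^{op}\to D(G)$: the first is exact by the general machinery of \cite{SS}, and the second is exact as part of the package provided by \cite[Thm.~14.4.8]{KS} that was invoked to construct $R\underline{\Hom}_{\Mod_k(G_r)}$. Moreover $\tau_{-}$ is a morphism of triangulated functors, so it is compatible with shifts and with distinguished triangles.

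Next I would consider the full subcategory
\begin{equation*}
  \mathcal{C} := \{\, V^\bullet \in D(G) : \tau_{V^\bullet} \text{ is an isomorphism in } D(G)\,\}.
\end{equation*}
By the 5-lemma in a triangulated category, if $V_1^\bullet \to V_2^\bullet \to V_3^\bullet \to V_1^\bullet[1]$ is a distinguished triangle and two of $\tau_{V_1^\bullet}, \tau_{V_2^\bullet}, \tau_{V_3^\bullet}$ are isomorphisms, then so is the third; compatibility of $\tau$ with the shift then shows that $\mathcal{C}$ is a strictly full triangulated subcategory of $D(G)$. Since an isomorphism in $D(G)$ stays an isomorphism after passing to a direct summand, $\mathcal{C}$ is moreover closed under direct summands.

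Finally, Proposition \ref{prop:tau-X} says exactly that $\mathbf{X}_U \in \mathcal{C}$ for any torsionfree pro-$p$ open subgroup $U \subseteq G$. By the characterization of $D(G)^c$ from \cite[Rk.~10]{DGA} recalled just before the corollary, $D(G)^c$ is the smallest strictly full triangulated subcategory of $D(G)$ that is closed under direct summands and contains $\mathbf{X}_U$. Hence $D(G)^c \subseteq \mathcal{C}$, which is precisely the claim. There is no real obstacle here; the only thing to verify carefully is that the natural transformation $\tau_{-}$ is genuinely triangulated, which is immediate from its construction as the composite of the adjunction quasi-isomorphism \eqref{f:derived-adj} with the map to a homotopically injective resolution.
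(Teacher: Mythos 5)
Your proof is correct and follows essentially the same route as the paper: both define the full subcategory of objects on which $\tau$ is an isomorphism, observe it is a strictly full triangulated subcategory closed under direct summands containing $\mathbf{X}_U$ (by Proposition \ref{prop:tau-X}), and invoke the characterization of $D(G)^c$ from \cite[Rk.~10]{DGA}. Your write-up merely spells out in more detail the steps (exactness of both functors, compatibility of $\tau$ with triangles, the 5-lemma) that the paper compresses into a single sentence.
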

\begin{proof}
The full subcategory of all objects $V^\bullet$ in $D(G)$ for which $\tau_{V^\bullet}$ is an isomorphism is a strictly full triangulated subcategory closed under direct summands which contains $\mathbf{X}_U$ by Prop.\ \ref{prop:tau-X}.
\end{proof}


\subsection{The complex $R\underline{\Ind}(k)$ for reductive groups}\label{sec:RIndk}

In this section we again focus on $p$-adic reductive groups, and we put ourselves in the setup from Section \ref{red}.
Thus $G=\mathbf{G}(\mathfrak{F})$ is the group of $\F$-rational points of a connected reductive group $\mathbf{G}$ defined over some finite extension $\mathfrak{F}/\mathbb{Q}_p$.
The goal in this section is to establish the following vanishing result.

\begin{theorem}\label{thm:vanishing}
  $R^i\underline{\Ind}(k) = 0$ for all $i>0$.
\end{theorem}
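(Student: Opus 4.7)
The plan is to combine Lemma \ref{lemma:Ind=lim} and Proposition \ref{RInd} with the decomposition of Proposition \ref{decompose} to show that all transition maps in the resulting colimit vanish in positive degrees. Since $\{K_m\}_{m \in e\N}$ (with $m > e$ if $p = 2$) is cofinal among the compact open subgroups of $G$, I have
\begin{equation*}
R^i\underline{\Ind}(k) \;\cong\; \varinjlim_{(m,n)} \varprojlim_{x \in G/K_n} H^i(K_m \cap xK_nx^{-1}, k),
\end{equation*}
where the transition from $(m,n)$ to $(m', n')$ is given by $(c_{xK_n}) \mapsto \bigl(\res^{K_m \cap xK_nx^{-1}}_{K_{m'} \cap xK_{n'}x^{-1}}(c_{xK_n})\bigr)_{x \in G/K_{n'}}$. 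I claim that the transition from $(m,n)$ to $(m+e, n+e)$ is the zero map in every degree $i>0$, which immediately forces the colimit to vanish.

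Using the Cartan decomposition $G = K_0 Z^+ K_0$ and the $K_0$-normality of all the $K_\ell$ involved, exactly the commutative diagram at the start of the proof of Lemma \ref{resvan} reduces the problem to showing that $\res^{K_m \cap zK_nz^{-1}}_{K_{m+e} \cap zK_{n+e}z^{-1}} = 0$ in degree $i>0$ for every $z \in Z^+$. Applying Proposition \ref{decompose} to both source and target, and noting that restriction commutes with cup products, the map respects the tensor-product decomposition; on each summand it is a tensor product of maps $\bigwedge^k (H)_\Phi^* \to \bigwedge^k (H')_\Phi^*$ with $H' \subseteq H$ uniform. Such a factor vanishes for $k > 0$ once $H' \subseteq H^p$, so it suffices to check this inclusion for each factor group. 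For the central piece \eqref{f:lower-series} gives $Z_{\max\{m+e, n+e\}} = Z_{\max\{m,n\}}^p$ directly. For the root pieces, applying \eqref{f:lower-series} to $\tilde{U}_{\alpha, m}$ and $\tilde{U}_{\alpha, n}$ separately and using that conjugation commutes with $p$-th powers, the required inclusion reduces to the identity
\begin{equation*}
(\tilde{U}_{\alpha, m} \cap z\tilde{U}_{\alpha, n}z^{-1})^p \;=\; \tilde{U}_{\alpha, m}^p \cap (z\tilde{U}_{\alpha, n}z^{-1})^p.
\end{equation*}

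This identity is the main technical point. It holds whenever both sides sit inside a common torsion-free ambient group: via the faithful matrix representation of the Weil restriction of $\mathbf{G}_{x_0}^\circ$ used in the proof of Proposition \ref{prop:intersec-uniform}, the bijection $\log: 1 + A_0 \iso A_0$ forces any common $p$-th root $a = b$ to coincide in $A \cap B$, so every element of $A^p \cap B^p$ lies in $(A \cap B)^p$. With this identity in place, each summand of $H^i$ for $i>0$ has at least one vanishing tensor factor, hence the whole restriction is zero, every class in $R^i\underline{\Ind}(k)$ is killed, and the theorem follows.
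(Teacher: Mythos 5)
Your proof is correct but takes a genuinely different route from the paper's. The paper first passes to the diagonal colimit (by cofinality one may set $K=U=K_n$) and then proves Lemma \ref{lem:ppower}: $(K_n \cap zK_nz^{-1})^p = K_{n+e} \cap zK_{n+e}z^{-1}$. The key simplification there is that, with equal levels and $z \in Z^+$, each filtration intersection $\tilde{U}_{\alpha,n} \cap z\tilde{U}_{\alpha,n}z^{-1}$ collapses to $\tilde{U}_{\alpha,n}$ (for $\alpha$ negative) or to $z\tilde{U}_{\alpha,n}z^{-1}$ (for $\alpha$ positive), so its lower $p$-series is read off directly from \eqref{f:lower-series} applied to the single surviving factor; then Lemma \ref{lem:res} finishes. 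By keeping $m$ and $n$ independent, your intersection $\tilde{U}_{\alpha,m} \cap z\tilde{U}_{\alpha,n}z^{-1}$ no longer collapses, and you are forced to establish the identity $(\tilde{U}_{\alpha,m}\cap z\tilde{U}_{\alpha,n}z^{-1})^p = \tilde{U}_{\alpha,m}^p\cap(z\tilde{U}_{\alpha,n}z^{-1})^p$ from scratch. Your $\log$-based argument for it is sound but stated a bit tersely: since $z$ may conjugate you out of $1+A_0$, you should note, exactly as in the proof of Lemma \ref{lem:intersec-uniform}, that $\log(zbz^{-1})=z\log(b)z^{-1}$ still converges and that $\exp$ applied to it returns $zbz^{-1}$; only then does $p\log a = p\log(zbz^{-1})$ yield $a=zbz^{-1}$ and hence $a\in \tilde{U}_{\alpha,m}\cap z\tilde{U}_{\alpha,n}z^{-1}$. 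Two further remarks: once you have that identity you could bypass Proposition \ref{decompose} altogether and apply it directly to $K_m$ and $zK_nz^{-1}$, obtaining the off-diagonal analogue of Lemma \ref{lem:ppower} and then concluding by Lemma \ref{lem:res}; and your mechanism does not actually need the Cartan reduction to $z\in Z^+$ you invoke, since the $\log$ argument works verbatim for any $g\in G$. In short, the paper's diagonal trick makes the lower-$p$-series computation essentially a one-liner from the Iwahori factorization, while your route pays for the extra generality in the indices $(m,n)$ with a more robust (and reusable) intersection-of-$p$-powers principle.
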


The proof requires some preparation. We start with the following observation.

\begin{lemma}\label{lem:res}
  Let $U' \subset U$ be two uniform pro-$p$ groups; if $U' \subset U^p$, then the restriction map
  $$
  H^i(U,k) \overset{\text{res}}{\longrightarrow} H^i(U',k)
  $$
  is the zero map for all $i>0$.
\end{lemma}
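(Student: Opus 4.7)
The plan is to reduce to the case $i=1$ using Lazard's description of the cohomology of uniform pro-$p$ groups, which has already been invoked earlier in the paper just before Proposition \ref{decompose}. Since $U$ is uniform, cup product gives an isomorphism of graded $k$-algebras
\[
\bigwedge H^1(U,k) \iso H^*(U,k),
\]
so every class in $H^i(U,k)$ is a sum of $i$-fold cup products of degree-one classes. As restriction is a morphism of graded $k$-algebras (it commutes with cup products), it then suffices to check that $\res: H^1(U,k) \to H^1(U',k)$ vanishes.

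For this first identify $H^1(U,k) = \Hom_{\mathbb{F}_p}(U/U^p,k) = U_\Phi^*$ and similarly $H^1(U',k) = (U')_\Phi^*$; here I use that $U'$, being uniform, is in particular powerful so that $U^{\prime p}$ is the Frattini subgroup of $U'$. Under these identifications the restriction map is induced by the inclusion $U' \hookrightarrow U$ composed with the projection $U \twoheadrightarrow U/U^p$. By assumption $U' \subset U^p$, so this composite is the zero map, and hence $\res: H^1(U,k) \to H^1(U',k)$ is zero. Combining this with the preceding paragraph yields the vanishing of $\res$ on $H^i(U,k)$ for all $i \geq 1$.

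There is no real obstacle here; the only thing to keep track of is that Lazard's theorem is being applied only to $U$ (to guarantee that $H^1(U,k)$ generates $H^*(U,k)$ as a $k$-algebra), while the uniformity of $U'$ is used only to have a clean description of $H^1(U',k)$ as $\Hom_{\mathbb{F}_p}(U'/U'^p,k)$. In fact, the latter identification holds for any pro-$p$ group (it is just the dual of the Frattini quotient), so the assumption that $U'$ is uniform is not even needed for this lemma — but stating it as in the paper is harmless.
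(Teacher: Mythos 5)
Your proof is correct and follows essentially the same route as the paper's: reduce to degree one via Lazard's cup-product isomorphism for the uniform group $U$, then observe that the degree-one restriction factors through $U' \hookrightarrow U \twoheadrightarrow U/U^p$, which vanishes because $U' \subset U^p$. Your closing remark that uniformity of $U'$ is unnecessary is also accurate, since $H^1(\Gamma,k)=\Hom_{\mathrm{cont}}(\Gamma,k)$ factors through the Frattini quotient for any pro-$p$ group $\Gamma$.
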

\begin{proof}
For $i = 1$ the map in question is the natural map $\Hom_k(U/U^p,k) \rightarrow \Hom_k(U'/U'^p,k)$ which is the zero map by assumption. For uniform pro-$p$ groups, and a coefficient field $k$ of characteristic $p$, the cohomology is generated under the cup product in degree $1$ (one can reduce to the case $k=\Bbb{F}_p$ which is \cite[Prop.~2.5.7.1, p.~567]{Laz}). Since restriction maps commute with cup products the assertion follows.
\end{proof}

To apply Lemma \ref{lem:res} the key input is the following.

\begin{lemma}\label{lem:ppower}
Let $n \in e\N$ (and assume $n>e$ if $p=2$). Then, for all $g \in G$ we have
$$
(K_n \cap gK_ng^{-1})^p=K_{n+e} \cap gK_{n+e}g^{-1} .
$$
\end{lemma}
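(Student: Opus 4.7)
The plan is to reduce to $g=z \in Z$ and then transport the claim to the Lie algebra via $\log$, where it becomes a trivial identity about $\mathbb{Z}_p$-submodules.

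\textbf{Step 1: Reduction to} $g=z \in Z$. By the Cartan decomposition $G = K_0 Z K_0$ (as used in Lemma \ref{resvan}), write $g = h z h'$ with $h, h' \in K_0$ and $z \in Z$. Since $K_n$ and $K_{n+e}$ are both normal in $K_0$, we have
\begin{equation*}
  K_m \cap g K_m g^{-1} = h\big(K_m \cap z K_m z^{-1}\big)h^{-1} \qquad \text{for } m = n, n+e.
\end{equation*}
As conjugation commutes with taking $p$-th powers of a subgroup, it suffices to establish the claim for $g = z$.

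\textbf{Step 2: Embed into} $\mathrm{GL}_N$ \textbf{and pass to $\log$.} As in the proof of Proposition \ref{prop:intersec-uniform}, choose a faithful representation of $\mathrm{Res}_{\mathcal{O}/\mathbb{Z}_p}\mathbf{G}_{x_0}^\circ$ into some $\mathrm{GL}_N$, arranged so that $K_n$ lands inside the uniform subgroup $1 + A_0 = \ker(\mathrm{GL}_N(\mathbb{Z}_p) \to \mathrm{GL}_N(\mathbb{Z}_p/\wp\mathbb{Z}_p))$ of Lemma \ref{lem:intersec-uniform}. The element $z$ becomes a unit in $A = M_N(\mathbb{Q}_p)$; the series $\log(z \kappa z^{-1}) = z \log(\kappa) z^{-1}$ still converges for $\kappa \in K_n$ (exactly as in the proof of Lemma \ref{lem:intersec-uniform}), and $\log$ remains injective on $K_n \cup zK_nz^{-1}$ with inverse $\exp$. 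Hence $\log$ preserves intersections:
\begin{equation*}
  \log(K_n \cap zK_nz^{-1}) = \log(K_n) \cap z \log(K_n) z^{-1}.
\end{equation*}

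\textbf{Step 3: The $p$-th power becomes multiplication by $p$.} For any uniform pro-$p$ subgroup $U$ sitting inside (or handled as in) $1+A_0$, we have $\log(U^p) = p \cdot \log(U)$. Using \eqref{f:lower-series} and Proposition \ref{prop:intersec-uniform}.a (to know that $K_n \cap zK_nz^{-1}$ is itself uniform), both sides of the claim become $\mathbb{Z}_p$-submodules of $A$:
\begin{align*}
  \log\big((K_n \cap zK_nz^{-1})^p\big) &= p \cdot \big(\log K_n \cap z\log(K_n)z^{-1}\big), \\
  \log\big(K_{n+e} \cap zK_{n+e}z^{-1}\big) &= p\log K_n \cap p \cdot z\log(K_n)z^{-1}.
\end{align*}

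\textbf{Step 4: Conclude by a trivial lattice identity.} It remains to observe $p(X \cap Y) = pX \cap pY$ for any two $\mathbb{Z}_p$-submodules $X, Y$ of $A$, which holds because $A$ is a $\mathbb{Q}_p$-vector space (hence $p$-torsion free): if $pa = px = py$ with $x \in X$, $y \in Y$, then $x = y \in X \cap Y$. Taking $\exp$ on both sides then yields the desired equality of subgroups. The only real subtlety is justifying that $\log$ and $\exp$ behave well on the conjugate $zK_nz^{-1}$ despite $z$ possibly lying outside $1+A_0$; this is precisely the point handled by the convergence computation recalled from Lemma \ref{lem:intersec-uniform}.
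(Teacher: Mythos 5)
Your proof is correct, but it takes a genuinely different route from the paper. The paper's argument stays entirely at the group level: it invokes the Iwahori factorization \eqref{iwa} to write $K_n\cap zK_nz^{-1}$ as a product of pieces indexed by the negative roots, the torus, and the positive roots, observes that for $z\in Z^+$ each root-group piece collapses to either $\tilde U_{\alpha,n}$ or $z\tilde U_{\alpha,n}z^{-1}$, and then reads off the $p$-th power factor by factor using \eqref{f:lower-series}. You instead pass to the Lie algebra via $\log$, which turns the intersection into an intersection of $\mathbb{Z}_p$-lattices $\log K_n \cap z\log(K_n)z^{-1}$, and the $p$-th power map into multiplication by $p$; the statement then reduces to the trivial identity $p(X\cap Y)=pX\cap pY$ in a torsion-free module. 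Your key inputs are the uniformity of $K_n\cap zK_nz^{-1}$ (Prop.\ \ref{prop:intersec-uniform}.a) and the convergence of $\log$ on the conjugate (borrowed from the proof of Lemma \ref{lem:intersec-uniform}).

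A few remarks on the trade-offs. Your approach is more uniform: it makes no use of the root-group structure, and in fact proves a cleaner general statement (for any two uniform open subgroups $K, K'$ of $1+A_0$ with $K'=K^p$ and any $a\in A^\times$ with $K\cap aKa^{-1}$ open, one has $(K\cap aKa^{-1})^p = K'\cap aK'a^{-1}$). In particular, your Step 1 (reduction to $z\in Z$ via the Cartan decomposition) is actually unnecessary for your argument, since Prop.\ \ref{prop:intersec-uniform}.a already gives uniformity of $K_n\cap gK_ng^{-1}$ for arbitrary $g\in G$, and everything else in Steps 2--4 works verbatim with $g$ in place of $z$. The paper's proof, by contrast, is more structural and is consistent in spirit with the root-by-root analysis used elsewhere in the paper (e.g.\ Prop.\ \ref{decompose} and Lemma \ref{resvan}), but it genuinely needs the reduction to $Z^+$. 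Both are complete and correct; yours is shorter once Prop.\ \ref{prop:intersec-uniform} is available.
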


\begin{proof}
By the Cartan decomposition $G=K_0Z^+K_0$ we may assume that $g=z \in Z^+$, noting that $K_n$ and $K_{n+e}$ are both normal subgroups of $K_0$. For any $z \in Z$ we have,
by \eqref{iwa} in the proof of Proposition \ref{decompose}, the homeomorphism
\begin{equation*}
\big(\prod_{\alpha \in \Phi_{\red}^-} \tilde{U}_{\alpha,n}\cap z\tilde{U}_{\alpha,n}z^{-1}\big)  \times Z_n \times \big(\prod_{\alpha \in \Phi_{\red}^+} \tilde{U}_{\alpha,n}\cap z\tilde{U}_{\alpha,n}z^{-1} \big) \iso K_n \cap zK_nz^{-1}.
\end{equation*}
In that proof we also argued that by dividing out by the subgroups of $p$-powers (even if $p=2$) we get an isomorphism. Hence the above map restricted to $p$-powers must still be a homeomorphism, i.e.,
\begin{equation*}
  (K_n \cap zK_nz^{-1})^p = \prod_{\alpha \in \Phi_{\text{red}}^-} ( \tilde{U}_{\alpha,n}\cap z\tilde{U}_{\alpha,n}z^{-1})^p \times Z_n^p \times  \prod_{\alpha \in \Phi_{\text{red}}^+} (\tilde{U}_{\alpha,n}\cap z\tilde{U}_{\alpha,n}z^{-1})^p \ .
\end{equation*}
Using \eqref{f:ZconjU} we compute, now for $z \in Z^+$,
\begin{align*}
   z\tilde{U}_{\alpha,n}z^{-1}
   & = z U_{\alpha,n}z^{-1} \cdot z U_{2\alpha,n}z^{-1} \\
   &  = U_{\alpha,n - \langle \nu(z),\alpha \rangle} \cdot U_{2\alpha,n - \langle \nu(z),2\alpha \rangle}  \\
   &
   \begin{cases}
   \supseteq U_{\alpha,n} \cdot U_{2\alpha,n} = \tilde{U}_{\alpha,n} & \text{if $\alpha \in \Phi^-$},  \\
   \subseteq U_{\alpha,n} \cdot U_{2\alpha,n} = \tilde{U}_{\alpha,n} & \text{if $\alpha \in \Phi^+$}
   \end{cases}
\end{align*}
and therefore
\begin{equation*}
  \tilde{U}_{\alpha,n}\cap z \tilde{U}_{\alpha,n}z^{-1} =
  \begin{cases}
   \tilde{U}_{\alpha,n}  & \text{if $\alpha \in \Phi^-$},  \\
   z \tilde{U}_{\alpha,n} z^{-1} & \text{if $\alpha \in \Phi^+$}.
   \end{cases}
\end{equation*}
Using \eqref{f:lower-series} it follows that
\begin{equation*}
  (\tilde{U}_{\alpha,n}\cap z\tilde{U}_{\alpha,n}z^{-1})^p = \tilde{U}_{\alpha,n}^p \cap z\tilde{U}_{\alpha,n}^p z^{-1} = \tilde{U}_{\alpha,n+e}\cap z\tilde{U}_{\alpha,n+e}z^{-1}
\end{equation*}
as well as $Z_n^p = Z_{n+e}$. We conclude that
\begin{align*}
(K_n \cap zK_nz^{-1})^p & =
\prod_{\alpha \in \Phi_{\text{red}}^-} (\tilde{U}_{\alpha,n+e} \cap z\tilde{U}_{\alpha,n+e}z^{-1}) \times Z_{n+e} \times \prod_{\alpha \in \Phi_{\text{red}}^+} (\tilde{U}_{\alpha,n+e} \cap z\tilde{U}_{\alpha,n+e}z^{-1})\\
&=K_{n+e} \cap zK_{n+e}z^{-1}
\end{align*}
as desired.
\end{proof}

We can now prove our vanishing result for $R^i\underline{\Ind}(k)$.

\begin{proof} \emph{(Theorem \ref{thm:vanishing}.)}
Upon passing to the diagonal colimit $K=U$ when combining Proposition \ref{RInd} and Lemma \ref{lemma:Ind=lim} we see that
\begin{align*}
R^i\underline{\Ind}(k) & \simeq {\varinjlim}_K {\varinjlim}_U {\varprojlim}_{x\in G/U} H^i(K \cap xUx^{-1}, k) \\
& \simeq {\varinjlim}_U {\varprojlim}_{x\in G/U} H^i(U \cap xUx^{-1}, k) \\
& \simeq {\varinjlim}_{n\in e\N} {\varprojlim}_{x\in G/K_n} H^i(K_n \cap xK_nx^{-1}, k).
\end{align*}
Now it is obvious from Lemmas \ref{lem:res} and \ref{lem:ppower} that the transition maps
\begin{align*}
{\varprojlim}_{x\in G/K_n} H^i(K_n \cap xK_nx^{-1}, k) & \longrightarrow {\varprojlim}_{x'\in G/K_{n'}} H^i(K_{n'} \cap x'K_{n'}x'^{-1}, k) \\
c & \longmapsto \big( \res_{K_{n'} \cap x'K_{n'}x'^{-1}}^{K_n \cap x'K_nx'^{-1}}c_{x'K_n} \big)_{x' \in G/K_{n'}}
\end{align*}
are all zero for $i>0$ and $n'=n+e$. Therefore $R^i\underline{\Ind}(k)=0$ for $i>0$.
\end{proof}

Since the complex $R\underline{\Ind}(k)$ is concentrated in non-negative degrees, and has zero cohomology in positive degrees by \ref{thm:vanishing}, there is a quasi-isomorphism
\begin{equation}\label{f:Ind-RInd}
  \underline{\Ind}(k)[0] \overset{\text{qis}}{\longrightarrow}R\underline{\Ind}(k)
\end{equation}
for $p$-adic reductive groups $G$. Note that $\underline{\Ind}(k)$ is simply the space $\mathcal{C}^\infty(G,k)$ of smooth vectors in the $G \times G$-representation on the space of all $k$-valued functions on $G$.

\begin{corollary}
   The functors $R\underline{\Hom}(-,k)$ and $R\underline{\Hom}_{\Mod_k(G_r)}(-, \mathcal{C}^\infty(G,k))$ restricted to $D(G)^c$ are isomorphic.
\end{corollary}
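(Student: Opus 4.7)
The claim is a direct assembly of two ingredients already built up in the excerpt, so the plan is short.

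First, I would invoke Corollary \ref{coro:tau-compact}: the natural transformation
\[
\tau_{-} : R\underline{\Hom}(-,k) \longrightarrow R\underline{\Hom}_{\Mod_k(G_r)}(-, R\underline{\Ind}(k))
\]
restricts to a natural isomorphism on $D(G)^c$. This already gives the statement with $R\underline{\Ind}(k)$ in place of $\mathcal{C}^\infty(G,k)[0]$, and works for any locally profinite $G$ in the setup.

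Next, the reductive hypothesis enters only through Theorem \ref{thm:vanishing}. Combined with the observation that $\underline{\Ind}(k)=\mathcal{C}^\infty(G,k)$, it yields the quasi-isomorphism \eqref{f:Ind-RInd},
\[
\mathcal{C}^\infty(G,k)[0] \overset{\text{qis}}{\longrightarrow} R\underline{\Ind}(k),
\]
in $D(G\times G)$. Since $R\underline{\Hom}_{\Mod_k(G_r)}(V^\bullet,-)$ is a derived functor, it carries this quasi-isomorphism to an isomorphism in $D(G)$, functorially in $V^\bullet$; that is the content of part (ii) of \cite[Thm.~14.4.8]{KS} that has been used to set up this functor earlier.

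Composing the two natural isomorphisms gives, for every $V^\bullet$ in $D(G)^c$, a chain of isomorphisms
\[
R\underline{\Hom}(V^\bullet,k) \overset{\tau_{V^\bullet}}{\iso} R\underline{\Hom}_{\Mod_k(G_r)}(V^\bullet, R\underline{\Ind}(k)) \iso R\underline{\Hom}_{\Mod_k(G_r)}(V^\bullet, \mathcal{C}^\infty(G,k))
\]
natural in $V^\bullet$, which is precisely the asserted natural isomorphism of functors $D(G)^c \to D(G)$. There is no real obstacle: the only non-formal inputs, namely Corollary \ref{coro:tau-compact} and Theorem \ref{thm:vanishing}, are already established, and everything else is functoriality of the derived $\underline{\Hom}_{\Mod_k(G_r)}$ bifunctor.
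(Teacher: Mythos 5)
Your proposal is correct and matches the paper's own proof, which is stated tersely as ``Combine Corollary \ref{coro:tau-compact} with \eqref{f:Ind-RInd}''; you have simply spelled out the same two ingredients (the natural isomorphism $\tau_-$ on $D(G)^c$ and the quasi-isomorphism $\mathcal{C}^\infty(G,k)[0]\to R\underline{\Ind}(k)$ coming from Theorem \ref{thm:vanishing}) and the routine fact that the derived bifunctor sends quasi-isomorphisms in the second argument to isomorphisms.
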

\begin{proof}
Combine Corollary \ref{coro:tau-compact} with \eqref{f:Ind-RInd}.
\end{proof}



\bigskip

\noindent {\it{E-mail addresses}}: {\texttt{pschnei@uni-muenster.de}, {\texttt{csorensen@ucsd.edu}}

\bigskip

\noindent {\sc{Peter Schneider, Math. Institut, Universit\"a{}t M\"{u{nster, M\"{u}nster, Germany.}}

\bigskip

\noindent {\sc{Claus Sorensen, Dept. of Mathematics, UC San Diego, La Jolla, USA.}}

\end{document}